\UseRawInputEncoding
\documentclass[11pt]{amsart}
\usepackage[UKenglish]{isodate}
\usepackage{amsmath}
\usepackage{amsthm}
\usepackage{amsbsy}
\usepackage{amssymb}
\usepackage{amsfonts}
\usepackage[dvips]{lscape}
\usepackage{xcolor}
\usepackage{amscd}
\usepackage[all,cmtip]{xy}
\usepackage{euscript}
\usepackage{caption}  
\usepackage{parskip}
\usepackage{enumerate}
\usepackage{yfonts}
\usepackage{xcolor}
\usepackage{graphicx}
\usepackage{fancyhdr}
\usepackage{tikz-cd}
\usepackage{colortbl}
\usepackage{comment}
\usepackage{fullpage}
\usepackage{verbatim}
\usepackage{subcaption}

\usetikzlibrary{positioning}
\usetikzlibrary{decorations.pathreplacing}

\usepackage[margin=1in]{geometry}
\geometry{letterpaper}

\usepackage[expansion=false]{microtype}

\theoremstyle{plain}

\setlength{\parskip}{0pt}
\setlength{\parsep}{0pt}

\setlength{\parindent}{0in}

\theoremstyle{plain}
\newtheorem{theorem}{Theorem}[section]

\newtheorem{proposition}[theorem]{Proposition}
\newtheorem{lemma}[theorem]{Lemma}
\newtheorem{corollary}[theorem]{Corollary}

\newcommand{\res}{\text{Res}}
\newcommand{\dis}{\text{Disc}}
\theoremstyle{remark}

\theoremstyle{definition}
\newtheorem{definition}[theorem]{Definition}

\setcounter{tocdepth}{2}
\setlength{\parindent}{0in}

\newcommand{\diag}{\textup{diag}}

\renewcommand{\Im}{\textup{Im}}
\renewcommand{\Re}{\textup{Re}}

\renewcommand{\vec}[1]{\boldsymbol{#1}}
\newcommand{\vol}{\textup{vol}}

\newcommand{\PP}{\mathcal{P}}
\newif\iffinalrun
\iffinalrun
\else
 \fi

\iffinalrun
  \newcommand{\need}[1]{}
  \newcommand{\mar}[1]{}
\else
  \newcommand{\need}[1]{{\tiny *** #1}}
  \newcommand{\mar}[1]{\marginpar{\raggedright\tiny Fix Me:  #1 }}\fi

\usepackage{amssymb,amsmath,amsfonts,amsthm,epsfig,amscd,stmaryrd}
\usepackage{stmaryrd}

\usetikzlibrary{arrows}

\pgfdeclarelayer{background}
\pgfsetlayers{background,main}

\pgfmathsetmacro{\myxlow}{-2}
\pgfmathsetmacro{\myxhigh}{2}
\pgfmathsetmacro{\myiterations}{6}

\title{Limiting Distributions of Conjugate Algebraic Integers}
\author{Bryce Joseph  Orloski and Naser Talebizadeh Sardari}

\address{Penn State department of Mathematics, McAllister Building, Pollock Rd, State College, PA 16802 USA}
\email{nzt5208@psu.edu,bjo5149@psu.edu}
\thanks{We would like to thank Professors Pagano, Rumely, Sarnak, Serre, and Smith for their comments on the earlier version of this work. The authors also thank Pennsylvania State University's Institute for Computational and Data Sciences for allowing us to use their ROAR servers for our numerical experiments.}

\begin{document}
\begin{abstract} 
A recent advance by Smith~\cite{Smith} establishes a quantitative 
converse (conjectured by Smyth and Serre) to Fekete's 
celebrated theorem for compact subsets of $\mathbb{R}$.
Answering a basic question raised by Smith, we formulate and prove 
a quantitative converse of Fekete for general symmetric compact
subsets of $\mathbb{C}$. We highlight and exploit the algorithmic nature of 
our approach to give concrete applications to abelian varieties over finite fields
and to extremal problems in algebraic number theory.
\end{abstract}
\maketitle
\section{Introduction}
\subsection{Background}
This paper is motivated by the following question. What compact sets $\Sigma$ in the complex plane $\mathbb{C}$ contain infinitely many sets of conjugate algebraic integers, and how are they distributed in $\Sigma$? The ideas of this project originated in the works of many, including Schur~\cite{Schur}, Fekete~\cite{Fekete1923}, Siegel~\cite{MR12092}, Robinson~\cite{MR175881},  Smyth~\cite{MR736460}, Rumely~\cite{MR3154724}, Serre~\cite{MR4093205}, and Smith~\cite{Smith} on algebraic integers. We refer the reader to the excellent books of Baker and Rumely~\cite{MR2599526}, McKee and Smyth~\cite{McKee} and Serre~\cite{serre_curves} on this subject.
We introduce the basic concepts  and state our theorems which answer some open problems raised in previous works.
\newline

Let $\Sigma$ be a compact subset of the complex plane. Let 
\begin{equation}\label{transfn}
d_{\Sigma}(n):= \max_{z_1,\dots,z_n\in \Sigma}\prod_{i<j}|z_i-z_j|^{\frac{2}{n(n-1)}}.
\end{equation}
Fekete proved the following limit exists and called it the transfinite diameter of $\Sigma$: 
\begin{equation}\label{transf}
d_{\Sigma}:=\lim_{n\to \infty} d_{\Sigma}(n).
\end{equation}
For example, the transfinite diameter of a circle of radius $r$ is $r$. The transfinite diameter of $\Sigma$ equals the \textit{capacity} of $\Sigma$~\cite[Chapter 2]{MR2730573}. 
Fekete~\cite{Fekete1923}, generalizing some results of Schur~\cite{Schur}, proved that if $d_\Sigma<1$, then there is only a finite number of irreducible monic integral polynomials such that all of their roots lie in $\Sigma$.  Very recently, Kollar and Sarnak~\cite{Sarnak}
applied this result of Fekete together with combinatorial arguments to give
general lower bounds for the size and shape of the spectrum of three regular graphs. Remarkably, these
bounds are sharp. 
\\

Note that Fekete's condition $d_{\Sigma}<1$ is optimal. For example, let $\Sigma$ be the unit circle. Then primitive roots of unity give an infinite number of irreducible integral polynomials such that their roots lie all in $\Sigma$. The strict converse is not true. For example, the circle of radius $r>1$ around the origin where $r$ is not an algebraic integer gives a counter-example. However, Fekete and  Szeg\"o \cite{MR72941} proved that if $\Sigma$ is symmetric about the real axis and $d_{\Sigma}\geq 1$, then any open set $D$ including $\Sigma$ will contain infinitely many sets of conjugate algebraic integers. Robinson~\cite{Robinson} proved an analogous theorem about real point sets using the properties of Chebyshev's polynomials of $\Sigma$. 
\\

Smyth showed that if $\mu$ is the limit of uniform probability measures attached to distinct conjugate algebraic integers 
all of whose conjugates lie in a compact set $\Sigma$ of the complex plane, then 
\[
 \int \log|Q(x)|d\mu(x)\geq 0
\]
for every $Q(x)\in \mathbb{Z}[x]$.
In a recent breakthrough, Smith~\cite{Smith} proved that these necessary conditions are sufficient for $\mu$ to be a limiting measure  when $\Sigma$ is 
a subset of $\mathbb{R}$ satisfying some conditions.  In this paper, we establish this result for essentially any compact symmetric subset of the complex plane $\Sigma$ and our new method 
gives a different treatment of Smith's Theorem. In particular, we introduce some new constraints on the limiting measures of conjugate algebraic integers.  For a limiting  measure $\mu$, we prove in Theorem~\ref{mdim} that
\[
\int \log(|Q(x_1,\dots,x_n)|)d\mu(x_1)\dots d\mu(x_n)\geq 0
\]
for every $Q(x_1,\dots,x_n)\in \mathbb{Z}[x_1,\dots,x_n]$. In an upcoming work, we formulate a convex optimization problem based on the above new constraints and improve the lower bound on the Schur-Siegel-Smyth trace problem to the new best result 1.7982. Moreover, Theorem~\ref{mdim} allows us to improve some results of Smith~\cite[Proposition 3.5]{Smith} stated in Proposition~\ref{lowerd}.  This improvement is essential  in counting algebraic integers with a prescribed distribution ~\cite{2023arXiv230410021T}. 
\\

 Smith's method is based on the earlier works of Hilbert~\cite{MR1554854} that used the geometry of numbers and Robinson~\cite{Robinson}.  As mentioned earlier, Robinson used the properties of Chebyshev's polynomials of $\Sigma\subset \mathbb{R}$. Smith used some asymptotics of Chebyshev's polynomials proved in~\cite{Barry} that are not known for compact subsets of the complex plane. Our main technical invention in this paper is the new proof of  Robinson's result and Smith's results without using the properties of Chebyshev's polynomials; see Proposition~\ref{chpolreal}. 
 Our method is based on the  probabilistic sampling of roots with respect to the equilibrium measure and then deforming the roots  with a greedy algorithm along a gradient vector; see subsection~\ref{chebsec} and Proposition~\ref{chpolreal}. 
\\

In addition to the generalization of Smith's result to the complex plane, we also prove and implement a polynomial time algorithm which given an oracle to a probability measure $\mu$ (satisfying some technical conditions) on rectangles in the complex plane and a positive integer $n$, it outputs a degree $n$ polynomial with root distribution modelling $\mu$. Our algorithm and its implementation have no prior analogue to the best of our knowledge. Our numerical results show some new features of the algebraic integers that we discuss further in subsection~\ref{complex}.

 \subsection{Main results}\label{result}

 \subsubsection{Arithmetic probability measures}

Suppose that $\Sigma\subset \mathbb{C}$ is compact and symmetric about the real axis with $d_{\Sigma}\geq 1.$ Let $\mathcal{P}_\Sigma$ be the space of probability measures supported on $\Sigma$ equipped with the weak* topology. Let  
\[
\Sigma(\rho):=\{z\in \mathbb{C}: |z-\sigma|<\rho \text{ for some }\sigma\in\Sigma \}.
\]
If $\Sigma\subset \mathbb{R}$, let
\[
\Sigma_{\mathbb{R}}(\rho):=\{x\in \mathbb{R}: |x-\sigma|<\rho \text{ for some }\sigma\in\Sigma \}.
\]
Note that $\Sigma_{\mathbb{R}}(\rho)\subset \mathbb{R}$ and is only defined for $\Sigma\subset \mathbb{R}.$
\begin{definition}
  A measure $\mu \in \mathcal{P}_\Sigma$ is called a \textit{(real) arithmetic probability measure} if  $\mu$ is the weak* limit of a sequence of distinct uniform probability measures on a complete set of conjugate (totally real) algebraic integers  lying eventually inside ($\Sigma_{\mathbb{R}}(\varepsilon)$ for every $\varepsilon > 0$) $\Sigma(\varepsilon)$ for every $\varepsilon > 0$.
The set of all arithmetic probability measures is denoted (by $\mathcal{A}_{\Sigma_{\mathbb{R}}}\subset \mathcal{P}_\Sigma$) by $\mathcal{A}_{\Sigma}\subset \mathcal{P}_\Sigma.$ 
\end{definition}

Next, we define a convex subset of $\mathcal{P}_\Sigma$ that includes $\mathcal{A}_{\Sigma}.$
Let $P(x)$ be a polynomial with complex coefficients of degree $n$. Define its associated root probability measure on the complex plane to be
\[
\mu_P:=\frac{1}{n} \sum_{i=1}^{n} \delta_{\alpha_i},
\]
where $\alpha_i$ for $1 \leq i\leq n$ are the roots of $P$ and $\delta_{\alpha}$ is the delta probability measure at $\alpha.$ Suppose that $\mu\in \mathcal{A}_{\Sigma}.$
Then $\mu_{P_n}\stackrel{\ast}{\rightharpoonup} \mu$ as $n\to\infty$ for some sequence $\{P_n \}$ of distinct irreducible monic polynomials with integral coefficients.  Since $P_n$ has real coefficients, its roots are symmetric about the real axis. Therefore, $\mu$ should also be symmetric about the real axis. Let $Q$ be a polynomial with integral coefficients. Note that if $P_n \nmid Q,$
\[
\int \log |Q(x)| d\mu_{P_n}(x)= \frac{\log|\res(Q,P_n)|}{\deg(P_n)} \geq 0,
\]
 where $\res(Q,P_n) \in \mathbb{Z}$ is the resultant of $P_n$ and $Q.$ By taking $n\to \infty,$ Serre~\cite{MR4093205} proved that 
 \begin{equation}\label{conds}
 \int \log |Q(x)| d\mu(x) \geq 0      
 \end{equation}
 for every non-zero $Q(x)\in \mathbb{Z}[x].$  Let $\mathcal{B}_{\Sigma}\subset \mathcal{P}_{\Sigma}$ be the set of all $\mu\in  \mathcal{P}_{\Sigma}$ that are symmetric about the real axis and satisfy~\eqref{conds} for every non-zero $Q(x)\in \mathbb{Z}[x].$ Since $d_{\Sigma}\geq 1,$ it follows that the equilibrium measure of $\Sigma$ belongs to $\mathcal{B}_{\Sigma}$. 
  Note that $\mathcal{B}_{\Sigma}$ is a convex subset of $\mathcal{P}_{\Sigma}$ and  $\mathcal{A}_{\Sigma}\subset \mathcal{B}_{\Sigma}.$ Serre~\cite{MR2428512} proved that there exists  $\Sigma \subset \mathbb{R}^+$ and $\mu \in \mathcal{B}_{\Sigma}$ such that 
 $\int_{\mathbb{R}} x d\mu<1.8984.$ Smith~\cite{Smith} proved that if $\Sigma\subset \mathbb{R}$ satisfies certain technical conditions (including Serre's construction) then $\mathcal{A}_{\Sigma_{\mathbb{R}}}= \mathcal{B}_{\Sigma}.$  This gave a definite answer to the Schur--Siegel--Smyth Trace Problem. Finding $\mu \in \mathcal{B}_{\Sigma}$ with the minimal  expected value $\int_{\mathbb{R}} x d\mu$ is a linear programming problem
 and finding its optimal solution is an open problem; see~\cite{Smith}. It is the analog of finding the magic function in the linear programming problem formulated by Cohn-Elkies for the sphere packing problem.

\begin{theorem}\label{general}
Suppose that $\Sigma\subset \mathbb{C}$ is compact and symmetric about the real axis. If $d_{\Sigma}<1$, then $\mathcal{A}_{\Sigma}= \mathcal{B}_{\Sigma}=\emptyset.$ Otherwise, $d_{\Sigma}\geq 1$ and 
\(\mathcal{A}_{\Sigma}= \mathcal{B}_{\Sigma}\neq \emptyset.\) Moreover, if $\Sigma\subset \mathbb{R}$ is compact then \(\mathcal{A}_{\Sigma}= \mathcal{B}_{\Sigma}=\mathcal{A}_{\Sigma_{\mathbb{R}}}.\)
\end{theorem}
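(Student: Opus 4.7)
The plan is to treat the three assertions of the theorem in turn. For the first, suppose $d_\Sigma<1$. By Fekete's theorem in its integer Chebyshev form, there exists a monic $Q\in\mathbb{Z}[x]$ of some degree $d$ with $\sup_{z\in\Sigma}|Q(z)|<1$; hence $\int\log|Q|\,d\mu<0$ for every $\mu\in\mathcal{P}_\Sigma$, violating the defining inequality \eqref{conds}. This shows $\mathcal{B}_\Sigma=\emptyset$, and then $\mathcal{A}_\Sigma=\emptyset$ by the inclusion $\mathcal{A}_\Sigma\subseteq\mathcal{B}_\Sigma$ already recorded in the introduction. When $d_\Sigma\geq 1$, non-emptiness of $\mathcal{B}_\Sigma$ follows from the equilibrium measure of $\Sigma$ (noted in the excerpt to lie in $\mathcal{B}_\Sigma$), and the inclusion $\mathcal{A}_\Sigma\subseteq\mathcal{B}_\Sigma$ is also already in hand from Serre's resultant argument.

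The substantive content is therefore the reverse inclusion $\mathcal{B}_\Sigma\subseteq\mathcal{A}_\Sigma$ when $d_\Sigma\geq 1$. Given $\mu\in\mathcal{B}_\Sigma$ and $\varepsilon>0$, I would construct, for every sufficiently large $n$, a monic irreducible $P_n\in\mathbb{Z}[x]$ of degree $n$ whose roots all lie in $\Sigma(\varepsilon)$ and whose root measure $\mu_{P_n}$ approximates $\mu$ to within $1/n$ in some weak-$\ast$ metric; letting $n\to\infty$ and extracting a subsequence of distinct polynomials then yields $\mu\in\mathcal{A}_\Sigma$. The construction has two phases. In \emph{probabilistic sampling}, one draws $n$ points $\alpha_1,\ldots,\alpha_n$ from a discretisation of $\mu$ (respecting complex-conjugate symmetry) so that the candidate polynomial $\widetilde P(x)=\prod_i(x-\alpha_i)$ has a coefficient vector close to the integer lattice $\mathbb{Z}^n$ with high probability. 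In \emph{greedy deformation}, one successively perturbs these roots along gradient directions of the coefficient-vector map to move the coefficients onto $\mathbb{Z}^n$ exactly. The existence of enough independent deformation directions, together with the quantitative bound that lets the perturbed roots stay in $\Sigma(\varepsilon)$, is supplied by the new multidimensional estimate of Theorem~\ref{mdim}, applied in tandem with the family of inequalities \eqref{conds} satisfied by $\mu\in\mathcal{B}_\Sigma$.

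The main obstacle is executing the deformation so that simultaneously (a) all roots remain inside $\Sigma(\varepsilon)$, (b) the empirical distribution stays weak-$\ast$ close to $\mu$, and (c) the resulting polynomial is irreducible; obstruction to (a) and (b) is precisely what \eqref{conds} rules out, while (c) can be secured by a small generic perturbation or a mod-$p$ argument on $P_n$ that does not disturb the previous estimates. For the final assertion that $\mathcal{A}_\Sigma=\mathcal{A}_{\Sigma_\mathbb{R}}$ when $\Sigma\subset\mathbb{R}$, one inclusion is immediate; for the other, I would rerun the same sampling-and-deformation scheme on the real line, using that any $\mu\in\mathcal{B}_\Sigma$ is automatically supported in $\mathbb{R}$ and that the gradient flow can be constrained to preserve reality, so that the resulting roots are totally real and lie in $\Sigma_\mathbb{R}(\varepsilon)$.
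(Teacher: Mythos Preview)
Your treatment of the case $d_\Sigma<1$ is essentially correct and in fact slightly more elementary than the paper's: the paper instead invokes Theorem~\ref{mdim} with the two-variable polynomial $Q(x,y)=x-y$, deducing $I(\mu)\geq 0$ for every $\mu\in\mathcal{B}_\Sigma$ and reaching a contradiction with $\sup_{\mu\in\mathcal{P}_\Sigma}I(\mu)=\log d_\Sigma<0$. (Your polynomial $Q$ need not be monic for the argument to go through; drop that word.)

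For the reverse inclusion $\mathcal{B}_\Sigma\subseteq\mathcal{A}_\Sigma$, however, your plan misidentifies both the mechanism and the role of Theorem~\ref{mdim}. The paper does \emph{not} deform the sampled roots of $\widetilde P$ to force all $n$ coefficients into $\mathbb{Z}$; trying to do so would require solving $n$ coupled nonlinear equations with the roots constrained to $\Sigma(\varepsilon)$, and nothing in your outline explains why this is feasible. Instead the paper works in \emph{coefficient space}: it introduces the convex body $K_n=\{p:\ |p(z)|\leq e^{nU_\mu(z)}\ \forall z\}$, and the heart of the argument is showing that the successive minima of the integer lattice relative to $K_n$ are all sub-exponential (Proposition~\ref{upperbound}). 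This is where Theorem~\ref{mdim} actually enters: applied to the Vandermonde-type determinant $\det[P_i(x_j)]$, it gives a \emph{lower} bound on $\prod_{i<m}\lambda_i$ (Proposition~\ref{lowerd}), which combined with Minkowski's second theorem and the volume bound on $K_n$ yields the upper bound on $\lambda_n$. The final polynomial is built as $p_{M,n-m,\mu}\cdot Q_m$ plus a small integral correction written in the Minkowski basis; the top $m=n^{1-\delta_0}$ coefficients are made even via an auxiliary construction (Propositions~\ref{chpol}/\ref{chpolreal}), irreducibility comes from Eisenstein at $2$, and root location from Rouch\'e (or sign changes in the real case). The ``greedy root deformation'' you describe does appear in the paper, but only inside the proof of Proposition~\ref{chpolreal}, where it handles the \emph{equilibrium} measure and just $m\ll n$ coefficients.

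You are also missing two reductions without which the general statement cannot be reached from the construction above. First, the Minkowski-theoretic argument requires $\mu$ to be H\"older; the paper reduces to this via convolution with circle measures (Proposition~\ref{holderreduction}). Second, the convergence $\|h_n\|_n^{1/n}\to 1\Rightarrow \mu_{h_n}\stackrel{\ast}{\rightharpoonup}\mu$ needs $\Sigma$ to have empty interior and connected complement; the paper manufactures such a $\Sigma$ by replacing $\mu$ with a nearby measure supported on a union of short segments plus a scaled equilibrium piece (Theorem~\ref{no_interior_bound}), and then runs a diagonal argument. Neither step is routine, and without them your scheme does not cover, e.g., $\Sigma$ equal to the unit circle or a closed disc.
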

Next, we introduce some notation and state a corollary of our result that generalizes the main theorem of Smith~\cite{Smith} to the complex plane. 
Let 
\[
[a,b]\times[c,d]:=\left\{z\in \mathbb{C}: \Re(z)\in [a,b] \text{ and } \Im(z)\in [c,d]\right\}\subset \mathbb{C}.
\]
We call $[a,b]\times[c,d]$ a rectangle, and a real  interval if $c=d=0$.  
A measure $\mu$ is a H\"older probability measure if there exists $\delta>0$ and $A>0$ such that
\[
\mu([a,b]\times[c,d]) \leq A\max(|b-a|, |c-d|)^{\delta}
\]
for every $a<b, c<d.$
Suppose that $\Sigma \subset \mathbb{R}$ is a finite union of closed intervals with $d_{\Sigma}>1$ and $\mu \in \mathcal{B}_{\Sigma}$ is H\"older. 
Smith's main theorem~\cite[Theorem 1.6]{Smith} is equivalent to~\cite[Proposition 2.4]{Smith}, the existence of a sequence of  conjugate algebraic integers lying inside $\Sigma$ and equidistributing to $\mu.$ As we discussed, let $C_r\subset \mathbb{C}$ be  the circle of radius $r>1$ around the origin where $r$ is not an algebraic integer, and $\mu_r\in B_{C_r}$ be the uniform measure on $C_r$. Then $\mu_r$ gives a counter-example~\cite{MR72941} to Smith's theorem over $\mathbb{C}$. Knowing this, Smith raised the question of extending his result to $\Sigma\subset \mathbb{C}.$ We state a corollary of our main theorem that recovers Smith's theorem and generalizes it to the complex plane. 
\begin{corollary}\label{gsmith}
    Suppose that $\Sigma \subset \mathbb{C}$ is compact and symmetric about the real axis and is a finite union of disjoint rectangles and real intervals with $d_{\Sigma}>1$. If $\mu \in \mathcal{B}_{\Sigma}$, then there exists a sequence of  conjugate algebraic integers lying inside $\Sigma$ and equidistributing to $\mu.$ 
\end{corollary}
Smith's method does not directly imply  Corollary~\ref{gsmith}. For example, his method requires the complement of $\Sigma\subset \mathbb{C}$ to be connected. This condition is trivially satisfied for compact $\Sigma\subset \mathbb{R},$ but fails for some $\Sigma\subset \mathbb{C}$.
We introduce new ideas to prove the above theorem. This is discussed in Section~\ref{method}. 
\\

Next, we introduce some new notation to state a quantitative version of the  Theorem~\ref{general} under some assumptions. Let \begin{equation}\label{logpot}
    U_{\mu}(z):= \int \log|z-w| d\mu(w),
\end{equation}
which is called the logarithmic potential of $\mu$. In other works, this definition of the potential is replaced with $\int -\log|z-w|d\mu(w)$.
\newline

  We cite the following definition from Smith~\cite[Definition 2.7]{Smith}.
  \begin{definition}
      Fix a H\"older measure $\mu$ with support contained in the compact subset $\Sigma\subset \mathbb{C}$. Given a complex polynomial $P$ of degree less than or equal to $n$, define the $n$-norm of $P$ with respect to $(\mu,\Sigma)$ by
      \[
      \|P\|_n:=\max_{z\in \mathbb{C}}\left(e^{-nU_{\mu}(z)}|P(z)|  \right).
      \]
It follows from~\cite[Theorem III.4.2]{logpotentials} that if  $\Sigma\subset \mathbb{C}$ is compact and  has empty interior with connected complement and for a sequence of monic polynomials $P_n$ of degree $n$ for $n\geq 1$
\[
\limsup_n \|P_n\|_n^{1/n} \leq 1, 
\]
then the roots of $P_n$ are equidistibuted to $\mu$.

  \end{definition}
\begin{theorem}\label{main1}
Suppose $\Sigma\subset \mathbb{C}$ is compact and  has empty interior with connected complement.  Suppose that $\mu \in \mathcal{B}_{\Sigma}$ is a H\"older measure with exponent $\delta$ and $D$ is any open set containing $\Sigma$. For every large enough integer $n\geq 1$, there exists a monic irreducible polynomial $h_n$ of degree n such that all roots of $h_n$ are inside $D$ and 
\[
\|h_n\|_n\leq e^{n^{1-\delta'}}
\]
for some $\delta'>0$ which only depends on the H\"older exponent of $\mu$. As a result, the roots of $h_n$ are equidistibuted to $\mu.$ Moreover, if $\Sigma\subset \mathbb{R}$ is compact then it is possible to take any open set $D\subset \mathbb{R}$  in the real point set topology containing $\Sigma$  and prove the same result. 
\end{theorem}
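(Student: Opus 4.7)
The plan is to produce $h_n$ in three stages: first sample a complex auxiliary polynomial $\tilde P_n$ with roots in $D$ and small $n$-norm; second snap its coefficients to a nearby integer lattice point using the hypothesis $\mu\in \mathcal{B}_\Sigma$; third arrange that this lattice point is irreducible and still has all its roots in $D$. This follows the heuristic advertised in the introduction: probabilistic sampling with respect to $\mu$ followed by a greedy deformation to an integer polynomial along gradient directions.

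For the first stage, the H\"older regularity of $\mu$ will allow me to place $n$ points $\alpha_1,\dots,\alpha_n$ inside $\Sigma$ (for instance via a dyadic or Fekete-type partitioning whose box counts are read off $\mu$) so that the empirical measure $\tfrac{1}{n}\sum_i\delta_{\alpha_i}$ has logarithmic discrepancy from $\mu$ of order $n^{-\eta}$ for some $\eta>0$ depending only on $\delta$. Setting $\tilde P_n(z)=\prod_i(z-\alpha_i)$, standard potential-theoretic estimates then give
\[
\log |\tilde P_n(z)| \;=\; n U_\mu(z)+O(n^{1-\eta}),
\]
uniformly in $z$ in compact sets, which yields $\|\tilde P_n\|_n \le e^{O(n^{1-\eta})}$.

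For the second stage, monic degree-$n$ integer polynomials form a translate of the lattice $\mathbb{Z}^n$ inside the space of monic degree-$n$ polynomials. The hypothesis $\mu\in \mathcal{B}_\Sigma$, namely $\int\log|Q|\,d\mu\ge 0$ for every nonzero $Q\in\mathbb{Z}[x]$, is precisely the dual statement that the $n$-norm has no exponentially short nonzero integer vectors; this bounds the successive minima of the lattice from below. Combined with Theorem~\ref{mdim} (the quantitative geometry-of-numbers input which the authors emphasize strengthens Smith's Proposition 3.5), this yields a monic integer polynomial $h_n$ whose coefficient vector is within a bounded distance of that of $\tilde P_n$ and whose $n$-norm satisfies $\|h_n\|_n\le e^{O(n^{1-\delta'})}$ for some $\delta'>0$ determined by $\delta$ and $\eta$. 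For the third stage, since $\Sigma$ has empty interior and connected complement, one can choose an open $U$ with $\Sigma\subset U\Subset D$ whose boundary is disjoint from $\Sigma$; on $\partial U$ the quantity $|\tilde P_n(z)|$ is exponentially large relative to $\|h_n-\tilde P_n\|$, so Rouch\'e's theorem forces all $n$ roots of $h_n$ into $U\subset D$. Irreducibility is arranged by imposing an Eisenstein-type congruence on a single coefficient within the admissible lattice ball, which is large enough in Step (ii) to accommodate the extra constraint without violating the $n$-norm bound.

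The main obstacle is Step (ii): translating the qualitative inequality $\int\log|Q|\,d\mu\ge 0$ into an effective, uniform-in-$n$ lower bound on $\|Q\|_n$ for integer $Q$, and then using it to find a short integer representative of $\tilde P_n$ in the $n$-norm. This is exactly the role of Theorem~\ref{mdim}; the final exponent $\delta'$ emerges from balancing the sampling discrepancy in Step (i) against the lattice approximation error in Step (ii), both of which are controlled by the H\"older exponent $\delta$. For the real case $\Sigma\subset\mathbb{R}$ with $D\subset\mathbb{R}$ open, one samples symmetrically from the real line, restricts the lattice search to polynomials with real coefficients, and uses a real-Rouch\'e/interlacing argument inside $\Sigma_\mathbb{R}(\varepsilon)$ to guarantee that $h_n$ has $n$ real roots in $D$.
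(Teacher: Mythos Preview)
Your three-stage outline matches the paper's architecture, but Stage~(iii) as you describe it would not go through, and the fix is precisely the technical device the paper inserts between Stages~(i) and~(ii).

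The Rouch\'e comparison you propose is not valid. On $\partial U$ you only know $\log|\tilde P_n(z)| = nU_\mu(z)+O(n^{1-\eta})$, a \emph{two-sided} estimate: $|\tilde P_n(z)|$ may well be as small as $e^{nU_\mu(z)-cn^{1-\eta}}$. Meanwhile the rounding error $h_n-\tilde P_n$, being an integer combination of the basis polynomials realizing the successive minima, satisfies $|h_n(z)-\tilde P_n(z)|\le (\text{const})\cdot\lambda_n\,e^{nU_\mu(z)}$ with $\lambda_n\le n^{cn^{1-\delta/12}}$ (Proposition~\ref{upperbound}). Both sides are thus $e^{nU_\mu(z)}$ times a subexponential factor, with no a priori ordering. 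The paper resolves this by \emph{not} sampling in degree~$n$: it samples $p_{M,n-m,\mu}$ in degree $n-m$ with $m=n^{1-\delta_0}$, then multiplies by an auxiliary $Q_m$ from Proposition~\ref{chpol} satisfying $|Q_m(z)|\ge\kappa^m$ for $z\notin\Sigma(\rho)$ with $\kappa>d_\Sigma\ge 1$. This injects a genuine factor $\kappa^{m}=e^{(\log\kappa)n^{1-\delta_0}}$ on the main term, which, for $\delta_0<\delta/12$, dominates both the sampling error and the $\lambda_{n-m}$ from rounding. Without this Fekete--Szeg\H{o} style multiplier, Rouch\'e fails.

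The same multiplier is what makes Eisenstein work. You write that irreducibility comes from ``imposing an Eisenstein-type congruence on a single coefficient''; in fact Eisenstein at~$2$ requires \emph{every} non-leading coefficient to be even. The construction of $Q_m$ in Proposition~\ref{chpol} is engineered precisely so that the top $m$ coefficients of $p_{M,n-m,\mu}\,Q_m$ are already even; the remaining $n-m$ coefficients are then rounded in the basis $\{P_i\}$ of Minkowski minima to lie in $4\mathbb{Z}$ (with constant term $\equiv 2\pmod 4$). So $Q_m$ is doing double duty, and your proposal omits it entirely. Finally, Theorem~\ref{mdim} enters only indirectly: it yields the lower bound on $\prod_{i<m}\lambda_i$ (Proposition~\ref{lowerd}), which combined with Minkowski's second theorem and the volume estimate gives the crucial upper bound on $\lambda_n$ (Proposition~\ref{upperbound}); it is this upper bound, not a lower bound on $\|Q\|_n$, that permits the rounding step.
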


\subsubsection{Complexity of  approximating arithmetic probability measures}\label{complex}
The next goal is to study the complexity of finding a sequence of distinct irreducible monic polynomials $\{P_n\}$
 such that $\{ \mu_{P_n} \}$ converges in the weak$^\ast$ topology to a given  $\mu \in \mathcal{B}_{\Sigma}.$

\begin{theorem}\label{main2}
Let $\Sigma$ be as in Theorem \ref{main1}.
Suppose that $\mu \in \mathcal{B}_{\Sigma}$ is a H\"older probability measure and $D$ is any open set containing $\Sigma$. There is a polynomial time algorithm in $n$ that returns an integral, irreducible, monic polynomial $g_n$ of degree $n$ such that all roots of $P_n$ are inside $D$, and 
\[
\|g_n\|_n\leq e^{\frac{Cn\log(\log(n))^3}{\log(n)}}
\]
where $C > 0$ depends only on $\mu$ and is independent of $n$. As a result, the roots of $g_n$ are equidistibuted to $\mu.$ Moreover, if $\Sigma\subset \mathbb{R}$ is compact then it is possible to take any open set $D\subset \mathbb{R}$  in the real point set topology containing $\Sigma$  and prove the same result.  
\end{theorem}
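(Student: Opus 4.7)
The plan is to convert the existence argument behind Theorem~\ref{main1} into an explicit polynomial-time procedure, at the cost of the weaker norm bound $e^{Cn(\log\log n)^3/\log n}$ in place of $e^{n^{1-\delta'}}$. As indicated in the introduction, the proof of Theorem~\ref{main1} rests on two ingredients: a probabilistic initial sampling of $n$ roots according to the weighted measure $\mu$, followed by a greedy deformation of those roots along a gradient vector that drives the coefficients of $P_n(x)=\prod_i(x-\alpha_i)$ into $\mathbb{Z}$. To produce an algorithm I would derandomize both stages.

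First I would replace $\mu$ by a deterministic discrete approximation $\mu^{(N)}$ on an $N$-point grid inside $\Sigma$, where $N$ is polynomial in $n$; the H\"older assumption on $\mu$ gives quantitative control on $|U_{\mu^{(N)}}-U_\mu|$ both on $\Sigma$ and at positive distance from it. Then I would select $n$ roots $\alpha_1,\ldots,\alpha_n$ from this grid by a greedy Leja-type rule, at each step maximizing $|P_k(\alpha_{k+1})|\,e^{-nU_\mu(\alpha_{k+1})}$ against a quota constraint enforcing that the empirical root distribution tracks $\mu^{(N)}$. Each greedy step costs $O(N)$ evaluations, and the resulting monic polynomial already enjoys a near-Chebyshev weighted norm by the standard Leja--Fekete estimate.

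The integer-rounding stage proceeds coefficient by coefficient. Writing $c_k(\alpha)$ for the $k$-th elementary symmetric function, I would walk through $k=n-1,n-2,\ldots,0$ in an order compatible with the pairing $\alpha_i\leftrightarrow\bar\alpha_i$, and at each step displace the roots by a minimum-norm vector $\Delta\alpha$ that is orthogonal to the gradients of the coefficients already rounded and that snaps $c_k$ to its nearest integer. The displacement is constrained so that every $\alpha_i$ remains in $D$, which is feasible because $\Sigma$ lies strictly interior to $D$. A polynomial-time irreducibility check via LLL, followed by a $\pm 1$ perturbation of a low-order coefficient if reducibility occurs, completes the construction; irreducibility-preserving perturbations are dense enough that only $O(\log n)$ such adjustments are required.

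The main obstacle is bookkeeping $\|g_n\|_n$ across the $n$ rounding steps. Each displacement perturbs the weighted polynomial by a factor $\exp(O(n\|\Delta\alpha\|))$, and the size of $\Delta\alpha$ grows as the Jacobian of the already-matched coefficients becomes ill-conditioned. Balancing the grid resolution against the condition number of these Jacobians forces a precision of order $\log n/(\log\log n)^3$ per coordinate, which is what produces the stated factor $Cn(\log\log n)^3/\log n$ in the exponent. Quantifying this balance is precisely where Theorem~\ref{mdim} and the H\"older regularity of $\mu$ enter, and it is the bridge from the existence result of Theorem~\ref{main1} to a polynomial-time algorithm.
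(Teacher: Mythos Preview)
Your proposal does not match the paper's argument and, more importantly, does not work as written.

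The paper's algorithm is a lattice-reduction procedure, not a root-displacement one. It forms the convex body $K_n$ of real polynomials $p$ with $|p(z)|\le e^{nU_\mu(z)}$, runs Schnorr's block reduction (with block size $k=\log n/\log\log n$) on the standard integer lattice $\mathbb{Z}[x]_{\le n}$ against this norm to produce an integral basis inside $e^{Cn(\log\log n)^3/\log n}K_n$ (Corollary~\ref{find_lattice_basis} together with Theorem~\ref{factors}), and then rounds the real polynomial $p_{M,n-m,\mu}Q_m$ coefficient-wise in this basis (Theorem~\ref{close_poly}). The factor $e^{Cn(\log\log n)^3/\log n}$ is exactly the approximation ratio of Schnorr's algorithm at that block size; it has nothing to do with Jacobian conditioning or grid precision. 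Irreducibility is guaranteed structurally via Eisenstein at the prime $2$: the top $m$ coefficients are even by the construction of $Q_m$ (Proposition~\ref{chpol}/\ref{chpolreal}), the remaining ones are multiplied by $4$, and the constant term is shifted by $-2$. Roots are confined to $D$ by Rouch\'e (or sign changes in the real case), comparing $p_{M,n-m,\mu}Q_m$ with the rounded polynomial.

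Your scheme has genuine gaps. The coefficient-by-coefficient rounding via minimum-norm root displacements orthogonal to previously fixed gradients is exactly the step whose failure motivates the lattice approach: after $k$ coefficients are frozen, the admissible displacement directions are controlled by a $k\times n$ Vandermonde-like system whose inverse blows up super-polynomially, and you offer no mechanism to keep $\|\Delta\alpha\|$ small enough that the accumulated factor stays sub-exponential. Your assertion that the balance ``forces a precision of order $\log n/(\log\log n)^3$ per coordinate'' is unsupported and appears reverse-engineered from the target bound; you do not explain how Theorem~\ref{mdim} would control condition numbers of these Jacobians (in the paper it is used only to lower-bound products of successive minima). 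Finally, an LLL irreducibility test followed by $\pm 1$ perturbations does not come with any guarantee that $O(\log n)$ adjustments suffice, and any such perturbation could destroy the root-confinement to $D$.
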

We implemented a version of this algorithm for arithmetic probability measures $\mu\in \mathcal{B}_{\Sigma}$ constructed by Serre~\cite{MR2428512}. Explicitly, let $\Sigma=[a,b],$
\[
\mu=c\mu_{[a,b]}+(1-c)\nu_{[a,b]},
\]
 $a=0.1715,$ $ b=5.8255,$  $c=0.5004,$ $\mu_{[a,b]}$ is the equilibrium measure on $[a,b]$, and $\nu_{[a,b]}$ is the pushforward of the equilibrium measure 
on $[b^{-1},a^{-1}]$ under the map $z\to 1/z.$
 Figure~\ref{fig} shows the density of $\mu$, and Figure~\ref{fig2} shows a histogram of the roots of the output polynomial of our algorithm for $n=100$.

%
\begin{figure}
     \centering
     \begin{subfigure}[b]{0.4\textwidth}
         \centering
         \includegraphics[width=\textwidth]{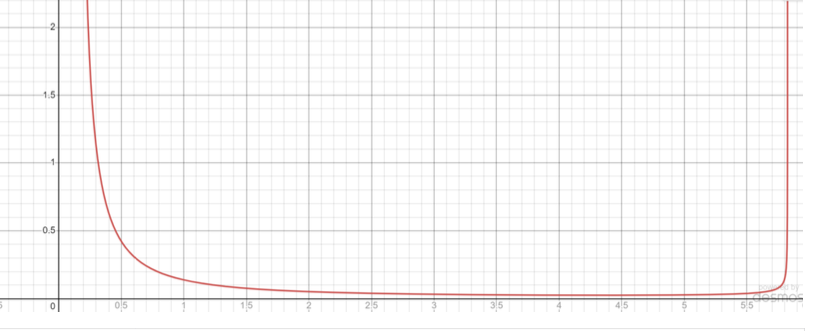}
         \caption{Density function of $\mu$}
         \label{fig}
     \end{subfigure}
     \hfill
     \begin{subfigure}[b]{0.4\textwidth}
         \centering         \includegraphics[width=\textwidth]{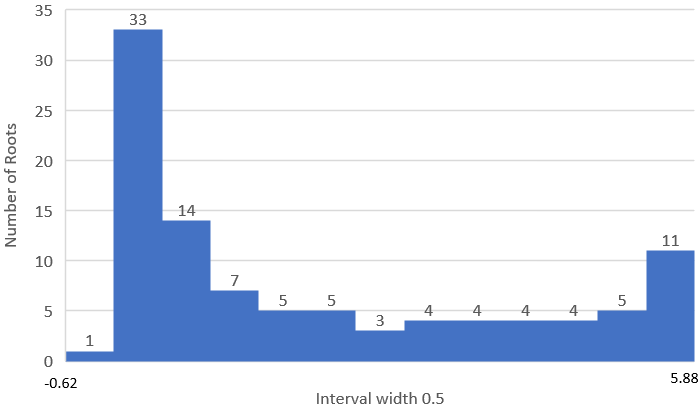}
                \caption{Number of roots of $P_{100}(x)$}
                \label{fig2}
     \end{subfigure}
\caption{This compares the density function of $\mu$ with the histogram of roots of our algorithm's output with input $n=100$.}\label{interval}
\end{figure}

\subsection{Applications to abelian varieties over finite fields} Recently, Shankar and  Tsimerman~\cite{shankar_tsimerman_2018}, \cite{Tsimerman1} conjectured that for every $g\geq 4$, every Abelian variety defined over $\overline{\mathbb{F}_q}$ is isogenous to the Jacobian of a curve defined over $\overline{\mathbb{F}_q}.$ By Honda Tate theory, the isogeny class of simple abelian varieties over $\mathbb{F}_q$ and isogenies over $\mathbb{F}_q$ corresponds  to algebraic integers such that all their conjugates have norm $\sqrt{q}.$ We apply our main theorem and Honda Tate theory to prove the following.
\begin{theorem}
    Given any finite field $\mathbb{F}_q$ and any integer $n$, our algorithm returns infinitely many isogeny classes of simple abelian varieties over $\mathbb{F}_q$ which are not isogenous to the Jacobian of any curve over $\mathbb{F}_{q^m}$ for every $1\leq m\leq n$.
\end{theorem}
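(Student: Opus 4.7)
The plan is to apply Theorem~\ref{main2} on a suitable real interval $\Sigma$, convert the resulting integral irreducible polynomials into Weil $q$-polynomials via $\alpha\mapsto \beta=\alpha+q/\alpha$, pass through Honda--Tate to produce simple abelian varieties over $\mathbb{F}_q$, and use equidistribution of the roots against a carefully chosen target measure to violate, for every $1\leq m\leq n$, the bound $\#C(\mathbb{F}_{q^m})\geq 0$ satisfied by any Jacobian over $\mathbb{F}_{q^m}$.

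Fix $\delta>0$ small enough that $\Sigma:=[-(2\sqrt{q}-\delta),\,2\sqrt{q}-\delta]$ has $d_\Sigma=\sqrt{q}-\delta/2\geq 1$ (possible since $q\geq 2$), and take $D\subset (-2\sqrt{q},2\sqrt{q})$ to be an open real neighbourhood of $\Sigma$. The preparatory and hardest step is to construct a H\"older measure $\nu\in\mathcal{B}_\Sigma$ whose Chebyshev moments
\[
c_m:=\int_\Sigma T_m\!\bigl(x/(2\sqrt{q})\bigr)\,d\nu(x)
\]
are strictly positive for all $m=1,\ldots,n$. I would build such a $\nu$ as a convex combination of the equilibrium measure on $\Sigma$ (which lies in $\mathcal{B}_\Sigma$ since $d_\Sigma\geq 1$, with all $c_m$ equal to $0$) with some $\mu_0\in\mathcal{A}_\Sigma=\mathcal{B}_\Sigma$ (equality from Theorem~\ref{general}) whose mass is skewed toward the right endpoint of $\Sigma$ so that $c_m(\mu_0)>0$ for $1\leq m\leq n$. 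The existence of a suitable $\mu_0$ is the principal obstacle, and I expect it to follow from a perturbation of Smith-type arithmetic constructions together with the convexity of $\mathcal{B}_\Sigma$.

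Given such a $\nu$, feed $(\Sigma,\nu,D)$ into the algorithm of Theorem~\ref{main2}: for every sufficiently large $N$ it returns a monic irreducible $h_N\in\mathbb{Z}[T]$ of degree $N$ with all roots $\beta_1,\ldots,\beta_N\in D$ equidistributing weakly to $\nu$. Set
\[
P_N(T):=T^N h_N\!\Bigl(T+\frac{q}{T}\Bigr)\in\mathbb{Z}[T],
\]
a monic polynomial of degree $2N$ whose roots are the $N$ complex-conjugate pairs $\alpha_j^{\pm}=\tfrac12(\beta_j\pm\sqrt{\beta_j^2-4q})$, each of absolute value $\sqrt{q}$. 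Since $\beta_j^2<4q$ strictly, $\mathbb{Q}(\alpha_j^+)/\mathbb{Q}(\beta_j)$ is a nontrivial CM extension and $P_N$ is irreducible. By Honda--Tate theory, $P_N$ is the minimal polynomial of Frobenius of a simple abelian variety $A_N/\mathbb{F}_q$, with characteristic polynomial $P_N^{e_N}$ and $\dim A_N=Ne_N$ for some integer $e_N\geq 1$.

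If $A_N$ were isogenous over $\mathbb{F}_{q^m}$ to the Jacobian of some curve $C/\mathbb{F}_{q^m}$ (necessarily of genus $Ne_N$), the Lefschetz trace formula would force
\[
\#C(\mathbb{F}_{q^m})\;=\;q^m+1-2 e_N q^{m/2}\sum_{j=1}^{N} T_m\!\bigl(\beta_j/(2\sqrt{q})\bigr)\;\geq\;0.
\]
Weak convergence of the root measures $\mu_{h_N}$ to $\nu$, applied to the continuous test function $x\mapsto T_m(x/(2\sqrt{q}))$, yields $\frac{1}{N}\sum_j T_m(\beta_j/(2\sqrt{q}))\to c_m>0$; hence once $N$ exceeds the bound $N_0:=\max_{1\leq m\leq n}(q^{m/2}+q^{-m/2})/c_m$ (which does not depend on $e_N\geq 1$), the displayed point count is strictly negative for every $1\leq m\leq n$, contradicting $\#C(\mathbb{F}_{q^m})\geq 0$. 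Thus for all $N>N_0$ the simple abelian variety $A_N$ is not isogenous to a Jacobian over any $\mathbb{F}_{q^m}$ with $1\leq m\leq n$, and since distinct $N$ give pairwise non-isogenous $A_N$ (their minimal polynomials $P_N$ have distinct degrees $2N$), letting $N\to\infty$ produces infinitely many such abelian varieties.
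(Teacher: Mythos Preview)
Your overall strategy matches the paper's: construct an arithmetic measure on a real interval inside $(-2\sqrt q,\,2\sqrt q)$ whose Chebyshev moments are strictly positive, feed it into the main theorem, lift the resulting totally real polynomials to Weil $q$-polynomials via $T\mapsto T+q/T$, invoke Honda--Tate, and contradict the nonnegativity of $\#C(\mathbb F_{q^m})$. The parts you execute carefully---irreducibility of $P_N$, the trace formula with the multiplicity $e_N$, and the limiting argument---are correct and in places more explicit than the paper's treatment.

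The genuine gap is the construction of $\nu$, which you yourself flag as ``the principal obstacle'' but do not resolve. First, your claim that the equilibrium measure of $\Sigma=[-(2\sqrt q-\delta),\,2\sqrt q-\delta]$ has all $c_m=0$ is false once $\delta>0$: parametrising by $x=(2\sqrt q-\delta)\cos\phi$ one computes e.g.\ $c_2=(1-\delta/(2\sqrt q))^2-1<0$. Second, ``skewing mass toward the right endpoint'' does not force $c_m>0$ for even $m$ (the $T_m$ are even in $x$), and any probability measure concentrated enough near a single point to make all $T_m$-moments positive has negative logarithmic energy $I(\mu)<0$ and hence cannot lie in $\mathcal B_\Sigma$. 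So the convex-combination recipe you sketch cannot be completed as stated; producing a measure in $\mathcal B_\Sigma$ with the required moment positivity is exactly the nontrivial content you are deferring.

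The paper supplies this missing piece by an explicit construction on the circle $S_{\sqrt\alpha}$: it takes
\[
d\mu_\alpha\;=\;\Bigl(1+\sum_{k\ge 1}\tfrac{c}{k^{2}}\cos(2\pi k\theta)\Bigr)\,d\theta,\qquad c=\tfrac{6}{10\pi^{2}},
\]
which is a genuine probability measure (the perturbation has sup-norm at most $c\cdot\pi^{2}/6=0.1$) with \emph{all} cosine Fourier coefficients equal to $c/(2k^{2})>0$, and then verifies by a direct potential estimate (Proposition~\ref{arithme}) that $U_{\mu_\alpha}\ge 0$ on $\mathbb C$ whenever $\alpha>1.65$, so $\mu_\alpha\in\mathcal B$. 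Pushing forward by $z\mapsto z+\alpha/z$ to a slightly shrunken real interval and letting the radius increase to $\sqrt q$ gives the target measure. This explicit density---together with the verification that it remains arithmetic---is precisely the idea your proposal is missing.
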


Using the algorithm discussed in section \ref{main} of the paper, the authors found an explicit polynomial, with highest order terms being
$$x^{290} - 28x^{289} - {484}x^{288} + 20784x^{287} + \cdots,$$
which is irreducible with all roots inside $[-2\sqrt{3},2\sqrt{3}]\subset\mathbb R$ representing an isogeny class of an abelian variety defined over $\mathbb{F}_3$ that is not isogenous to the Jacobian of any curve over  $\mathbb F_9$.

\subsection{Method of proofs}\label{method}
In this section, we sketch  proofs of our main theorems.   Our main tool for studying the distribution of the roots of polynomials is 
Jensen's formula. Let $P(z)$ be a polynomial with complex coefficients and fix $w\in \mathbb{C}.$ If $z_1, \dots, z_m$ are the roots of $P(z)$ inside the disc $|z-w|<R$ and there is no zero on $|z-w|=R,$ then Jensen's formula states 
\begin{equation*}
    \frac{1}{2\pi} \int_{0}^{2\pi} \log |P(Re^{i\theta}+w)| d\theta - \log |P(w)| =\log \frac{R^m}{|z_1-w|\dots |z_m-w|}.
\end{equation*}
In particular, 
\begin{equation}\label{jensen}
    \frac{1}{2\pi} \int_{0}^{2\pi} \log |P(Re^{i\theta}+w)| d\theta - \log |P(w)| \geq 0. 
\end{equation}
Suppose that $P$ is a monic polynomial. We have $|P(w)|=|z_1-w|\cdots |z_{n}-w|$ where $n=\deg(P).$ Hence for a monic polynomial $P$, we have 
\[
        \frac{1}{2\pi} \int_{0}^{2\pi} \frac{\log |P(Re^{i\theta}+w)|}{n} d\theta  =\frac{m}{n}\log R+ \frac{1}{n}\sum_{i=m+1}^n\log|z_{i}-w|,
\]
where $|z_i-w|<R$ for $1\leq i\leq m$ and $|z_i-w|>R$ for $m+1\leq i\leq n.$ We rewrite the above as
\begin{equation}\label{Jensen}
\frac{1}{2\pi} \int_{0}^{2\pi} \frac{\log |P(Re^{i\theta}+w)|}{n} d\theta  = \int \log(z;w,R) d\mu_{P}(z),
\end{equation}
where
\[
\log(z;w,R):=\begin{cases} \log|z-w| &\text{ if } |z-w|>R, \\ \log{R} &\text{otherwise.}\end{cases}
\]
It follows from \eqref{Jensen}, \eqref{potapp}, and \eqref{potapp_real} that $\{ \mu_{P_n} \}$ converges weakly to a given probability measure $\mu$ if

\begin{equation}\label{conv1}
    \frac{\log|P_n(z)|}{n}= U_{\mu}(z)+o(1)
\end{equation}

for every $z\in \mathbb{C}$ such that $|z-z_{i,n}|>e^{-\sqrt{n}}$, where $z_{i,n}$ are complex roots of $P_n.$
We note that $\frac{\log|P_n(z)|}{n}$ is a harmonic function with singularities at roots of $P_n$ and 
\[\frac{\log|P_n(z)|}{n} = \log|z|+ O\left(\frac{1}{|z|}\right).\]
We note that $ U_{\mu}(z)$ has a similar behavior as $|z| \to \infty.$ Since $U_{\mu}(z)$ and $\frac{\log|P_n(z)|}{n}$ are both harmonic functions away from the roots of $P_n$ and the support of $\mu$ with similar asymptotic behavior at infinity, it follows from the mean value theorem for harmonic functions  that \eqref{conv1} is equivalent to 
\begin{equation}\label{cvx}
    \frac{\log|P_n(z)|}{n}\leq  U_{\mu}(z)+o(1)
\end{equation}
for every $z\in \mathbb{C}$ assuming $\mathbb{C}\setminus\Sigma$ is connected and the interior of $\Sigma$ is empty~\cite[Chapter III, Theorem 4.2]{logpotentials}.
Let 
\begin{equation}\label{defKn}
K_n:=\left\{ p(x)\in \mathbb{R}[x]: \deg(p)\leq n, \frac{\log |p(z)|}{n} \leq U_{\mu}(z) \text{ for every } z\in \mathbb{C}\right\}.
\end{equation}
$K_n$ is a symmetric, convex region inside the vector space of polynomials with real coefficients and degree less than or equal to $n$. For proving Theorem~\ref{main1}, we use Minkowski's second theorem to prove the existence of integral polynomials of degree $n$ inside $K_n$ up to a sub-exponential factor in $n$.  The key observation is that the necessary condition \eqref{conds} implies $K_n$ is well-rounded which means the successive minima of the lattice of integral polynomials with respect to $K_n$ are close to each other and a  lower bound on the volume of $K$ implies the existence of integral polynomials of degree $n$ inside $K_n$ up to a sub-exponential factor in $n$. The following theorem is an important technical result that allows us to use Minkowski's second theorem.
 \begin{theorem}\label{mdim}
 Suppose that $\mu$ is a probability measure with compact support on the complex plane. The following are equivalent:
 \begin{enumerate}
    \item \label{listcond1} $\mu$ is invariant by complex conjugation and~\eqref{conds} holds for every non-zero  $Q(x)\in \mathbb{Z}[x].$
    \item \label{listcond2} $\mu$ is invariant by complex conjugation and \begin{equation}\label{sconds}
 \int \log |Q_m(x_1,\dots,x_m)| d\mu(x_1)\dots d\mu(x_m) \geq 0      
 \end{equation}
  for every non-zero $Q_m(x_1,\dots,x_m)\in \mathbb{Z}[x_1,\dots,x_m].$  
 \end{enumerate}
 \end{theorem}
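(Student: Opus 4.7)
The implication $(2) \Rightarrow (1)$ is immediate by taking $m=1$. The content of the theorem is the converse, which I would prove by induction on $m$; the base case $m=1$ is condition (1) itself.

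For the inductive step, fix a non-zero $Q_m \in \mathbb{Z}[x_1,\ldots,x_m]$. The case $\deg_{x_m} Q_m = 0$ is covered by the inductive hypothesis directly, so assume $\deg_{x_m} Q_m \geq 1$. For any monic $P(y) \in \mathbb{Z}[y]$ of degree $N$ with roots $\beta_1,\ldots,\beta_N$ chosen generically so that none of the specialisations $Q_m(\vec{x}',\beta_k)$ vanishes identically in $\vec{x}' = (x_1,\ldots,x_{m-1})$, the resultant
\[
R_P(\vec{x}') \;:=\; \res_{x_m}\!\bigl(P(x_m),\, Q_m(\vec{x}',x_m)\bigr) \;=\; \prod_{k=1}^N Q_m(\vec{x}',\beta_k)
\]
is a non-zero element of $\mathbb{Z}[x_1,\ldots,x_{m-1}]$. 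Applying the inductive hypothesis to $R_P$ and dividing by $N$ gives
\[
\int F(y)\,d\mu_P(y) \;\geq\; 0, \qquad F(y) \;:=\; \int \log|Q_m(\vec{x}',y)|\,d\mu^{\otimes(m-1)}(\vec{x}'),
\]
where $\mu_P = \tfrac{1}{N}\sum_k \delta_{\beta_k}$. Thus $F$ integrates non-negatively against every arithmetic root measure of a monic integer polynomial.

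To close the induction I need $\int F\,d\mu \geq 0$. The map $F$ is upper semi-continuous and bounded above on the compact support of $\mu$, since $y \mapsto \log|Q_m(\vec{x}',y)|$ is upper semi-continuous (dropping to $-\infty$ at the roots), and upper semi-continuity is preserved under $\mu^{\otimes(m-1)}$-averaging via the reverse Fatou lemma. Consequently, whenever $\mu_{P_n} \stackrel{*}{\rightharpoonup} \mu$ for some sequence of monic integer polynomials $P_n$, the Portmanteau theorem gives
\[
\int F\,d\mu \;\geq\; \limsup_{n \to \infty} \int F\,d\mu_{P_n} \;\geq\; 0,
\]
completing the induction.

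The main obstacle is the weak-$*$ approximation of an arbitrary $\mu \in \mathcal{B}_\Sigma$ by arithmetic root measures, which is essentially the qualitative form of $\mathcal{A}_\Sigma = \mathcal{B}_\Sigma$ (Theorem~\ref{general}). To avoid circularity with Theorem~\ref{general}'s quantitative proof, which itself uses Theorem~\ref{mdim}, the plan is to establish only the \emph{qualitative} $1$-variable density — using the classical Fekete--Szeg\H{o} construction together with the probabilistic sampling plus greedy deformation scheme sketched in Section~\ref{method} — since only weak$^*$ convergence is required by the upper semi-continuity step above. With this qualitative density in hand, the resultant induction upgrades the $1$-variable positivity of (1) to the full multi-variable positivity of (2).
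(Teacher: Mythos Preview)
Your resultant-based induction is in the same spirit as the paper's, but the closing step has a genuine circularity. You need monic integer polynomials $P_n$ with $\mu_{P_n}\stackrel{*}{\rightharpoonup}\mu$. In this paper, obtaining such polynomials (even without irreducibility, even only qualitatively) requires bounding \emph{all} successive minima of $K_n$ (Proposition~\ref{upperbound}), which rests on the lower bound for partial products $\prod_{i<m}\lambda_i$ (Proposition~\ref{lowerd}), which in turn invokes Theorem~\ref{mdim} applied to the determinantal polynomial $Q_m = \det[P_i(x_j)]/\prod_{i<j}(x_i-x_j)$. Your proposed workaround does not break this loop: Fekete--Szeg\H{o} gives only existence of infinitely many conjugate sets near $\Sigma$, not equidistribution toward a \emph{prescribed} $\mu\in\mathcal{B}_\Sigma$; and the ``sampling plus deformation'' scheme of Section~\ref{method} is exactly the construction whose integer-coefficient step needs Proposition~\ref{upperbound}, hence Theorem~\ref{mdim}.

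The paper avoids this circularity by not asking for weak-$*$ density at all. The induction from $m$ to $m+1$ writes $Q$ as a polynomial in the last variable with leading coefficient $a_d(z_1,\dots,z_m)$, sets $a_Q=\int\log|a_d|\,d\mu^{\otimes m}\ge 0$ (by induction), and introduces the root measure $\mu_Q$ of $Q(\cdot,x_{m+1})$ averaged over $\mu^{\otimes m}$, so that $\int U_Q\,d\mu = a_Q + \int U_\mu\,d\mu_Q$. The only $1$-variable input required is Proposition~\ref{mink}: a \emph{single} (not monic, possibly low-degree) integral polynomial $P_0$ with the one-sided bound $\tfrac{1}{n}\log|P_0|\le U_\mu + o(1)$, which follows directly from Minkowski's theorem and the volume estimate for $K_n$, \emph{without} any appeal to Theorem~\ref{mdim}. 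Integrating this inequality against $d\mu_Q$ yields
\[
a_Q + \tfrac{1}{n}\!\int\log|P_0|\,d\mu_Q \;\le\; \int U_Q\,d\mu + o(1),
\]
and the left-hand side is $\ge 0$ by the same resultant trick you use: $\res_{x_{m+1}}(Q,P_0)\in\mathbb{Z}[x_1,\dots,x_m]\setminus\{0\}$, and the inductive hypothesis applies to its $\mu^{\otimes m}$-integral. Letting $n\to\infty$ finishes. The point is that the upper bound $\log|P_0|/n\le U_\mu+o(1)$ is much cheaper than $\mu_{P_n}\to\mu$, and is all the induction actually needs.
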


Recall that 
\[
\Sigma(\rho):=\{z\in \mathbb{C}: |z-\sigma|<\rho \text{ for some }\sigma\in\Sigma \}.
\] 
Note that for any open set $D$ containing $\Sigma,$ there exist $\rho>0$ such that $\Sigma(\rho)\subset D.$
\newline

For $\Sigma\subset \mathbb{C}$, we use  Rouch\'e's theorem to show that all roots of our irreducible polynomial are inside $\Sigma(\rho)$ and hence $D$. This is similar to the method of Fekete and Szeg\"o~\cite[Theorem I]{MR72941}.
For $\Sigma\subset \mathbb{R}$, Robinson used the Chebychev polynomials and constructed an integral polynomial of degree $n$ with $n$ sign changes on $\Sigma_{\mathbb R}(\rho)\subset \mathbb{R}$ to prove that the roots are real and inside $\Sigma_{\mathbb R}(\rho).$ Our results do not rely on the existence of Chebychev's polynomials on $\Sigma$; see subsection~\ref{chebsec} and Proposition~\ref{chpolreal}.
Proposition~\ref{chpolreal} allows us to improve some results of Smith~\cite[Proposition 4.1]{Smith}. 
\\

For proving Theorem~\ref{main2}, we apply a lattice algorithm due to Schnorr and find an irreducible, integral polynomial inside $e^{\frac{Cn\log(\log(n))^3}{\log(n)}}K_n$ in polynomial time in $n$ for some constant $C$ dependent on $\mu$. In fact, we take the polynomial to be monic and Eisenstein at the prime 2. Note that this differs from the strongest bound for which we prove existence which is $e^{Cn^{1-\delta_0}}K_n$ for some explicit $\delta_0>0$. Other versions of Schnorr's algorithm can achieve stronger bounds at a super-polynomial run time. 

\section{Minkowski's theorem applied to the lattice of integral polynomials}
\subsection{ Logarithmic potentials}
Recall the notation
in subsections~\ref{result} and~\ref{method}.  In this section, we assume that $\mu$ is supported on a compact set $\Sigma\subset \mathbb{C}$  and is a H\"older probability measure on the complex plane with exponent $\delta>0.$ Hence for every $a<b, c<d$, and some $A>0$,
\[
\mu([a,b]\times[c,d]) \leq A\max(|b-a|, |c-d|)^{\delta}.
\]
For two compact subsets $B, B'\subset \mathbb{C}$, let
 \[d(B,B'):=\inf_{\substack{z\in B\\ z'\in B'}} |z-z'|.\]
Recall the definition of the logarithmic potential of $\mu$ from~\eqref{logpot}:
$$
    U_{\mu}(z)= \int \log|z-w| d\mu(w).
$$
Finally, define the logarithmic energy of $\mu$ as
\[
I(\mu):=\int \log|z_1-z_2|d\mu(z_1)d\mu(z_2)=\int U_\mu(z)d\mu(z).
\]
We prove a lemma on the asymptotic behaviour of logarithmic potentials.
\begin{lemma}\label{lem1}
Suppose that $\mu$ is a probability measure with compact support inside $|z|<r$ for $z\in \mathbb{C}.$ If $|w|>2r$, we have
\[
U_{\mu}(w)=\log|w|+O\left(\frac{r}{|w|}\right).
\]
\end{lemma}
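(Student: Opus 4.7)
The plan is to factor $w-z = w(1 - z/w)$ inside the logarithm, exploit the smallness of $|z/w|$ in the support of $\mu$, and then integrate termwise.

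First, for every $z$ in the support of $\mu$ we have $|z| < r$ and by hypothesis $|w| > 2r$, so $|z/w| < 1/2$. Thus
\[
\log|w-z| = \log|w| + \log\bigl|1 - z/w\bigr|.
\]
The second term is the real part of the principal branch of $\log(1-z/w)$, which is holomorphic on $|z/w| < 1$. Using the Taylor expansion $\log(1-u) = -u - u^2/2 - u^3/3 - \cdots$ valid for $|u| < 1$, a standard estimate gives
\[
\bigl|\log(1-u)\bigr| \le \sum_{k \ge 1} \frac{|u|^k}{k} \le \frac{|u|}{1-|u|} \le 2|u|
\qquad \text{for } |u| \le 1/2.
\]
Applying this with $u = z/w$ yields $\bigl|\log|1-z/w|\bigr| \le 2|z|/|w| \le 2r/|w|$ for every $z$ in the support of $\mu$.

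Integrating against $\mu$ and using that $\mu$ is a probability measure,
\[
U_\mu(w) = \int \log|w| \, d\mu(z) + \int \log|1 - z/w| \, d\mu(z) = \log|w| + O\!\left(\frac{r}{|w|}\right),
\]
which is the claimed asymptotic. There is no real obstacle here: the only thing to be careful about is that we are taking the real part of a Taylor series for the complex logarithm, and the bound above controls the modulus of that real part uniformly on the support of $\mu$, so the $O$-constant is absolute (one may take it to be $2$).
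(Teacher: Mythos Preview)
Your proof is correct and follows essentially the same approach as the paper: both factor $\log|w-z|=\log|w|+\log|1-z/w|$ and bound the second term by $O(r/|w|)$ using $|z/w|<1/2$. You simply spell out the Taylor-series estimate that the paper leaves implicit.
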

\begin{proof}
We have
\begin{align*}
    \left|U_{\mu}(w)-\log|w|\right|\leq  \int \left|\log |w-x| -\log|w| \right|d\mu(x)
\\
=\int \left|\log \left|1-\frac{x}{w}\right|\right| d\mu(x) \ll \frac{r}{|w|}.
\end{align*}
\end{proof}

\begin{lemma}\label{holderr}
Suppose that $\mu$ is a H\"older probability measure on the complex plane with exponent $\delta>0$. Then for any $z_1,z_2\in \mathbb{C}$ with $|z_1-z_2|<1/2,$ we have
\[
\left|U_{\mu}(z_1)-U_{\mu}(z_2)\right|\ll |z_1-z_2|^{\delta'},
\]
where $\delta'<\min(1/2,\delta/2)$ is any positive exponent.
\end{lemma}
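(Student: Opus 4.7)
The plan is to split the defining integral for $U_{\mu}(z_1)-U_{\mu}(z_2)$ into a local piece near $z_1,z_2$ and a nonlocal piece, control the first via the H\"older hypothesis, control the second via the smoothness of $\log$ away from its singularity, and then optimize the cutoff. Write $r:=|z_1-z_2|$ (which we may assume is small) and
\[
U_{\mu}(z_1)-U_{\mu}(z_2)=\int \log\left|1+\frac{z_2-z_1}{z_1-w}\right|\, d\mu(w).
\]
Choose a cutoff $R=r^{\beta}$ with $\beta\in(0,1)$ to be optimized, large enough that $R>2r$, and partition $\mathbb{C}$ into the inner region $E=\{w:\min(|w-z_1|,|w-z_2|)<R\}$ and its complement $E^{c}$.

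On $E^{c}$ we have $|z_1-w|\geq R>2r$, so $|(z_2-z_1)/(z_1-w)|\leq 1/2$ and the integrand is $O(r/R)$; the outer contribution is therefore $O(r^{1-\beta})$. For the inner piece it suffices to estimate
\[
I(z):=\int_{|w-z|<R}\bigl|\log|z-w|\bigr|\, d\mu(w)
\]
for a single centre $z$, since $E$ lies in two balls of radius $R$. The H\"older hypothesis, applied to a square circumscribing each ball, gives $F(t):=\mu(\{w:|w-z|\leq t\})\ll t^{\delta}$. A layer-cake / integration-by-parts computation then yields
\[
I(z)=-F(R)\log R+\int_{0}^{R}\frac{F(t)}{t}\, dt\ \ll\ R^{\delta}\,|\log R|,
\]
using $F(t)\log t\to 0$ as $t\to 0^{+}$. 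Hence the inner contribution is $O(R^{\delta}|\log R|)=O(r^{\beta\delta}|\log r|)$.

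Combining the two regions and absorbing the logarithm into an arbitrarily small shrinkage of the exponent gives, for any $\delta''<\delta$,
\[
|U_{\mu}(z_1)-U_{\mu}(z_2)|\ \ll\ r^{1-\beta}+r^{\beta\delta''}.
\]
The choice $\beta=1/(1+\delta'')$ balances the two terms and produces the exponent $\delta''/(1+\delta'')$. An elementary check shows $\delta/(1+\delta)\geq\min(1/2,\delta/2)$ for every $\delta>0$, with equality only at $\delta=1$, so any prescribed $\delta'<\min(1/2,\delta/2)$ can be reached by taking $\delta''$ close enough to $\delta$. The only nontrivial step is the layer-cake bound on $I(z)$; the remainder is standard optimization familiar from regularity estimates for logarithmic potentials, so I expect no real obstacle beyond checking the decay of $F(t)\log t$ at the origin.
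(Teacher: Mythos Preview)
Your argument is correct and follows the same near/far decomposition as the paper's proof, but with two differences worth noting. First, the paper fixes the cutoff at $2\sqrt{r}$ (your $\beta=1/2$) rather than optimizing; with $\beta=1/2$ your estimate reads $r^{1/2}+r^{\delta/2}|\log r|$, which is exactly the paper's bound and the source of the exponent $\min(1/2,\delta/2)$. Your optimized choice $\beta=1/(1+\delta'')$ in fact yields the slightly sharper H\"older exponent $\delta/(1+\delta)\geq\min(1/2,\delta/2)$, so you recover the stated lemma with room to spare. Second, for the near region the paper decomposes into shells $U_k=\{r^{k}<\min_i|w-z_i|\leq r^{k-1}\}$ and bounds each contribution by $\mu(U_k)\cdot k|\log r|\ll r^{k\delta/2}k|\log r|$, summing over $k\geq1$; your single layer-cake estimate on $I(z)$ is a cleaner packaging of the same idea.
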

\begin{proof}
    Suppose that $z_1,z_2\in \mathbb{C}$ and $|z_1-z_2|<1/2.$ We have

    \begin{align*}
        \left|U_{\mu}(z_1)-U_{\mu}(z_2)\right|&=\left| \int \log\frac{|z_1-w|}{|z_2-w|} d\mu(w)\right|.
    \end{align*}
We define the following covering of $\mathbb C\setminus\{z_1,z_2\}$:
\begin{align*}
U_0:&=\left\{w\in \mathbb{C}: |w-z_2|>2 \sqrt{|z_1-z_2|} \right\},
\\
U_1:&=\left\{w\in \mathbb{C}: |z_1-z_2|<\min(|w-z_2|,|w-z_1|)\leq 2\sqrt{|z_1-z_2|} \right\},
\\
U_k:&= \left\{w\in \mathbb{C}: |z_1-z_2|^k<\min(|w-z_2|,|w-z_1|)\leq |z_1-z_2|^{k-1}\right\},
\end{align*}
where  $k\geq 2$. Hence, 
\begin{align*}
        \left|U_{\mu}(z_1)-U_{\mu}(z_2)\right|\leq \sum_{i\geq 0}\left| \int_{U_i} \log\frac{|z_1-w|}{|z_2-w|} d\mu(w)
\right|.
    \end{align*}
For $w\in U_0$, we have $\left|\frac{z_1-z_2}{z_2-w}\right|<1/2$. We write the Taylor expansion of $\log$ and obtain 
\[
\left| \int_{U_0} \log\frac{|z_1-w|}{|z_2-w|} d\mu(w)
\right|=\left| \int_{U_0} \log \left| 1+\frac{z_1-z_2}{z_2-w}\right| d\mu(w)
\right| \leq \sum_{m\geq 1}\int_{U_0} \frac{1}{m}\left| \frac{z_1-z_2}{z_2-w}\right|^m d\mu(w)\ll |z_1-z_2|^{1/2}.
\]
For $w\in U_k$, where $k\geq 1$, we have 
\[
\left| \int_{U_k} \log\frac{|z_1-w|}{|z_2-w|} d\mu(w)
\right| \leq \mu(U_k)k\big|\log|z_1-z_2| \big|\ll |z_1-z_2|^{\frac{k\delta}{2}}k\big|\log|z_1-z_2|\big|,
\]
where we used $\mu(U_k)\ll |z_1-z_2|^{\frac{k\delta}{2}}$, because $\mu$ is a H\"older probability measure with exponent $\delta>0$.
 Therefore,
\[
 \left|U_{\mu}(z_1)-U_{\mu}(z_2)\right|\ll |z_1-z_2|^{1/2} +\sum_{k\geq 1}|z_1-z_2|^{k\delta/2}k\big|\log|z_1-z_2|\big|\ll |z_1-z_2|^{1/2}+|z_1-z_2|^{\delta/2}\big|\log|z_1-z_2|\big|.
\]
Finally, we have
\[
\left|U_{\mu}(z_1)-U_{\mu}(z_2)\right|\ll |z_1-z_2|^{\delta'},
\]
where $\delta'<\min(1/2,\delta/2)$ is any positive exponent.
\end{proof}


\subsection{Minkowski's theorem}
Recall the definition of $K_n$ from~\eqref{defKn}:
\[
K_n=\left\{ p(x)\in \mathbb{R}[x]: \deg(p)\leq n \text{ and } \frac{\log |p(z)|}{n} \leq U_{\mu}(z) \text{ for every } z\in \mathbb{C} \right\}.
\]
We identify the space of polynomials with real coefficients of degree less than or equal to $n$ with $\mathbb{R}^{n+1}$ by sending a polynomial to its coefficients. It is easy to see that $K_n$ is a symmetric convex region of $\mathbb{R}^{n+1}$.

\begin{lemma}\label{multlem}
      Suppose that $\mu$ is a probability measure with compact support inside $|z|<r$ for $z\in \mathbb{C}.$ Let $p(x)\in \lambda K_n$ for some $\lambda>0$ where $\deg(p)<n$. Then
\[
xp(x)\in r\lambda K_n.
\]  
\end{lemma}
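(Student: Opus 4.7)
The plan is to verify the defining inequality $\log|zp(z)| \leq nU_\mu(z) + \log(r\lambda)$ for every $z \in \mathbb{C}$, which is exactly the statement $xp(x) \in r\lambda K_n$. I would split the argument into two regions according to whether $|z| \leq r$ or $|z| > r$.

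For $|z| \leq r$ the estimate is immediate: the hypothesis $p \in \lambda K_n$ gives $\log|p(z)| \leq nU_\mu(z) + \log\lambda$, and adding $\log|z| \leq \log r$ to both sides yields the claim on the compact disc. For $|z| > r$ I would set $f(z) := \log|zp(z)| - nU_\mu(z) - \log(r\lambda)$ and observe that $f$ is subharmonic on $\{|z| > r\}$, because the support of $\mu$ lies inside $\{|z| < r\}$ so that $U_\mu$ is harmonic on the exterior, while $\log|zp(z)|$ is subharmonic on all of $\mathbb{C}$. The first case gives $f \leq 0$ on the boundary circle $|z| = r$. The maximum principle for subharmonic functions on the unbounded annulus then reduces matters to controlling $\limsup_{|z|\to\infty} f$. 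Writing $c$ for the leading coefficient of $p$ and using Lemma~\ref{lem1},
\[
f(z) = (\deg p + 1 - n)\log|z| + \log|c| - \log(r\lambda) + o(1) \qquad (|z| \to \infty),
\]
so when $\deg p + 1 < n$ the limit is $-\infty$ and the maximum principle closes the argument. The delicate sub-case is $\deg p = n-1$, where $f$ tends to the finite limit $\log|c| - \log(r\lambda)$ and I need the leading-coefficient bound $|c| \leq r\lambda$.

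To prove this leading-coefficient bound I would apply Jensen's formula to $p$ on circles $|z| = R$ with $R > r$. Factoring $p(z) = c\prod_i(z-\alpha_i)$ and using $\tfrac{1}{2\pi}\int_0^{2\pi}\log|Re^{i\theta}-\alpha_i|\,d\theta = \max(\log R,\log|\alpha_i|)$, the circular average of $\log|p|$ equals $\log|c| + (n-1)\log R + \sum_{|\alpha_i| > R}(\log|\alpha_i| - \log R)$. On the other hand, the pointwise bound $\log|p(z)| \leq nU_\mu(z) + \log\lambda$ combined with the mean-value identity $\tfrac{1}{2\pi}\int_0^{2\pi} U_\mu(Re^{i\theta})\,d\theta = \log R$ (valid for $R > r$ by Fubini, since $\operatorname{supp}(\mu) \subset \{|w| < R\}$ and $\tfrac{1}{2\pi}\int \log|Re^{i\theta}-w|\,d\theta = \log R$ for such $w$) forces the same average to be $\leq \log\lambda + n\log R$. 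Dropping the non-negative tail $\sum_{|\alpha_i|>R}(\log|\alpha_i| - \log R)$ gives $\log|c| \leq \log\lambda + \log R$ for every $R > r$; letting $R \to r^+$ yields $|c| \leq r\lambda$, completing the argument.

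The hard part is precisely the sub-case $\deg p = n-1$: the naive additive estimate from $p \in \lambda K_n$ alone is off by a factor of $\log(|z|/r)$ on the exterior and cannot be absorbed by elementary means. Jensen's formula together with the circular mean of $U_\mu$ on circles enclosing $\operatorname{supp}(\mu)$ supplies the missing input to control the leading coefficient by $r\lambda$, after which the maximum principle finishes the proof.
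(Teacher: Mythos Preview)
Your proof is correct and follows the same overall strategy as the paper: both split into $|z|\le r$ (where the trivial bound $|z|\le r$ added to the hypothesis suffices) and $|z|>r$ (where the maximum/minimum principle is applied to a function harmonic on the exterior, with the asymptotic at infinity supplied by Lemma~\ref{lem1}). The paper works with $h(z)=U_\mu(z)-\tfrac{1}{n}\log|zp(z)|$ and asserts directly that its minimum over $\mathbb{C}$ is attained in $|z|<r$; for the borderline case $\deg p=n-1$ this amounts to the (unstated) observation that $h$ extends harmonically across $\infty$ on the Riemann sphere, so the minimum principle on $\{|z|\ge r\}\cup\{\infty\}$ forces the minimum onto the boundary circle $|z|=r$. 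Your handling of that same case---averaging the pointwise bound $\log|p|\le nU_\mu+\log\lambda$ over circles $|z|=R>r$ and combining it with the mean-value identity $\tfrac{1}{2\pi}\int U_\mu(Re^{i\theta})\,d\theta=\log R$ to extract the leading-coefficient bound $|c|\le r\lambda$---is an equivalent but more explicit way to control the limit at infinity and close the argument.
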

\begin{proof} Let
\[
h(z):=U_{\mu}(z)-\frac{\log |zp(z)|}{n}.
\]
We note that $h(z)$ is a harmonic function on $|z|>r $ and outside roots of $p.$ By Lemma~\ref{lem1}, we have 
\[
 h(z)=\frac{n-\deg(p)-1}{n} \log|z|+\frac{\log|a_p|}{n}+O(1/|z|),
\]
where $a_p$ is the top coefficient of $p.$ By the above and since $\deg(p)< n$ and $h(z)$ is harmonic, the minimum of $h(z)$ is obtained inside  $ |z|<r$. Let $C:=\inf_{z\in \mathbb{C}} h(z)=\inf_{|z|<r} h(z).$ Since $p(x)\in \lambda K_n$, we have 
\[
|zp(z)| \leq r \lambda e^{n U_{\mu}(z)}
\]
for any $|z|<r.$ This implies that
\[
C\geq -\frac{\log r\lambda}{n}.
\]

Therefore,
\[
 e^{nU_{\mu}(z)-\log |zp(z)|}= e^{nh(z)}\geq e^{nC}\geq (r\lambda)^{-1}
\]
for any $z\in \mathbb{C}.$ This completes the proof of our lemma. 
\end{proof}
\begin{lemma}\label{derlem}
    Suppose that $p(x)\in \lambda K_n$ for some $\lambda>0.$ We claim that 
\[
p'(x)\in An^C\lambda K_n,
\]
where $p'$ is the derivative of $p$ and $A$ and $C$ are constants  that only depend on  $\mu.$
\end{lemma}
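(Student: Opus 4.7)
The plan is to combine Cauchy's integral formula for the derivative with the Hölder continuity of $U_\mu$ established in Lemma~\ref{holderr}. Fix $z_0 \in \mathbb{C}$ and a radius $R \in (0, 1/2)$. Cauchy's formula gives
\[
|p'(z_0)| \leq \frac{1}{R}\max_{|w - z_0| = R} |p(w)|.
\]
The membership $p \in \lambda K_n$ bounds the right-hand side via $|p(w)| \leq \lambda e^{n U_\mu(w)}$, while Lemma~\ref{holderr} yields a uniform estimate $U_\mu(w) \leq U_\mu(z_0) + B R^{\delta'}$ on the circle $|w - z_0| = R$, with constants $B, \delta' > 0$ depending only on $\mu$. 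Combining these,
\[
|p'(z_0)| \leq \frac{\lambda}{R} \exp\bigl(n U_\mu(z_0) + B n R^{\delta'}\bigr).
\]

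Next I would choose $R$ to balance the Cauchy loss $1/R$ against the exponential loss from the Hölder perturbation. Taking $R = n^{-1/\delta'}$ makes $n R^{\delta'} = 1$, so the exponential factor becomes $e^B = O(1)$, while $1/R = n^{1/\delta'}$. This yields
\[
|p'(z_0)| \leq A \lambda n^{1/\delta'} e^{n U_\mu(z_0)}
\]
for a constant $A$ depending only on $\mu$. Taking logarithms, dividing by $n$, and rearranging gives exactly $p' \in A n^{C} \lambda K_n$ with $C = 1/\delta'$. The degree condition $\deg p' \leq n$ and the reality of coefficients are immediate, so this verifies the claimed membership pointwise in $z_0$.

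The only loose end is that $R = n^{-1/\delta'} < 1/2$ only for $n$ above a $\mu$-dependent threshold $n_0$. For the finitely many smaller $n$, one simply enlarges $A$ to absorb the bound; the coefficient-level observation that differentiation has bounded operator norm on polynomials of degree $\leq n_0$ (in any reasonable basis) makes this automatic. I expect no serious obstacle: the entire content of the proof is the Cauchy/Hölder trade-off above, and the constant $C = 1/\delta'$ that emerges is pinned down directly by the Hölder exponent of $\mu$ appearing in the hypothesis of this section.
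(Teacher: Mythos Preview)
Your proof is correct and follows essentially the same route as the paper: bound $|p'(z_0)|$ by the maximum of $|p|$ on a small disk, invoke the H\"older continuity of $U_\mu$ from Lemma~\ref{holderr}, and choose the radius as a negative power of $n$ to balance. The only cosmetic difference is that the paper cites a Bernstein-type inequality (giving the factor $n/r$) where you use Cauchy's estimate (giving $1/R$); your version is slightly sharper but the mechanism is identical.
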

\begin{proof}
Let $D(z_0,r)$ be the disk centered at some $z_0\in \mathbb{C}$ and radius $0 < r < \frac{1}{2}.$ By Bernstein inequality~\cite[Corollary 5.1.6]{MR1367960}, for any $r>0$,
\[
\sup_{z\in D(z_0,r)} \left|p'(z)\right|  \leq \frac{\deg p}{r}\sup _{z\in D(z_0,r)} |p(z)|.
\]
Hence, we have
\[
|p'(z_0)|\leq \frac{n}{r}\sup _{z\in D(z_0,r)} |p(z)|  \leq \lambda \frac{n}{r}\sup _{z\in D(z_0,r)} e^{nU_{\mu}(z) }.
\]
By Lemma~\ref{holderr}, we have for any $z\in D(z_0,r)$,
\[
 e^{nU_{\mu}(z) } = e^{n(U_{\mu}(z)-U_{\mu}(z_0)) }e^{nU_{\mu}(z_0) }  \leq e^{A'nr^{\delta'}}e^{nU_{\mu}(z_0) }.
\]
for some $A'>0$ and $\delta'>0$ that only depends on $\mu.$ By taking $r=n^{-C'}$ for any $C'>1/\delta'$, it follows that 
\[
|p'(z_0)|\leq  \lambda \frac{n}{r} e^{A'nr^{\delta'}} e^{nU_{\mu}(z_0) } \leq A\lambda n^{C'+1}e^{nU_{\mu}(z_0) },
\]
for any $z_0\in \mathbb{C},$ where $A=e^{A'}$. This implies that $ p'(x)\in An^C\lambda K_n$ where $C=C'+1.$ 
\end{proof}

Let $\Gamma_n$ be the lattice of integral polynomials of degree less than or equal to $n$ embedded into $\mathbb{R}^{n+1}$. The successive minima of $K_n$ on $\Gamma_n$ are defined by setting the $k$-th successive minimum $\lambda_k$ to be the infimum of the numbers $\lambda$ such that $\lambda K_n$ contains $k+1$ linearly-independent vectors of $\Gamma_n$.
We have $0 < \lambda_0\leq \lambda_1 \leq \dots \leq \lambda_n <\infty$. Minkowski's second theorem states that 
\begin{equation}\label{minksec}
  \frac{2^{n+1}\vol(K_n)^{-1}}{(n+1)!} \leq  \lambda_0\lambda_1\dots \lambda_n\leq 2^{n+1}\vol(K_n)^{-1}.
\end{equation}



\begin{proposition}\label{successive}
 We have 
\[
\lambda_{m+1}\leq A n^{C}\lambda_{m}
\]
for any $0\leq m< n$, and some constants $A$ and $C$ that only depend on  $\mu.$
\end{proposition}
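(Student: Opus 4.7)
The plan is to produce, from any choice of linearly independent lattice points $v_0, v_1, \ldots, v_m \in \Gamma_n$ achieving the first $m+1$ successive minima (so $v_i \in \lambda_i K_n \subseteq \lambda_m K_n$), an integer polynomial of degree at most $n$ that lies outside $V_m := \mathrm{span}_{\R}(v_0, \ldots, v_m)$ but inside $A n^C \lambda_m K_n$. The two operations supplying candidate vectors are differentiation and multiplication by $x$: both preserve integrality of coefficients, and both preserve degree $\le n$ (the latter only when applied to a polynomial of degree $<n$), so they send $\Gamma_n$ (resp.\ its degree-$<n$ sublattice) into $\Gamma_n$. Lemma~\ref{derlem} then gives $v_i' \in A_1 n^{C_1} \lambda_m K_n$ for every $i$, while Lemma~\ref{multlem} gives $xv_i \in r\lambda_m K_n$ whenever $\deg v_i < n$, where $r$ is any fixed radius with $\Sigma \subset \{|z|<r\}$.

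Next I would argue by contradiction: suppose every $v_i'$ and every $xv_i$ with $\deg v_i < n$ lies in $V_m$. Since these operations are linear and the $v_i$ span $V_m$, this would make $V_m$ closed under $d/dx$ globally and closed under multiplication by $x$ on $V_m \cap \R[x]_{<n}$. Set $d := \max_i \deg v_i$ and pick $v_j$ with $\deg v_j = d$. Then $v_j, v_j', v_j'', \ldots, v_j^{(d)}$ are $d+1$ nonzero elements of $V_m$ with strictly decreasing degrees, hence linearly independent, so $\R[x]_{\le d} \subseteq V_m$. If $d = n$, this already forces $\dim V_m \geq n+1 > m+1$, a contradiction. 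If $d < n$, then $x^d \in V_m$ has degree $<n$, so $x^{d+1} \in V_m$ by $x$-stability; iterating yields $x^d, x^{d+1}, \ldots, x^n \in V_m$, whence $\R[x]_{\le n} \subseteq V_m$, again contradicting $\dim V_m = m+1 \le n$.

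Consequently at least one of the integer polynomials $v_i'$ or $xv_i$ lies outside $V_m$, producing a lattice element of $A n^C \lambda_m K_n$ independent of $v_0, \ldots, v_m$, provided $A$ and $C$ are chosen large enough that $A n^C \geq \max(r, A_1 n^{C_1})$. By the definition of successive minima this gives $\lambda_{m+1} \leq A n^C \lambda_m$, as desired. The main obstacle is the algebraic observation that no proper subspace of $\R[x]_{\le n}$ can be simultaneously stable under $d/dx$ and under multiplication by $x$ on its degree-$<n$ part; once this is in hand, the rest is direct bookkeeping using Lemmas~\ref{multlem} and~\ref{derlem}, and one sees in particular why the polynomial growth $n^C$ in the bound is the natural one — it is exactly the cost of a single differentiation, since multiplication by $x$ only costs a constant factor $r$.
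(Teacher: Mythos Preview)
Your proof is correct and follows essentially the same approach as the paper's: use Lemmas~\ref{derlem} and~\ref{multlem} to show that the span $V_m$ cannot be simultaneously stable under differentiation and (on its degree-$<n$ part) multiplication by $x$, so one of these operations must produce an integral polynomial outside $V_m$ lying in $An^C\lambda_m K_n$. The paper organizes this as a direct two-step argument (try $p_i'$ first; if $V_m$ is derivation-closed, deduce $V_m=\R[x]_{\le m}$ and then use $xp_l$), whereas you package both operations into a single contradiction, but the substance is identical---note only that your intermediate claim ``$V_m$ is closed under multiplication by $x$ on $V_m\cap\R[x]_{<n}$'' does not follow in general from $xv_i\in V_m$ for those $v_i$ of degree $<n$, though in the case $d<n$ where you actually use it all $v_i$ have degree $<n$ and the claim is valid.
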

\begin{proof}

Suppose that $p_i\in \Gamma_n$ for $0\leq i\leq m$ form the set of linearly independent integral polynomials such that $p_i\in \lambda_i K_n.$ If for some $0\leq i\leq m$, $p_i'$ is linearly independent from $\{p_i:0\leq i\leq m \}$, then by Lemma~\ref{derlem},
\[
p'_i(x)\in An^C\lambda_m K_n.
\]
Hence,
\[
\lambda_{m+1}\leq A n^C\lambda_m
\]
which implies our proposition. Otherwise, $V_m:=\text{span}_{\mathbb{R}}\left<p_0,\dots,p_m\right>$ is closed under derivation. It follows that
\[
V_m=\text{span}_{\mathbb{R}}\left<1,x,\dots,x^m\right>.
\]
This implies that there exists $0\leq l\leq m$ such that  $\deg(p_j)\leq \deg (p_l)= m$ for every  $0\leq j\leq m.$ Let $q(x):=xp_l(x).$ Since $\deg q> m,$ $q$ is linearly independent of $V_m.$ By Lemma~\ref{multlem}, 
\[
q(x)\in A\lambda_m K_n
\]
for some constant $A>0$ that only depends on $\mu.$ Hence,
\[
\lambda_{m+1}\leq A\lambda_m
\]
and this completes the proof of our proposition. 
\end{proof}

\subsection{Lower bound on the volume of $K_n$}\label{bound_Kn}
In this subsection, we give a lower bound on the Euclidean volume of $K_n$ as a subset of $\mathbb{R}^{n+1}.$ Our method is to find $n+1$ linearly independent polynomials of degree at most $n$ inside $K_n$ and compute the volume of the simplex given by the convex hull of the origin and these $n+1$ points. Since $K_n$ is symmetric, there are $2^{n+1}$ simplexes with the same volume and disjoint interior inside $K_n$ by choosing different signs for the $n+1$ vertices. The total volume of these simplexes give a lower bound for the volume of $K_n.$
\subsubsection{Finding a simplex inside $K_n$} For simplicity in this section,  we assume that $n$ is odd and we find a simplex inside $K_n$ and estimate its volume. In the even case, fix one of the $z$ values defined below to be fixed real number. The asymptotics will be the same. Recall that $\mu$ is invariant by complex conjugation. 
Let $[a,b]\times[-c,c]\subset \mathbb{C}$ be a rectangle containing  $\Sigma.$ 
Fix $0<M\in \mathbb{Z}$ and $0<L \in \mathbb{R}$ where $n^\varepsilon<M \le n^{1/2-\varepsilon}$ and $L<n^{1-\varepsilon}M^{-2}$ for some fixed $\varepsilon>0.$
We partition each side of the rectangle into $2M$ equal length sub-intervals, and obtain a partition of the rectangle as 
\[
[a,b]\times[-c,c]= \bigcup_{0\leq i,j< 2M}B_{ij},
\]
where $B_{ij}:=[a_i,a_{i+1}]\times[c_{j},c_{j+1}]$ and $a_i:=a+\frac{i(b-a)}{2M}$ and $c_j:=-c+\frac{j(2c)}{2M}.$ Since $\mu$ is a H\"older measure with exponent $\delta$,  we have 
\begin{equation}\label{holder}
    \mu(B_{ij})\ll M^{-\delta}
\end{equation} 
for every $0\le i,j < 2M.$ Similarly, for $\Sigma\subset \mathbb{R}$, take $[a,b]$ containing $\Sigma$ and  write 
\[
[a,b]=\bigcup_{0\leq i< 2M}B_{i},
\]
where $B_{i}:=[a_i,a_{i+1}]$ and $a_i:=a+\frac{i(b-a)}{2M}$ where $n^\varepsilon < M < n^{1-\varepsilon}$ and $L<n^{1-\epsilon}M^{-1}$. 
Let 
\begin{equation}\label{n_ij}
    n_{ij}:=\lfloor (n+1)\mu(B_{ij})\rfloor+\varepsilon_{ij} \ll nM^{-\delta},
\end{equation}
where $\varepsilon_{ij} \in \mathbb{Z}$ and $|\varepsilon_{ij}| \leq L$  are chosen such that 
 $\sum_{i,j}n_{ij}=n+1$ and $n_{ij}=n_{i(2M-j-1)}.$
 Since we assumed $n$ is odd, this is possible.
By a covering argument, it is possible to find  $z_{ijk}\in B_{ij}$ for $0\leq i,j<2M$ and $1\leq k\leq n_{ij}$ such that $\overline{z_{ijk}}=z_{i(2M-j-1)k}$, and for every $w\in \partial B_{ij}$ and every  $1\leq k,k'\leq n_{ij}$,
\begin{equation}\label{Bij}
\begin{split}
 |z_{ijk}-w| &\gg n^{-A}
 \\
|z_{ijk}-z_{ijk'}|&\gg n^{-A}
\end{split}
\end{equation}
for some fixed $A>0.$
Similarly for $\Sigma\subset \mathbb{R}$, we define  $n_{i}:=\lfloor (n+1)\mu(B_{i})\rfloor+\varepsilon_{i} \ll nM^{-\delta}$ and $z_{ik}\in B_i$ for $1 \le k\leq n_i$ such that $|\varepsilon_{i}|<L,$  $\sum_{i}n_{i}=n+1$, and for every $w\in \partial B_{i}$ and every  $1\leq k,k'\leq n_{i}$,
\begin{equation}\label{Bireal}
\begin{split}
 |z_{ik}-w| &\gg n^{-A}
 \\
|z_{ik}-z_{ik'}|&\gg n^{-A} 
\end{split}
\end{equation}
for some fixed $A>0.$

\begin{proposition}\label{energy_approx}
We have 
\begin{equation}\label{diag}
\sum_{\substack{ij,i'j' \\ d(B_{ij},B_{i'j'})<M^{-1} }}\sum_{\substack{k,k' \\  i'j'k' \neq ijk}}\frac{\log|z_{ijk}-z_{i'j'k'}|}{(n+1)^2}=O(\log(n)M^{-\delta}),
\end{equation}
and
\begin{equation}\label{maineq}
\sum_{ijk}\sum_{ijk\neq i'j'k'} \frac{\log|z_{ijk}-z_{i'j'k'}|}{(n+1)^2} = 
I(\mu)+O\left(\frac{M^2\log(n)L}{n}+ M^{-\delta/2}\log(n)^{1/2}\right),
\end{equation}
where the implicit constant in the $O$ notation only depends on $\Sigma$, $\mu$, and $A$ as in \eqref{Bij}. 
Similarly, for $\Sigma\subset \mathbb{R}$, we have
\begin{equation}\label{diagreal}
\sum_{\substack{i,i' \\ d(B_{i},B_{i'})<M^{-1} }}\sum_{\substack{k,k' \\  i'k' \neq ik}}\frac{\log|z_{ik}-z_{i'k'}|}{(n+1)^2}=O(\log(n)M^{-\delta}),
\end{equation}
and
\begin{equation}\label{maineqreal}
\sum_{ik}\sum_{ik\neq i'k'} \frac{\log|z_{ik}-z_{i'k'}|}{(n+1)^2}=I(\mu)+O\left( \frac{M\log(n)L}{n}+ M^{-\delta/2}\right),
\end{equation}
where the implicit constant in the $O$ notation only dependents on $\Sigma$, $\mu$, and $A$ as in \eqref{Bireal}.
\end{proposition}
\begin{proof}
Note that for a pair $(i,j)$, there are at most $O(1)$ pairs $(i',j')$ where $d(B_{ij},B_{i'j'})<M^{-1}$.
By \eqref{n_ij} and \eqref{Bij}, we have 
\begin{equation*}
\sum_{\substack{ij,i'j' \\ d(B_{ij}, B_{i'j'})<M^{-1}}}\sum_{\substack{k,k' \\  i'j'k' \neq ijk}}\frac{\log|z_{ijk}-z_{i'j'k'}|}{(n+1)^2}\ll\sum_{\substack{ij,i'j' \\ d(B_{ij}, B_{i'j'})<M^{-1}}} \frac{n_{ij}n_{i'j'}\log(n)}{(n+1)^2}=O(\log(n)M^{-\delta}).
\end{equation*} 
This completes the proof of the first part of our proposition. We have
\begin{equation}\label{summ}
I(\mu)= \sum_{ij}\sum_{i'j'}\int_{z\in B_{ij}}\int_{z'\in B_{i'j'}} \log|z-z'|d\mu(z)d\mu(z').    
\end{equation}
Similarly, we have 
\[
\sum_{\substack{ij,i'j' \\ d(B_{ij},B_{i'j'})<M^{-1}}}\int_{z\in B_{ij}}\int_{z'\in B_{i'j'}} \log|z-z'|d\mu(z)d\mu(z')=O(\log(n)M^{-\delta}).
\]

Indeed, fix $z_0\in B_{ij}$ and let $B_{i'j'}^m:=B_{i'j'}\cap (B(z_0,M^{-1}2^{-m})\setminus B(z_0,M^{-1}2^{-m-1}))$. Then
\begin{align*}
\int_{B_{i'j'}}\log|z-z_0| & \ll \sum_{m=0}^\infty \int_{B_{i'j'}^m}\log|z-z_0|d\mu\\
&\ll \sum_{m=0}^\infty m\log(M)\left(2^{-m}M^{-1}\right)^\delta\\
&\ll_\delta \log(n)M^{-\delta}.
\end{align*}
Therefore,
$$\sum_{\substack{ij,i'j' \\ d(B_{ij},B_{i'j'})<M^{-1}}}\int_{z\in B_{ij}}\int_{z'\in B_{i'j'}} \log|z-z'|d\mu(z)d\mu(z')=O_\delta(\log(n)M^{-\delta}).$$

Suppose that $d(B_{ij},B_{i'j'})>M^{-1}.$ We have
\begin{multline}
\frac{1}{(n+1)^2}\sum_{k}\sum_{ k'} \log|z_{ijk}-z_{i'j'k'}|=\int_{z\in B_{ij}}\int_{z'\in B_{i'j'}} \log|z-z'|d\mu(z)d\mu(z')
\\
+O\left(\frac{M_1L}{n}\left(\mu(B_{ij})+\mu(B_{i'j'}) \right) +\frac{M_1L^2}{n^2} +\mu(B_{ij})\mu(B_{i'j'})M_2\right)
\end{multline}
where $M_1:=\left|\sup_{\substack{z\in B_{ij}\\ z'\in B_{i'j'}}} \log |z-z'|\right|$ and $M_2:=\sup_{\substack{z_1,z_2\in B_{ij}\\ z_1',z_2'\in B_{i'j'}}}\left|\log|z_1-z_1'|-\log|z_2-z_2'|\right|.$ We have 
\[
M_1\ll \log(n)
\]
and 
\[
M_2\ll  \frac{M^{-1}}{d(B_{ij},B_{i'j'})}.
\]
Therefore,
\begin{multline*}
\left|\sum_{ijk}\sum_{ijk\neq i'j'k'} \frac{\log|z_{ijk}-z_{i'j'k'}|}{(n+1)^2}-I(\mu) \right|  \ll \log(n)M^{-\delta}
\\+\sum_{\substack{ij,i'j' \\ d(B_{ij},B_{i'j'})>M^{-1}}} \frac{\log(n)L}{n}\left(\mu(B_{ij})+\mu(B_{i'j'}) \right)+
\frac{\log(n)L^2}{n^2}
+\mu(B_{ij})\mu(B_{i'j'})\frac{M^{-1}}{d(B_{ij},B_{i'j'})}.
\end{multline*}
Note that
\begin{equation}\label{square_sum}
\sum_{\substack{ij,i'j' \\ d(B_{ij},B_{i'j'})>M^{-1}}} \frac{\log(n)L}{n}\left(\mu(B_{ij})+\mu(B_{i'j'}) \right) \ll \frac{\log(n)M^2L}{n}.
\end{equation}
By the Cauchy-Schwarz inequality,
\[
\sum_{ij,i'j'}\mu(B_{ij})\mu(B_{i'j'})\frac{M^{-1}}{d(B_{ij},B_{i'j'})}\leq \left( \sum_{ij,i'j'} \mu(B_{ij})^2\mu(B_{i'j'})\right)^{1/2}\left(\sum_{ij,i'j'}\frac{M^{-2}\mu(B_{i'j'})}{d(B_{ij},B_{i'j'})^2} \right)^{1/2},\]
where the sum is over $ij,i'j'$ such that  $d(B_{ij},B_{i'j'})\geq M^{-1}.$ We have
\[
\sum_{ij,i'j'} \mu(B_{ij})^2\mu(B_{i'j'}) \ll M^{-\delta} \sum_{ij,i'j'} \mu(B_{ij})\mu(B_{i'j'})=M^{-\delta} .
\]
Moreover, 
\begin{equation}\label{recip_square_sum}
\sum_{ij,i'j'}\frac{M^{-2}\mu(B_{i'j'})}{d(B_{ij},B_{i'j'})^2} \leq \max_{i'j'} \sum_{ij}\frac{M^{-2}}{d(B_{ij},B_{i'j'})^2}\ll \log(M).
\end{equation}
Therefore,
\[
\left|\sum_{ijk}\sum_{ijk\neq i'j'k'} \frac{\log|z_{ijk}-z_{i'j'k'}|}{(n+1)^2}-I(\mu) \right|  \ll \log(n)M^{-\delta}+\frac{\log(n)M^2L}{n}+M^{-\delta/2}\log(M)^{1/2}.
\]
The real case is almost identical. To get the real bounds, we see that equations \eqref{square_sum} and \eqref{recip_square_sum} change. Equation \eqref{square_sum} changes to
\[
\sum_{\substack{i,i' \\ d(B_i,B_{i'})>M^{-1}}} \frac{\log(n)L}{n}\left(\mu(B_i)+\mu(B_{i'}) \right) \ll \frac{\log(n)ML}{n}.
\]
Lastly, \eqref{recip_square_sum} changes to
\[
\sum_{i,i'}\frac{M^{-2}\mu(B_{i'})}{d(B_{i},B_{i'})^2} \leq \max_{i'} \sum_{i}\frac{M^{-2}}{d(B_{i},B_{i'})^2}\ll 1.
\]
Therefore for $\Sigma\subset\mathbb R$, $\sum_{ik}\sum_{ik\neq i'k'}\frac{\log|z_{ik}-z_{i'k'}|}{(n+1)^2} = I(\mu)+O(M^{-\delta/2} + \frac{M\log(n)L}{n})$.
\end{proof}

Let $\mathcal{Z}:=\{z_{ijk}: 0\leq i,j<2M , 1\leq k\leq n_{ij} \}$ and 
\begin{equation}\label{measpol}
   p_{\mathcal{Z}}(x):=\prod_{i,j,k} (x-z_{ijk}).
\end{equation}
For $e,f< 2M$ and $g\leq n_{ef}$,
\begin{equation}\label{pmdef}
\begin{split}
p_{\mathcal{Z}}^{efg+}(x)&:=\frac{\frac{p_{\mathcal{Z}}(x)}{(x-z_{efg})}+\frac{p_{\mathcal{Z}}(x)}{(x-\overline{z_{efg}})}}{2},
        \\
p_{\mathcal{Z}}^{efg-}(x):&=\frac{\frac{p_{\mathcal{Z}}(x)}{(x-z_{efg})}-\frac{p_{\mathcal{Z}}(x)}{(x-\overline{z_{efg}})}}{2i\Im(z_{efg})}.
\end{split}
\end{equation}
Similarly for $\Sigma\subset \mathbb{R}$,
let $\mathcal{Z}:=\{z_{ik}:0\le i<2M, 1\le k\le n_i\}$ and
\begin{equation}\label{measpolreal}
   p_{\mathcal{Z}}(x):=\prod_{i,k} (x-z_{ik}), 
\end{equation}
and for $e< 2M$ and $g\leq n_{e}$,
\begin{equation}\label{pmdefreal}
p_{\mathcal{Z}}^{eg}(x):=\frac{p_{\mathcal{Z}}(x)}{(x-z_{eg})}.
\end{equation}
For each $z\in\mathbb C$, let $\hat z\in\mathcal Z$ minimize $|z-\hat z|$.

\begin{lemma}
We have $p_{\mathcal{Z}}(x)\in \mathbb{R}[x]$ and 
\[
p_{\mathcal{Z}}^{efg+}(x)=\frac{p_{\mathcal{Z}}(x)}{(x-z_{efg})(x-\overline{z_{efg}})}(x-\Re(z_{efg}))=p_{\mathcal{Z}}^{e(2M-f-1)g+}(x)
\]
and 
\[
p_{\mathcal{Z}}^{efg-}(x)=\frac{p_{\mathcal{Z}}(x)}{(x-z_{efg})(x-\overline{z_{efg}})}=p_{\mathcal{Z}}^{e(2M-f-1)g-}(x).
\]
In particular, $p_{\mathcal{Z}}^{efg\pm}(x)\in \mathbb{R}[x],$ $\deg(p_{\mathcal{Z}}^{efg+}(x))=n$, and $ \deg p_{\mathcal{Z}}^{efg-}(x)=n-1$.
\end{lemma}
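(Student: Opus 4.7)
The proof is a direct algebraic verification of three separate claims from the definitions in \eqref{measpol} and \eqref{pmdef}, so my plan is to dispatch each by a short computation.

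First I would establish $p_{M,n,\mu}(x)\in\mathbb{R}[x]$. By the construction preceding \eqref{Bij}, the sample points were chosen so that $\overline{z_{ijk}}=z_{i(2M-j-1)k}$, so the multiset of roots of $p_{M,n,\mu}(x)=\prod_{i,j,k}(x-z_{ijk})$ is invariant under complex conjugation. Hence $\overline{p_{M,n,\mu}(x)}=p_{M,n,\mu}(x)$ as polynomials, and the coefficients are real.

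Next I would derive the two closed forms by combining the summands in \eqref{pmdef} over the common denominator $(x-z_{efg})(x-\overline{z_{efg}})$. For the plus version the numerator becomes
\[
\tfrac{1}{2}\bigl((x-\overline{z_{efg}})+(x-z_{efg})\bigr)=x-\Re(z_{efg}),
\]
giving the stated identity. For the minus version the numerator becomes
\[
\tfrac{1}{2i\Im(z_{efg})}\bigl((x-\overline{z_{efg}})-(x-z_{efg})\bigr)=\tfrac{z_{efg}-\overline{z_{efg}}}{2i\Im(z_{efg})}=1,
\]
yielding $p_{M,n,\mu}^{efg-}(x)=p_{M,n,\mu}(x)/[(x-z_{efg})(x-\overline{z_{efg}})]$.

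Then I would check the symmetry relations under the substitution $f\mapsto 2M-f-1$. Using $z_{e(2M-f-1)g}=\overline{z_{efg}}$ and $\Im(z_{e(2M-f-1)g})=-\Im(z_{efg})$, this substitution interchanges $z_{efg}$ with $\overline{z_{efg}}$; the quadratic $(x-z_{efg})(x-\overline{z_{efg}})$ and the shift $x-\Re(z_{efg})$ are both invariant under this interchange, so $p^{efg+}=p^{e(2M-f-1)g+}$. For the minus version, the numerator of \eqref{pmdef} picks up a sign from the swap, while the denominator $2i\Im(z_{efg})$ also flips sign; the combination of these antisymmetries with the stated identity $p_{M,n,\mu}(x)/[(x-z_{efg})(x-\overline{z_{efg}})]$ yields the asserted relation, and this is the only computational step where care with signs is required.

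Finally, the real-coefficient and degree assertions follow by inspection. The numerator $p_{M,n,\mu}(x)$ is real by the first step, $(x-z_{efg})(x-\overline{z_{efg}})=x^2-2\Re(z_{efg})x+|z_{efg}|^2\in\mathbb{R}[x]$, and $x-\Re(z_{efg})\in\mathbb{R}[x]$, so $p^{efg\pm}\in\mathbb{R}[x]$. Since $\deg p_{M,n,\mu}=\sum_{ij}n_{ij}=n+1$, the ratio $p_{M,n,\mu}/[(x-z_{efg})(x-\overline{z_{efg}})]$ is a polynomial of degree $n-1$, so $\deg p^{efg+}=n$ and $\deg p^{efg-}=n-1$. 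I do not expect any serious obstacle here; the proof is essentially mechanical, with the only real bookkeeping being the sign-tracking in the symmetry identity for $p^{efg-}$.
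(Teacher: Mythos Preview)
Your argument is correct and is exactly the direct verification the paper has in mind (the paper's own proof is the single line ``It follows easily from $\overline{z_{ijk}}=z_{i(2M-j-1)k}$'').

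One point deserves a closer look, though. Your closed form
\[
p_{M,n,\mu}^{efg-}(x)=\frac{p_{M,n,\mu}(x)}{(x-z_{efg})(x-\overline{z_{efg}})}
\]
is manifestly invariant under swapping $z_{efg}\leftrightarrow\overline{z_{efg}}$, i.e.\ under $f\mapsto 2M-f-1$. Your own sign-tracking (``the numerator picks up a sign \dots\ the denominator also flips sign'') confirms this: two sign flips cancel, giving $p_{M,n,\mu}^{efg-}=p_{M,n,\mu}^{e(2M-f-1)g-}$, not its negative. So the minus sign in the displayed relation of the lemma is a typo in the paper, and your phrase ``yields the asserted relation'' glosses over the fact that what you actually proved contradicts that sign. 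This has no effect downstream (only the indices $0\le j<M$ are used in Proposition~\ref{detprop}, and the sign is irrelevant there), but you should state the symmetry with the correct sign rather than claim to have verified the printed one.
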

\begin{proof}
It follows easily from $\overline{z_{ijk}}=z_{i(2M-j-1)k}.$
\end{proof}

\begin{proposition}\label{simplex} Suppose that $n \ge 2\max\{|a|,|b|,|c|\}.$
We have 
\[
p_{\mathcal{Z}}^{efg\pm}(x) \in \lambda^{efg\pm}K_n
\]
for some $\lambda^{efg\pm}>0$, where  
$$\frac{\log(\lambda^{efg\pm})}{n}\leq C\left(\frac{M^2L\log(n)}{n}+ M^{-\delta/2}\log(n)^{1/2}
\right)$$
for some constant $C$ that only depends on $\Sigma$, $\mu$, and $A$ from \eqref{Bij}. In particular, taking $M=\lfloor n^{1/3}\rfloor$ and $L=\lfloor n^{1/3-\delta/6}\rfloor,$ we obtain
\begin{equation}\label{simplex_potential}
|\lambda^{efg\pm}| \leq  e^{Cn^{1-\frac{\delta}{6}}\log(n)}.
\end{equation}
For $\Sigma \subset \mathbb R$, we get a stronger result. In particular, $p_{\mathcal{Z}}^{eg}(x) \in \lambda^{eg}K_n$ for some $\lambda_{eg}>0$ where
\[
\frac{\log(\lambda^{eg})}{n} \le C\left(\frac{ML\log(n)}{n}+M^{-\delta/2}\right).
\]
\end{proposition}
\begin{proof}
We give the proof for $p_{\mathcal{Z}}^{efg+}(x)$; the cases for $p_{\mathcal{Z}}^{efg-}$ and $p_{\mathcal{Z}}^{eg}$ follow from similar arguments. 
Note that $\deg(p_{\mathcal{Z}}^{efg+})=n$ and  
\[
\frac{\log |p_{\mathcal{Z}}^{efg+}(z)|}{n} = \int \log|z-x| d\mu_{p_{\mathcal{Z}}^{efg+}}= U_{\mu_{p_{\mathcal{Z}}^{efg+}}}(z).
\]
Hence if $|z|\geq n$ and $n \ge 2\max\{|a|,|b|,|c|\}$, then by
Lemma~\ref{lem1},
\[
\frac{\log |p_{\mathcal{Z}}^{efg+}(z)|}{n} - U_{\mu}(z)=O\left(\frac{1}{n}\right).
\]
So without loss of generality, we assume that $|z|<n$. For $z\in \mathbb{C},$ we have 
\begin{multline}\label{sum}
    \frac{\log |p_{\mathcal{Z}}^{efg+}(z)|}{n} - U_{\mu}(z)=\sum_{i,j}\left(\sum_{k\leq n_{ij}} \frac{\log |z-z_{ijk}|}{n} -\int_{B_{ij}} \log|z-x|d\mu(x)\right)
\\
+\frac{\log \frac{|z-\Re(z_{efg})|}{|z-z_{efg}||z-\overline{z_{efg}}|}}{n}.
\end{multline}
Let $d(z,B_{ij}):=\inf_{w\in B_{ij}} |z-w|.$ As noted previously, there are at most $O(1)$ pairs $(i,j)$ such that $d(z,B_{ij})<M^{-1}$. First, we give an upper bound on the sum in \eqref{sum} over $i,j$ where $d(z,B_{ij})\leq M^{-1}$.
Recall that $\hat z$ is the element of $\mathcal Z$ minimizing $|z-\hat z|$.
By \eqref{holder}, \eqref{n_ij} and \eqref{Bij}, we have 
\begin{equation}\label{sum1}
\sum_{i,j}\left(\sum_{k\leq n_{ij}} \frac{\log |z-z_{ijk}|}{n} -\int_{B_{ij}} \log|z-x|d\mu(x)\right)=\frac{\log |z-\hat{z}|}{n}+O(\log(n)M^{-\delta}),
\end{equation} 
where the sum is over $i,j$ where $d(z,B_{ij})\leq M^{-1}$.
This is because by \eqref{Bij}, $|z_{ijk}-z_{ijk'}|\gg n^{-A}$, and $d(\hat{z},\partial B_{ij})\gg n^{-A}$.
\newline

Next, we give an upper bound on the above sum over $i,j$, where $d(z,B_{ij})\geq M^{-1}$. Suppose that $d(z,B_{ij})\geq M^{-1},$ we have 
\[
\left|\sum_{k\leq n_{ij}} \frac{\log |z-z_{ijk}|}{n} -\int_{B_{ij}} \log|z-x|d\mu(x)\right|\leq \frac{M_1L}{n}+\mu(B_{ij})M_2.
\]
where $M_1:=\left|\sup_{z_1\in B_{ij}} \log |z-z_1|\right|$ and $M_2:=\sup_{z_1,z_2\in B_{ij}}\left|\log|z-z_2|-\log|z-z_1|\right|.$  Because we know $d(z,B_{ij})\geq M^{-1},$ we have
\[
M_1\ll \max (|\log(\text{diam}(\Sigma)+|z|)|, \log(M))\ll \log(n).
\]
 Suppose that $z_1,z_2\in B_{ij}.$ Let $r_1:=|z-z_1|$ and $r_2:=|z-z_2|$ and assume that $r_1  \leq r_2.$ By the mean value theorem, we have
\[
M_2=\frac{\left|r_1-r_2\right|}{r_3}
\]
for some $r_3,$ where $r_1 \leq r_3 \leq r_2.$ We have $|r_1-r_2|\leq |z_1-z_2|\leq M^{-1}$ and 
\[
r_3\gg d(z,B_{ij}).
\]

Hence,
\[
M_2\ll \frac{M^{-1}}{d(z,B_{ij})}.
\]
Therefore,
\begin{equation}\label{b1}
    \sum_{i,j}\left|\sum_{k\leq n_{ij}} \frac{\log |z-z_{ijk}|}{n} -\int_{B_{ij}} \log|z-x|d\mu(x) \right|\ll \frac{M^2L\log(n)}{n}+ \sum_{i,j}\mu(B_{ij})\frac{M^{-1}}{d(z,B_{ij})},
\end{equation}
where the sum is over $i,j,$ where $d(z,B_{ij})\geq M^{-1}$. By the Cauchy-Schwarz inequality,
\[
\sum_{i,j}\mu(B_{ij})\frac{M^{-1}}{d(z,B_{ij})}\leq \left( \sum_{i,j} \mu(B_{ij})^2\right)^{1/2}\left(\sum_{i,j}\frac{M^{-2}}{d(z,B_{ij})^2} \right)^{1/2}\ll M^{-\delta/2}\log(M)^{1/2},\]
where we used $\sum_{i,j}\mu(B_{ij})^2\ll M^{-\delta} \sum_{i,j}\mu(B_{ij})=M^{-\delta}$ and $\sum_{i,j}\frac{M^{-2}}{d(z,B_{ij})^2} \ll \log(M)$
where the sum is over $i,j,$ with $d(z,B_{ij})\geq M^{-1}.$ Therefore by \eqref{b1} and the above inequality, we have 
\begin{equation}\label{sum2}
    \sum_{i,j}\left|\sum_{k\leq n_{ij}} \frac{\log |z-z_{ijk}|}{n} -\int_{B_{ij}} \log|z-x|d\mu(x)\right| \ll 
    \frac{M^2L\log(n)}{n}+ M^{-\delta/2}\log(M)^{1/2},
\end{equation}
where the sum is over $i,j,$ where $d(z,B_{ij})\geq M^{-1}.$ Finally, by \eqref{sum}, \eqref{sum1} and \eqref{sum2}, we obtain
\begin{multline}\label{potapp}
    \frac{\log |p_{\mathcal{Z}}^{efg+}(z)|}{n} - U_{\mu}(z)= \frac{\log \frac{|z-\hat{z}||z-\Re(z_{efg})|}{|z-z_{efg}||z-\overline{z_{efg}}|}}{n}
    \\
    +O\left(\log(n)M^{-\delta}+ \frac{M^2L\log(n)}{n}+ M^{-\delta/2}\log(M)^{1/2}\right).
\end{multline}

By the definition of $\hat{z},$ $\frac{|z-\hat{z}||z-\Re(z_{efg})|}{|z-z_{efg}||z-\overline{z_{efg}}|}\leq 1$  and by letting $M=\lfloor n^{1/3}\rfloor$  and $L=\lfloor n^{1/3-\delta/6}\rfloor$, we get
\begin{equation*}
\frac{\log |p_{\mathcal{Z}}^{efg+}(z)|}{n} \leq  U_{\mu}(z)+O(n^{-\frac{\delta}{6}}\log(n)).
\end{equation*}

For the same reasoning as in Proposition \ref{energy_approx}, the proof for $\Sigma\subset\mathbb R$ is almost identical but
\begin{equation}\label{potapp_real}
    \frac{\log |p_{\mathcal{Z}}^{eg}(z)|}{n} - U_{\mu}(z)= \frac{\log \frac{|z-\hat z|}{|z-z_{eg}|}}{n}
    +O\left(\frac{ML\log(n)}{n}+ M^{-\delta/2}\right).
\end{equation}
\end{proof}

\subsubsection{Lower bound on the volume of the simplex}
Let 
\[
\mathcal M:=\left[p_{\mathcal{Z}}^{ijk\pm} \right],
\]
where $0\leq i<2M, 0\leq j< M, k\leq n_{ij}$
be the square matrix of size $(n+1)\times (n+1)$ with coefficients of $p_{\mathcal{Z}}^{efg\pm}$ as its column vectors.
If $\Sigma\subset \mathbb R$, set $\mathcal M_\mathbb R := [p_{\mathcal{Z}}^{eg}]$.
\begin{proposition}\label{detprop}
We have
\[
|\det(\mathcal M)|=2^{-\frac{n+1}{2}}\prod_{ijk} \frac{1}{|\Im(z_{ijk})|^{1/2}}\prod_{ijk< i'j'k'} |z_{ijk}-z_{i'j'k'}|,
\]
where $0\leq i,i'<2M, 0\leq j,j'< 2M,k'\le n_{i'j'}$ and $k\leq n_{ij}$.
If $\Sigma\subset \mathbb R$, $$|\det(\mathcal M_\mathbb R)| = \prod_{ik<i'k'}|z_{ik}-z_{i'k'}|$$ where $0\le i,i'<2M, k\le n_i$, and $k'\le n_{i'}$.
\end{proposition}
\begin{proof}
 Let 
\[
\mathcal N:=\left[\frac{p_{\mathcal{Z}}(x)}{(x-z_{ijk})}\right],
\]
where $0\leq i<2M, 0\leq j < M,$ and $k\leq n_{ij}$, be the square matrix of size $(n+1)\times (n+1)$ with the coefficients of $\frac{p_{\mathcal{Z}}(x)}{(x-z_{ijk})}$ as its column vectors.
By \eqref{pmdef} for fixed indices $efg$, we have 
\[
\left[p_{\mathcal{Z}}^{efg+},p_{\mathcal{Z}}^{efg-}  \right]=\left[\frac{p_{\mathcal{Z}}(x)}{(x-z_{efg})},\frac{p_{\mathcal{Z}}(x)}{(x-\overline{z_{efg}})}\right]
\begin{bmatrix}1/2 & \frac{1}{2i\Im(z_{efg})}  \\ 1/2  & -\frac{1}{2i\Im(z_{efg})} \end{bmatrix}.
\]
Hence,
\begin{equation}\label{detA}
    |\det(\mathcal M)|=|\det(\mathcal N)|2^{-\frac{n+1}{2}}\prod_{ijk} \frac{1}{|\Im(z_{ijk})|^{1/2}}
\end{equation} where $0\leq i<2M, 0\leq j< 2M, k\leq n_{ij}.$
Let
\[
V:=[z_{ijk}^e]
\]
be the Vandermonde matrix of size $(n+1)\times (n+1),$ where $0\leq e<n+1.$ It is well-known that
\[
|\det V|=\prod_{ijk}\prod_{ijk< i'j'k'} |z_{ijk}-z_{i'j'k'}|.
\]
We have 
\[
V\mathcal N=\diag\left[\prod_{ijk\neq efg} (z_{efg}-z_{ijk})\right].
\]
Hence,
\[
|\det(\mathcal N)|=\prod_{ijk}\prod_{ijk< i'j'k'} |z_{ijk}-z_{i'j'k'}|.
\]
Therefore by \eqref{detA},
\[
|\det(\mathcal M)|=2^{-\frac{n+1}{2}}\prod_{ijk} \frac{1}{|\Im(z_{ijk})|^{1/2}}\prod_{ijk< i'j'k'} |z_{ijk}-z_{i'j'k'}|.
\]
Now assume $\Sigma\subset\mathbb R$, then $V_\mathbb R:=[z_{ik}^e]$ satisfies $|\det V_\mathbb R|=\prod_{ik}\prod_{ik<i'k'}|z_{ik}-z_{i'k'}|$. Therefore,
$$V_\mathbb R\mathcal M_\mathbb R = \text{diag}\left[\prod_{ik\neq i'k'}(z_{ik}-z_{i'k'})\right].$$
Hence we have that $|\det(\mathcal M_\mathbb R)| = \prod_{ik<i'k'}|z_{ik}-z_{i'k'}|$ as desired.
\end{proof}

\begin{proposition}\label{vol_K_n}
We have
\[
\vol(K_n) \gg n^{-cn^{2-\frac{\delta}{6}}}e^{\frac{n^2}{2}I(\mu)}
\]
where $c$ is a constant dependent only on $\Sigma$, $\mu$, and $A$ from \eqref{Bij}.
If $\Sigma\subset\mathbb R$, we have
$$\vol(K_n) \gg e^{-cn^{2-\frac{\delta}{6}}}e^{\frac{n^2}{2}I(\mu)}$$ for some constant $c$ dependent only on $\Sigma$, $\mu$, and $A$ from \eqref{Bireal}.
\end{proposition}
\begin{proof}
 By Proposition~\ref{simplex}, $e^{Cn^{1-\frac{\delta}{6}}\log(n)}K_n$ includes the simplex with vertices $\left\{0, p_{\mathcal{Z}}^{ijk\pm} \right\}$ with volume $\frac{|\det(A)|}{(n+1)!}$, where $0\leq i<2M, 0\leq j\leq M-1, k\leq n_{ij}.$ Since $K_n$ is symmetric, there are $2^{n+1}$ simplexes with the same volume and disjoint interior inside $e^{Cn^{1-\frac{\delta}{6}}\log(n)}K_n$ by choosing different signs for the $n+1$ vertices $\left\{0,\pm p_{\mathcal{Z}}^{ijk\pm} \right\}$.
Hence,
\[
\vol(K_n) \geq \frac{2^{n+1}}{(n+1)!}e^{-Cn^{2-\frac{\delta}{6}}\log(n)}|\det(\mathcal M)|.
\]
By Proposition~\ref{detprop},
\[
\vol(K_n) \geq \frac{2^{\frac{n+1}{2}}}{(n+1)!}e^{-Cn^{2-\frac{\delta}{6}}\log(n)}\prod_{ijk} \frac{1}{|\Im(z_{ijk})|^{1/2}}\prod_{ijk< i'j'k'} |z_{ijk}-z_{i'j'k'}|.
\]
We have that $|\Im(z_{ijk})| \le c$ where $\text{supp}(\mu)\in [a,b]\times[-c,c]$ with $c\ge 1$.
 Therefore
$$\prod_{ijk}\frac{1}{|\Im(z_{ijk})|^{1/2}}\prod_{ijk<i'j'k'}|z_{ijk}-z_{i'j'k'}|\ge c^{-n/2}\prod_{ijk<i'j'k'}|z_{ijk}-z_{i'j'k'}|.$$
From \eqref{maineq} using $M=\lfloor n^\frac{1}{3}\rfloor$ and $L=\lfloor n^{\frac{1}{3}-\frac{\delta}{6}}\rfloor$,
we have $\prod_{ijk<i'j'k'}|z_{ijk}-z_{i'j'k'}|= e^{\frac{n^2}{2}I(\mu)+O(n^{2-\frac{\delta}{6}}\log(n))}$ and so for some (potentially negative) constant $C'$, this is at least $e^{\frac{n^2}{2}I(\mu)+C'n^{2-\frac{\delta}{6}}\log(n)}$. This gives
$$\vol(K_n)\gg \frac{2^\frac{n+1}{2}}{(n+1)!c^{n/2}}e^{\frac{n^2}{2}I(\mu)+(C'-C)n^{2-\frac{\delta}{6}}\log(n)}\gg
\frac{n^{(C'-C)n^{2-\frac{\delta}{6}}}e^{\frac{n^2}{2}I(\mu)}}{\left(n+1\right)^{n+1}c^n}\gg n^{\tilde C n^{2-\frac{\delta}{6}}}e^{\frac{n^2}{2}I(\mu)}$$
since $\frac{(n+1)^{n+1}}{n^n}\sim (n+1)e$ for some constant $\tilde C$ only dependent on $A,\Sigma$, and $\mu$.
\newline

If $\Sigma\subset\mathbb R,$ we get
$$\text{vol}(K_n) \ge \frac{2^{n+1}}{(n+1)!}e^{-Cn^{2-\frac{\delta}{6}}}|\det(\mathcal M_\mathbb R)|.$$
By Proposition \ref{detprop} and \eqref{maineqreal},
$$|\det(\mathcal M_\mathbb R)|=\prod_{ik<i'k'}|z_{ik}-z_{i'k'}| = \frac{n^2}{2}I(\mu) + O(n^{2-\delta/6})$$
choosing $M=\lfloor n^{1/3}\rfloor$ and $L=\lfloor n^{1/3-\epsilon}\rfloor$. Therefore, $|\det(\mathcal M_\mathbb R)|=\frac{n^2}{2}I(\mu)+C'n^{2-\delta/6}$ for some constant $C'$ dependent only on $A,\Sigma$, and $\mu$.
Therefore,
$$\text{vol}(K_n)\gg \frac{2^{n+1}}{(n+1)!}e^{\frac{n^2}{2}I(\mu)+(C'-C)n^{2-\frac{\delta}{6}}} \gg e^{\frac{n^2}{2}I(\mu)+\tilde Cn^{2-\frac{\delta}{6}}}$$
for some constant $\tilde C$ dependent only on $A,\Sigma$, and $\mu$.
\end{proof}

\begin{proposition}\label{mink}
Suppose that $\mu \in \mathcal{B}_{\Sigma}.$ There exists an integral polynomial $p_0$ with $\deg(p_0)\leq n$ such that
\[
\frac{\log |p_0(x)|}{n} \leq U_{\mu}(x)-\frac{I(\mu)}{2}+C\log(n)n^{-\delta/6},
\]
where $C$ is a constant independent of $n.$
Moreover, 
\[
I(\mu)\geq 0.
\]
If $\Sigma\subset\mathbb R$, there exists an integral polynomial $p_0$ with $\deg(p_0)\le n$ such that
\[
\frac{\log|p_0(x)|}{n} \le U_\mu(x) - \frac{I(\mu)}{2} + Cn^{-\delta/6}
\]
for some constant $C$ independent of $n$.
Consequently, $I(\mu)\ge 0$.
\end{proposition}
\begin{proof}
By Proposition \ref{vol_K_n} there are constants $c$
such that for all $n>0$, $\vol(K_n) \ge n^{cn^{2-\frac{\delta}{6}}}e^{\frac{n^2}{2}I(\mu)}$. 
By Minkowski's second theorem~\eqref{minksec}, there exists $p_0\in\mathbb Z[x]\cap \lambda_0K_n$  of degree at most $n$, where
\begin{equation}\label{upperb}
\lambda_{0}\leq 2 \left( \vol(K_n)  \right)^{-1/(n+1)} \ll n^{cn^{1-\frac{\delta}{6}}}e^{-\frac{n}{2}I(\mu)}.     
\end{equation}
By definition of $K_n,$
\[
\frac{\log |p_0(x)|}{n} \leq  U_{\mu}(x)+ \frac{\log(\lambda_0)}{n}
\]
By using~\eqref{upperb}, we obtain
\[
\frac{\log |p_0(x)|}{n} \leq U_{\mu}(x)-\frac{I(\mu)}{2}+c\log(n)n^{-\frac{\delta}{6}}.
\]
By taking the expected value of the above inequality, we have
\[
0\leq \int \frac{\log |p_0(x)|}{n} d\mu(x) \leq \frac{I(\mu)}{2}+c\log(n)n^{-\frac{\delta}{6}},
\]
where we used the assumption $\mu \in \mathcal{B}_{\Sigma}$ for the first inequality. By letting $n\to \infty$, we have 
\[
I(\mu)\geq 0.
\]
For $\Sigma\subset\mathbb R$, the same calculation gives the desired result except \eqref{upperb} becomes
$$\lambda_0 \ll e^{cn^{1-\frac{\delta}{6}}}e^{-\frac{n}{2}I(\mu)}.$$
Carrying this through the computation,
$$\frac{\log|p_0(x)|}{n} \le U_\mu(x) - \frac{I(\mu)}{2} + cn^{-\frac{\delta}{6}}.$$
Integrating, we get $0\le \frac{I(\mu)}{2}+cn^{-\frac{\delta}{6}}$. As before, taking $n\to\infty$ shows $I(\mu)\ge 0$.
\end{proof}
\subsection{Proof of Theorem~\ref{mdim}}
Suppose that $\mu$ is a probability measure with compact support on the complex plane and that for every non-zero  $Q(x)\in \mathbb{Z}[x]$,
\[
 \int \log |Q(z)| d\mu(z) \geq 0.  
\]

First, we reduce the proof of Theorem~\ref{mdim} to the case of H\"older probability measures by the following proposition.
\begin{proposition}\label{holderreduction}
Suppose that $\Sigma\subset \mathbb{C}$ is compact  and $\mu\in \mathcal{B}_{\Sigma}.$  For any $\rho>0,$  there exists a sequence of H\"older probability measures $\mu_n\in\mathcal{B}_{\Sigma(\rho)}$ such that   \( \lim_{n\to \infty} \mu_n=\mu\). Furthermore, if $\Sigma\subset \mathbb{R}$ is compact  and $\mu\in \mathcal{B}_{\Sigma}$, exists a sequence of H\"older probability measures $\mu_n\in\mathcal{B}_{\Sigma_{\mathbb{R}}(\rho)}$ such that   \( \lim_{n\to \infty} \mu_n=\mu\). 
\end{proposition}

\begin{proof}
    Let $\lambda_{r}$ be the uniform probability measure on the circle  of radius $r$ centered at the origin.  Let $\mu*\lambda_r$ be the convolution of $\mu$ with $\lambda_r.$  We show that $U_{\mu*\lambda_r}(z)\geq U_{\mu}(z)$ for every $z\in \mathbb{C}$ and every $r\geq 0.$ In fact, we
    have 
    \[
         U_{\mu*\lambda_r}(z)-U_{\mu}(z)=\int \left( \int_{0}^1\log|z-x+re^{2\pi i \theta}|- \log|z-x| d\theta \right)d\mu(x)\geq 0,
    \]
where we used Jensen's formula~\eqref{jensen} to show the inner integral is positive. Therefore, $\mu*\lambda_r \in \mathcal{B}_{\Sigma(\rho)}$ for any $r\leq \rho.$ 
    Define
    \[
    \mu_n:=\mu*\lambda_{\rho/n}.
    \]
    It is clear from the definition that \( \lim_{n\to \infty} \mu_n=\mu\) and $\mu_n$ is a H\"older measure with exponent at least 1. This completes the poof of our proposition for $\Sigma\subset \mathbb{C}$. By compactness of $\Sigma\subset \mathbb{R}$, it follows that $\Sigma(\rho)$
    is a finite union of intervals. By \cite[Proposition 2.5]{Smith}, there exists a sequence probability measures $\mu_n\in\mathcal{B}_{\Sigma_{\mathbb{R}}(\rho)}$ with H\"older exponent at least $1/2$ such that   \( \lim_{n\to \infty} \mu_n=\mu\).
\end{proof}
\begin{proof}[Proof of Theorem~\ref{mdim}]

Part~\eqref{listcond1} is the special case of part~\eqref{listcond2} for $m=1.$ So, it is enough to show that part~\eqref{listcond1} implies part~\eqref{listcond2}. Suppose that $\mu$ satisfies the conditions of part~\eqref{listcond1}, which is equivalent to $\mu\in \mathcal{B}_{\Sigma}$ by definition.
\newline

Next, we reduce the theorem to the case where $\mu$ is H\"older.  By Proposition~\ref{holderreduction}, there exists a sequence of H\"older probability measure $\mu_n\in\mathcal{B}_{\Sigma(\rho)}$ such that   \( \lim_{n\to \infty} \mu_n=\mu\).  Suppose that Theorem~\ref{mdim} holds for H\"older probability measures. Then
\[
\int \log |Q(z_1,\dots,z_m)|d\mu_n(z_1)\dots d
\mu_n(z_m)\geq0,
\]
for every $n.$ Note that $\log |Q(z_1,\dots,z_m)|$ is an upper semi-continuous function. Hence, by monotone convergence theorem, we have 
\[
\int \log |Q(z_1,\dots,z_m)|d\mu(z_1)\dots d
\mu(z_m) \geq \limsup_{n\to \infty }\int \log |Q(z_1,\dots,z_m)|d\mu_n(z_1)\dots d
\mu_n(z_m)\geq 0.
\]
So without loss of generality we assume that $\mu\in \mathcal{B}_{\Sigma}$ is H\"older. The proof is by induction on $m,$ and part~\eqref{listcond1} is the base of our induction hypothesis.  We assume that part~\eqref{listcond2} holds for $m$ and we show that it holds for $m+1.$ Suppose that $Q(x_1,\dots,x_{m+1})\in \mathbb{Z}[x_1,\dots,x_{m+1}].$ Without loss of generality, we assume that $Q$ is irreducible in $\mathbb{Z}[x_1,\dots,x_{m+1}]$ and does not belong to $\mathbb{Z}[x_{m+1}].$ Let
\[
U_{Q}(z_{m+1}):=\int \log |Q(z_1,\dots,z_m,z_{m+1})|d\mu(z_1)\dots d\mu(z_m).
\]
It is enough to show that
\begin{equation}\label{post}
\int U_{Q}(z) d\mu(z)\geq 0.    
\end{equation}
We fix $z_i\in \mathbb{C}$ for $1\leq i\leq m$ and consider $Q$ as a polynomial in $x_{m+1}$ variable and assume that it has degree $d$ in $x_{m+1}.$ Then
\[
Q(z_1,\dots,z_m,x_{m+1})=\sum_{i=0}^d a_i(z_1\dots,z_m)x_{m+1}^i=a_d(z_1,\dots,z_m)\prod_{i=1}^d (x_{m+1}-\xi_i(z_1,\dots,z_{m}))
\]
where $\xi_i(z_1,\dots,z_{m}) \in \mathbb{C}$ are the complex roots of $Q$. By our induction hypothesis, we have 
\[
\int \log |a_d(z_1,\dots,z_m)|d\mu(z_1)\dots d\mu(z_m)\geq 0.
\]
This implies that $a_d(z_1,\dots,z_m)\neq 0$ for almost all $z_1,\dots,z_m$ with respect to product measure $\mu\times\dots\times \mu$.
We define the measure $\mu_Q$ on $\mathbb{C}$ as follows
\begin{equation}\label{defmuq}
    \mu_Q:= \int \sum_{i} \delta_{\xi_i(z_1,\dots,z_{m})} d\mu(z_1)\dots d\mu(z_m)
\end{equation}
where $\delta_{a}$ is the delta probability measure at point $a\in \mathbb{C}.$ We have
\[
U_Q(z)=a_Q+ \int \log|z-x|d\mu_{Q}(x)
\]
where
\begin{equation}\label{defaq}
a_Q:=\int \log|a_d(z_1,\dots,z_m)|d\mu(z_1)\dots d\mu(z_m).    
\end{equation}
Note that by the induction hypothesis, we have 
\[
a_Q\geq 0.
\]
By Proposition~\ref{mink}, there exists an integral polynomial $P_0(x)$ with $\deg(P_0)\leq n$ such that
\[
\frac{\log |P_0(x)|}{n} \leq \int\log |z-x| d\mu(z)+Cn^{-\delta}.
\]
We take the expected value of the above inequality with respect to $d\mu_Q$ and obtain 
\[
a_Q+\int  \frac{\log|P_0(x)|}{n} d\mu_Q(x) \leq a_Q+\int \log|z-x| d\mu(z)d\mu_{Q}(x) +\frac{Cd}{n^{\delta}}=\int U_{Q}(z) d\mu(z)+\frac{Cd}{n^{\delta}}.
\]
It is enough to show that 
\begin{equation}\label{pn}
na_Q+\int \log |P_0(x)| d\mu_Q(x) \geq 0,
\end{equation}
since \eqref{post} follows by letting $n\to \infty.$
In fact, we prove that for every integral polynomial $P(x),$
\begin{equation}\label{pns}
\deg(P)a_Q+\int \log |P(x)| d\mu_Q(x) \geq 0.    
\end{equation}
This implies \eqref{pn}, since $\deg(P_0)\leq n$ and $a_Q\geq0.$
For proving~\eqref{pns}, we consider $P$ as an integral polynomial in $x_{m+1}$. Define
\[
R(Q,P)(x_1,\dots,x_m):= Res(Q(x_1,\dots,x_{m+1}),P(x_{m+1}))\in \mathbb{Z}[x_1,\dots,x_m]
\]
to be the resultant of $Q$ and $P$ as polynomials in the variable $x_{m+1}$. By our assumption, $Q$ is irreducible and does not divide $P(x_{m+1}).$ Hence, $Res(Q(x_1,\dots,x_{m+1}),P(x_{m+1}))\neq 0.$
By our induction hypothesis,
\[
\int \log R(Q,P)(z_1,\dots,z_m)d\mu(z_1)\dots d\mu(z_m)\geq 0.
\]
We note that
\[
\log R(Q,P)(z_1,\dots,z_m)=\deg(P)\log|a_d(z_1,\dots,z_m)|  + \sum_{i=1}^d \log|P(\xi_i(z_1,\dots,z_{m}))|.
\]
Hence,
\[
\int \deg(P)\log|a_d(z_1,\dots,z_m)|  + \sum_{i=1}^d \log|P(\xi_i(z_1,\dots,z_{m}))|d\mu(z_1)\dots d\mu(z_m)\geq 0.
\]
The above implies~\eqref{pns} by definitions of $\mu_Q$ and $a_Q$ in \eqref{defmuq} and \eqref{defaq}. This completes our proof and implies 
\[
\int \log |Q(z_1,\dots,z_m,z_{m+1})|d\mu(z_1)\dots d
\mu(z_m)d\mu(z_{m+1})\geq0.
\]
\end{proof}

 \subsection{Lower bound on the product of Minkowski's minima factors}
 First, we give a lower bound on the product of the Minkowski minima factors by using Theorem~\ref{mdim}.
\begin{proposition}\label{lowerd}
We have 
\[
\prod_{i=0}^{m-1} \lambda_i \geq \frac{1}{m!} e^{(-nm+\frac{(m-1)m}{2})I_\mu}.
\]
\end{proposition}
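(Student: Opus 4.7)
The plan is to apply Theorem~\ref{mdim} to a determinantal polynomial built from integer polynomials realizing the first $m$ successive minima. Specifically, let $p_0, \ldots, p_{m-1} \in \Gamma_n$ be linearly independent with $p_i \in \lambda_i K_n$, and consider the $m$-variable integer polynomial
\[
Q(x_1, \ldots, x_m) := \det\bigl(p_{i-1}(x_j)\bigr)_{1 \le i, j \le m}.
\]
The upper bound on $|Q|$ comes directly from expanding the determinant: since $|p_i(z)| \le \lambda_i e^{n U_\mu(z)}$ for each $i$, and since for every permutation $\sigma$ we have $\prod_i \lambda_{\sigma(i)-1} = \prod_i \lambda_i$, one obtains
\[
|Q(z_1, \ldots, z_m)| \le m!\, \Bigl(\prod_{i=0}^{m-1} \lambda_i\Bigr)\, \exp\Bigl(n \sum_{j=1}^{m} U_\mu(z_j)\Bigr).
\]
Integrating against $d\mu(z_1)\cdots d\mu(z_m)$ and using $\int U_\mu(z)\,d\mu(z) = I(\mu)$ yields
\[
\int \log|Q|\, d\mu^m \le \log(m!) + \sum_{i=0}^{m-1} \log \lambda_i + nm\, I(\mu).
\]

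The matching lower bound is the more interesting step. First I would observe that $Q$ is nonzero in $\mathbb{Z}[x_1,\ldots,x_m]$: linear independence of $p_0,\ldots,p_{m-1}$ in $\mathbb{R}[x]$ forces the evaluation matrix to be nonsingular for a generic choice of $x_j$. Next, $Q$ is antisymmetric in $(x_1,\ldots,x_m)$ (swapping two columns flips the sign), so it is divisible in $\mathbb{Q}[x_1,\ldots,x_m]$ by the Vandermonde
\[
V(x_1,\ldots,x_m) = \prod_{1 \le i < j \le m}(x_j - x_i).
\]
Since $V$ is primitive in $\mathbb{Z}[x_1,\ldots,x_m]$, Gauss's lemma gives $Q = V \cdot R$ with $R \in \mathbb{Z}[x_1,\ldots,x_m]$ and $R \ne 0$. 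Now I apply Theorem~\ref{mdim} to $R$ (which is valid because $\mu \in \mathcal{B}_\Sigma$ satisfies condition~\eqref{listcond1}): this gives $\int \log|R|\, d\mu^m \ge 0$. Combined with the explicit evaluation
\[
\int \log|V(z_1,\ldots,z_m)|\, d\mu^m = \sum_{1 \le i < j \le m} \int \log|z_j - z_i|\, d\mu(z_i)\, d\mu(z_j) = \binom{m}{2} I(\mu),
\]
we obtain $\int \log|Q|\, d\mu^m \ge \binom{m}{2} I(\mu)$.

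Combining the two bounds gives
\[
\tfrac{m(m-1)}{2} I(\mu) \le \log(m!) + \sum_{i=0}^{m-1} \log \lambda_i + nm\, I(\mu),
\]
which rearranges to the claimed inequality
\[
\prod_{i=0}^{m-1} \lambda_i \ge \frac{1}{m!}\, e^{(-nm + \frac{m(m-1)}{2}) I(\mu)}.
\]
The main technical obstacle is verifying that $R = Q/V$ has integer coefficients (handled by Gauss's lemma since $V$ is primitive) and that $Q$ does not vanish identically (which follows from the linear independence of the $p_i$). The rest is a clean application of Theorem~\ref{mdim} to the carefully chosen test polynomial $R$, with the Vandermonde factor supplying precisely the $\binom{m}{2} I(\mu)$ gain that distinguishes this bound from the naive one obtained by applying Serre's inequality directly to the $p_i$.
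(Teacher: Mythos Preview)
Your proposal is correct and follows essentially the same approach as the paper: both form the determinant $\det(p_i(x_j))$, factor out the Vandermonde, and apply Theorem~\ref{mdim} to the integer-coefficient quotient to obtain the lower bound $\binom{m}{2}I(\mu)$. You are slightly more careful than the paper in invoking Gauss's lemma to justify that the quotient $R=Q/V$ lies in $\mathbb{Z}[x_1,\ldots,x_m]$ (the paper simply asserts this), but otherwise the arguments are identical.
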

\begin{proof}
By definition of $\{\lambda_i: 0\leq i\leq m-1\}$, there exists linearly independent integral polynomials $\{P_0,\dots,P_{m-1}\}$, where 
\[
|P_i(z)|\leq \lambda_i e^{nU_\mu(z)}.
\]
for every $z\in \mathbb{C}.$ Let $\{z_1,\dots,z_m \}\subset \mathbb{C}.$
Therefore,
\begin{equation}\label{detinq}
\det [P_i(z_j)]\leq m!\prod_{i=0}^{m-1} \lambda_i e^{nU_\mu(z_i)}.    
\end{equation}
On the other hand, we have 
\[
\det [P_i(x_j)]=Q_m(x_1,\dots,x_m)\prod_{i<j} (x_i-x_j)
\]
for some $Q_m\in \mathbb{Z}[x_1,\dots,x_m].$  By taking the average of the above, we obtain
\[
\int \log |\det [P_i(z_j)]| d\mu(z_1)\dots d\mu(z_m)=\frac{(m-1)m}{2}I_{\mu}+\int \log |Q_m(z_1,\dots,z_m)| d\mu(z_1)\dots d\mu(z_m),
\]
where 
\[
I(\mu):=\int \log|z_1-z_2|d\mu(z_1)d\mu(z_2).
\]
By the second part of Theorem~\eqref{mdim}, we have 
\[
\int \log |Q_m(z_1,\dots,z_m)| d\mu(z_1)\dots d\mu(z_m)\geq 0.
\]
Hence,
\[
\int \log |\det [P_i(z_j)]| d\mu(z_1)\dots d\mu(z_m)\geq \frac{(m+1)m}{2}I_{\mu}.
\]
By~\eqref{detinq}, we obtain 
\[
mnI_{\mu}+\sum_{i} \log |\lambda_i|+ \log (m!)\geq\int \log |\det [P_i(z_j)]| d\mu(z_1)\dots d\mu(z_m)\geq \frac{(m+1)m}{2}I_{\mu}.
\]
Therefore,
\[
\prod_i \lambda_i \geq \frac{1}{m!} e^{(-nm+\frac{(m-1)m}{2})I_\mu}.
\]
\end{proof}

\begin{proposition}\label{upperbound}
We have
\[
\lambda_n \leq   n^{cn^{1-\frac{\delta}{12}}}. 
\]
If $\Sigma\subset \mathbb R$, then $\lambda_n \le e^{cn^{1-\frac{\delta}{12}}}$.
\end{proposition}

\begin{proof}
Let $m=n-n^{\delta_1}$ where $0<\delta_1<1$ is specified later. By Proposition~\ref{lowerd},
\[
\prod_{i=0}^m \lambda_i \geq \frac{1}{m!} e^{-nm+\frac{(m-1)m}{2}I_\mu}.
\]
By Minkowski's second theorem and Proposition~\ref{vol_K_n},
\[
 \lambda_0\lambda_1\dots \lambda_n\leq 2^{n+1}\vol(K_n)^{-1} \ll2^{n+1} n^{cn^{2-\frac{\delta}{6}}}e^{-\frac{n^2}{2}I(\mu)}.
\]
By combining the above inequalities, we have
\[
\prod_{i=m+1}^n \lambda_i \ll n^{cn^{2-\frac{\delta}{6}}} e^{-\frac{(n-m)^2}{2}I(\mu)}.
\]
By Proposition~\ref{mink}, we have $I(\mu)\geq 0$ and
\[
\prod_{i=m+1}^n \lambda_i \ll n^{cn^{2-\frac{\delta}{6}}}.
\]
Hence, by taking $\delta_1=1-\delta/12,$ we obtain
\[
\lambda_{m+1}\leq n^{cn^{2-\frac{\delta}{6}-\delta_1}} \leq n^{cn^{1-\frac{\delta}{12}}}.
\]
By Proposition~\ref{successive}, we have
\[
\lambda_n \leq \lambda_m (An)^{C(n-m)} \leq  n^{cn^{1-\frac{\delta}{12}}}. 
\]

This completes the proof of our proposition. Note that we may change constant $c$ to a larger constant in the above lines of argument. 
\end{proof}

\subsection{Equilibrium measure}\label{chebsec}
In this subsection, we apply the results of our previous section to the equilibrium measure and prove Proposition~\ref{chpol}. Our proof is based on Fekete and Szeg\"o~\cite[Theorem D]{MR72941} which is also used by  Robinson~\cite{Robinson} and Smith~\cite[Proposition 4.1]{Smith} to prove a similar result. All these results are essential in controlling the roots to remain inside $\Sigma(\rho).$
\\

Suppose that $\Sigma$ is compact, is invariant under complex conjugation, and its complement is open and connected. Let $D$ be any open set containing $\Sigma$. Recall that
\[
\Sigma(\rho):=\{z\in \mathbb{C}: |z-\sigma|<\rho \text{ for some }\sigma\in\Sigma \}.
\]
Note that for any open set $D$ containing $\Sigma,$ there exist $\rho>0$ such that $\Sigma(\rho)\subset D.$

Let $\mu_{eq}$ be the equilibrium measure of $\Sigma.$ Fix any $\rho>0$ and $z_0\in \Sigma.$ It is well known that 
\begin{equation}\label{eqbd}
    U_{\mu_{eq}}(z)\geq \log (d_{\Sigma})+\delta_{\rho,\Sigma}
\end{equation}

for any $z\notin\Sigma(\rho),$ where $\delta_{\rho,\Sigma}>0 $ is a constant that only depends on $\Sigma$ and $\rho.$  Let $p_{\mathcal{Z}}(x)$ be the polynomial constructed in~\eqref{measpol} associated to the equilibrium measure.
By~\eqref{potapp} and~\eqref{eqbd} there exists some $R>d_{\Sigma}$ and $\mathcal Z$ with $|\mathcal Z|=n_0$ so that
\[
p_{\mathcal{Z}}(z)\geq R^{n_0}
\]
for any $z\notin\Sigma(\rho).$
Fix $z_0\in \Sigma(\rho)$ and $p_{\mathcal{Z}}(x),$ and  define
\begin{equation}\label{defTm}
    T_m(x):=(x-z_0)^rp_{\mathcal{Z}}(x)^q,
\end{equation}
where $m=n_0q+r$ is the Euclidean division of $m$ by $n_0$.  
Let $m\geq h$ be two positive integers. It follows that 
    \begin{equation}\label{cheby}
     T_m(z)\geq CR^{m-h}T_h(z)   
    \end{equation}
for any $z\notin\Sigma(\rho),$  where $C$ only depends on fixed parameters $\rho$ and $n_0.$

\begin{proposition}\label{chpol}
    Given a compact subset    $\Sigma\subset \mathbb{C}$ invariant under complex conjugation with connected complement,
    a monic polynomial with real coefficients $p(x)$, and $\rho>0$, there exists fixed integers $l_0$ and $k_0$ independent of $p(x)$ and a monic polynomial $Q_m(x)$ with $\deg(Q_m)=m$ for every $m=kl_0$ where $k>k_0$  such that 
    \[
    |Q_m(z)|\geq \kappa^m
    \]
    for every $z\notin \Sigma(\rho)$ where  $\kappa>d_{\Sigma}.$ Moreover, the coefficient of $x^i$ in $p(x)Q_m(x)$  is an even integer for every $\deg(p)\leq i\leq \deg(p)+m-1$, and all complex roots of $Q_m(x)$ are inside $\Sigma(\rho).$
\end{proposition}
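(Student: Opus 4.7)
The plan is to construct $Q_m$ as a small integral perturbation of the reference polynomial $T_m$ defined in~\eqref{defTm}, adjusted so that the parity condition on the middle coefficients of $p(x)Q_m(x)$ is satisfied.

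I first reduce the parity condition to a congruence on $Q_m$ itself. Since $p$ is monic of degree $d$, the $\mathbb{Z}$-linear map sending a polynomial $h$ of degree $<m$ with integer coefficients to the vector $\bigl([x^{d+i}](p(x)h(x))\bigr)_{i=0}^{m-1}\in\mathbb{Z}^m$ is upper triangular with $1$'s on the diagonal, hence a $\mathbb{Z}$-lattice isomorphism. Consequently the parity requirement is equivalent to a single congruence $Q_m\equiv Q_m^{(0)}\pmod 2$, where $Q_m^{(0)}$ is a specific monic polynomial of degree $m$ with remaining coefficients in $\{0,1\}$ determined by $p\bmod 2$ and $m$. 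So it suffices to exhibit $Q_m\in Q_m^{(0)}+2\mathbb{Z}[x]_{<m}$, monic of degree $m$, that approximates $T_m$ well on $\partial\Sigma(\rho)$.

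I take $l_0=n_0$, so for every $m=kl_0$ the Euclidean division $m=n_0q+r$ gives $r=0$ and $T_m(x)=p_{M_0,n_0,\mu_{eq}}(x)^k$; then $|T_m(z)|\geq R^m$ for $z\notin\Sigma(\rho)$ with some fixed $R>d_\Sigma$ by the bound preceding \eqref{defTm}. If I find $Q_m$ in the coset above with
\[
\max_{z\in\partial\Sigma(\rho)}|Q_m(z)-T_m(z)|\,<\,\min_{z\in\partial\Sigma(\rho)}|T_m(z)|,
\]
then Rouch\'e's theorem yields that $Q_m$ has exactly $m$ zeros inside $\Sigma(\rho)$ (giving the root condition), and $|Q_m(z)|\geq\tfrac12|T_m(z)|\geq\kappa^m$ outside $\Sigma(\rho)$ for any $\kappa\in(d_\Sigma,R)$ once $k\geq k_0$ is taken large enough to absorb the factor $\tfrac12$ into $\kappa$.

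The main obstacle is exhibiting such a $Q_m$ in the prescribed coset. Writing $Q_m=Q_m^{(0)}+2g$, this becomes a translated-lattice problem: I need $g\in\mathbb{Z}[x]_{<m}$ with $\|2g-(T_m-Q_m^{(0)})\|_{\partial\Sigma(\rho)}<R^m$, i.e.\ a given translate of the sublattice $2\mathbb{Z}[x]_{<m}\subset\mathbb{Z}[x]_{<m}$ (of index $2^m$) must meet the symmetric convex body $B:=\{h\in\mathbb{R}[x]_{<m}:\|h\|_{\partial\Sigma(\rho)}\leq R^m\}$. By the covering form of Minkowski's theorem, this reduces to a lower bound on $\vol(B)$. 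To obtain the bound, I would run the simplex construction of Section~\ref{bound_Kn} with $\mu=\mu_{eq}$ and degree $m-1$: the polynomials $p_{M,m-1,\mu_{eq}}^{ijk\pm}$ from \eqref{pmdef}, appropriately rescaled so as to lie inside $B$, provide $m$ linearly independent vertices of a simplex in $B$, and Proposition~\ref{detprop} together with the identities $I(\mu_{eq})=\log d_\Sigma<\log R$ yields a volume comfortably exceeding the $2^m$ index loss once $k\geq k_0$. This volume estimate is the one delicate step; the remainder of the argument is a standard Rouch\'e application.
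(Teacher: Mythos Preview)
Your reduction in the first paragraph breaks down because the proposition allows $p$ to have arbitrary \emph{real} coefficients, and in the paper's sole application (proof of Theorem~\ref{main1}) one takes $p=p_{M,n-m,\mu}$, which has genuinely non-integer real coefficients. With real $p$, the map $h\mapsto\bigl([x^{d+i}](ph)\bigr)_{i=0}^{m-1}$ is upper-triangular with $1$'s on the diagonal, but its off-diagonal entries are the coefficients $p_{d-1},p_{d-2},\dots$ of $p$; it is an $\mathbb{R}$-linear isomorphism, not a $\mathbb{Z}$-lattice map. Concretely, writing $Q_m=x^m+\sum q_ix^i$ one has $[x^{d+m-1}](pQ_m)=q_{m-1}+p_{d-1}$, so forcing this to be an even integer requires $q_{m-1}\in 2\mathbb{Z}-p_{d-1}$, which is not an integer when $p_{d-1}\notin\mathbb{Z}$. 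Thus $Q_m$ cannot be taken in $\mathbb{Z}[x]$ at all, and your congruence $Q_m\equiv Q_m^{(0)}\pmod{2\mathbb{Z}[x]}$ has no content here. The admissible $Q_m$'s form instead a coset of a real lattice $2A^{-1}\mathbb{Z}^m$, and finding a point of that coset inside your body $B$ is a covering-radius problem for which no ``covering form of Minkowski's theorem'' exists: a volume lower bound on $B$ does not by itself force every translate of a lattice to meet $B$.

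The paper avoids both issues by working directly with the triangular basis $T_0,\dots,T_{m-1}$ (monic of each degree) and choosing \emph{real} correction parameters $a_i,b_i$ recursively to land on even integers, using the explicit growth bound \eqref{cheby} to show the total sup-norm perturbation on $\partial\Sigma(\rho)$ is a small fraction of $|T_m|$. The $l_0$-th power trick $q_{k,l_0}=(T_k+\sum a_iT_{k-i})^{l_0}$ is what lets the top $k$ coefficients be fixed with $|a_i|\le 1/l_0$ (since each $a_i$ enters with a factor $l_0$), after which the remaining $m-k$ coefficients are fixed by $|b_i|\le 1$ adjustments whose contributions $|b_iT_i|\le C R^{-(m-i)}|T_m|$ decay geometrically. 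No lattice or volume argument is used, and $Q_m$ has real coefficients.
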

\begin{proof}
Recall $T_m(x)$ in~\eqref{defTm} for $m\in \mathbb{Z},$ and define
\[
q_{k,l}(x):=\left(T_k(x)+a_1T_{k-1}(x)+\dots+a_kT_0(x) \right)^l
\]
where $|a_i|\leq \frac{1}{l}$ are chosen recursively for $1\leq i\leq k$ such that $q_{k,l}(x)p(x)$ has even coefficients for $x^{\deg(p)+\deg(q_{k,l})-i},$ where $1\leq i\leq k.$ Note that this is possible since the coefficient of $x^{\deg(p)+\deg(q_{k,l})-j}$ for $j\leq l$ in  $q_{k,l}(x)p(x)$ is given by
\[
la_j+F_j(a_1,\dots,a_{j-1})
\]
where $F_j(a_1,\dots,a_{j-1})$ is a polynomial in terms of $a_i$ for 
$i<j.$ By~\eqref{cheby}, we have
\begin{align*}
    |q_{k,l}(z)|&=\left|\left(T_k(z)+a_1T_{k-1}(z)+\dots+a_kT_0(z) \right)^l\right|
    \\
    &\geq \left|T_k(z)\left(1-\frac{C}{l}\sum_{j\geq 1}\frac{1}{R^j}\right)\right|^l \geq   |T_k(z)|^l\left(1-\frac{C}{l(R-1)}\right)^{l}.
\end{align*}
We fix an integer $l_0> \frac{C}{R-1}$, and let
\[
C':=\left(1-\frac{C}{l_0(R-1)}\right)^{l_0}>0.
\]
Hence,
\begin{equation}\label{fineq}
|q_{k,l_0}(z)| \geq C' T_k(z)^{l_0}\geq C''|p_{\mathcal{Z}}(z)|^{\left\lfloor\frac{k}{n_0}\right\rfloor l_0}
\end{equation}
where $C''$ is a fixed constant depending on our fixed parameters.
Finally, let $m:=l_0k$ and 
\[
Q_m(x):=q_{k,l_0}(x)-\sum_{i=0}^{m-k}b_iT_i(x),
\]
where $|b_i|\leq 1$ are chosen recursively for $0\leq i\leq m-k$ such that $Q_m(x)p(x)$ has even coefficients for $x^{j}$ and $\deg(p)\leq j\leq \deg(p)+ m-k.$ By \eqref{fineq}, \eqref{cheby}, and \eqref{defTm}, we have
\begin{align*}
|Q_m(z)|\geq |q_{k,l_0}(z)|-\sum_{i=0}^{m-k}|T_i(z)|\geq |p_{\mathcal{Z}}(z)|^\frac{kl_0}{n_0} \left( C''-\frac{C_1}{R^{k-1}(R-1)}  \right).
\end{align*}
Our proposition follows by taking $k>k_0$ where $k_0$ is a fixed constant.
\end{proof}
Suppose that $\Sigma \subset \mathbb{C}$ is compact and symmetric about the real axis and is a finite disjoint union of rectangles and real intervals. We write
\[
\Sigma=\bigcup_{i=1}^{N_1}(R_i \cup \bar{R_i}) \bigcup_{j=1}^{N_2}I_j
\]
where $R_i\subset \mathbb{C}$ are rectangles and $I_j\subset \mathbb{R}$ are intervals. We define
\[
\Sigma_{\mathbb{R}}(\rho):= \bigcup_{i=1}^{N_1}(R_i(\rho) \cup \bar{R_i}(\rho)) \bigcup_{j=1}^{N_2}{I_j}_{\mathbb{R}}(\rho)
\]
for any $\rho>0$. Note that if $\bigcup_{j}{I_j}_{\mathbb{R}}\neq \emptyset$, then $\Sigma_{\mathbb{R}}(\rho)\subsetneq \Sigma(\rho)$.
\\

For proving Corollary~\ref{gsmith} which generalizes Smith's result to $\mathbb{C}$, we use Proposition~\ref{chpolreal}. The statement of this proposition is motivated by Smith~\cite[Proposition 4.1]{Smith}.
However, Smith's method does not imply this proposition.  Smith's method is based on the work of Fekete and Szeg\"o~\cite[Theorem D]{MR72941} and   Robinson~\cite{Robinson}. Smith and Robinson used the properties of the Chebyshev's polynomials  from \cite{chebyshev_R} which is only proven for subsets of $\mathbb R$ and exploits the sign changes of the polynomials on real intervals. This does not work for complex sets, so our proof of Proposition~\ref{chpolreal} is conceptually different. Also, it allows us to keep the roots inside $\Sigma_{\mathbb{R}}(\rho)\subset \Sigma(\rho)$.

\begin{proposition}\label{chpolreal}
Suppose that $ \Sigma=\bigcup_i(R_i \cup \bar{R_i}) \bigcup_{j}I_j$ is compact and symmetric about the real axis and is a finite disjoint union of rectangles and real intervals and $\bigcup_{j}I_j\neq \emptyset$ with $d_{\Sigma}\geq 1$. Given a monic polynomial with real coefficients $p(x)$ of degree $n-m$, $\rho>0$, and an even integer $n^{\varepsilon}\leq m\leq n,$ there exists a monic polynomial with real coefficients  $Q_m(x)$ such that $\deg(Q_m)=m$ and each of the following hold.
    \begin{enumerate}
    \item The degree $i$  coefficient of the product $pQ_m$  is an even integer for  $n-m\leq i\leq n-1$.
    \item \label{2deform} Take $X$ to be the set of roots of $Q_m$. Then $X$ is a subset of $\Sigma_{\mathbb{R}}(\rho)$ and symmetric about the real axis. 
    \item \label{4deform}We have 
    \begin{equation*}
\frac{\log |Q_{m}(z)|}{m} =\frac{\min_{\alpha\in X}\log|z-\alpha|}{m}+  U_{\mu_{eq}}(z)+O(m^{-\frac{\delta}{8}}\log(m))
\end{equation*}
for  all $z\in \mathbb{C}$ and some $\delta>0,$ where $\mu_{eq}$ is the equilibrium measure for $\Sigma_{\mathbb{R}}(\rho/10).$

    \item~\label{3deform} Given any root $\alpha$ of $Q_m$ and any $\alpha'$ that is either a root of $p$  or a boundary point of $\Sigma_{\mathbb{R}}(\rho)$, we have
\(
n^{-3} \leq |\alpha-\alpha'|.
\)
\end{enumerate}

\end{proposition}

\begin{proof}
Let  $m_1=\lfloor m\mu_{eq}(\bigcup_{i}R_i(\rho/10))\rfloor$ and $m_2=m-2m_1$, where $\mu_{eq}$ is the equilibrium measure of $\Sigma_{\mathbb{R}}(\rho/10)$.
Let $x_1,\dots,x_{m}$ be as follows
\begin{enumerate}
    \item $x_i$ for $i=1,2,\dots, m_1$ are identically independently selected according to $\mu_{eq}$ conditional on $x_i\in \bigcup_{i}R_i(\rho/10)$ for $1\leq i\leq m_1$.
    \item $x_{i+m_1}:=\bar{x_{i}}$ for $1\leq i\leq m_1$.
    \item $x_{2m_1+i}$ for $i=1,2,\dots,m_2$ are identically independently selected according to $\mu_{eq}$ conditional on $x_i\in \bigcup_{j}I_j(\rho/10)$ for $2m_1< i\leq m$.
\end{enumerate}

We note that $\mu_{eq}$ is H\"older with exponent $\delta \ge 1/2$ on a finite union of intervals since if $[a,b]\subset \Sigma\subset\mathbb R$, then the density of $\mu_{eq,[a,b]}$ is larger than that of $\mu_{eq,\Sigma}$. Furthermore, $d\mu_{eq}=\frac{dx}{\pi\sqrt{(b-x)(x-a)}}$ on $[a,b]$.
Let $\mathcal{Z}:=\{x_1,\dots,x_m\}$ and $P_{\mathcal{Z}}$ be the polynomial constructed in~\eqref{measpolreal} with roots $x_1,\dots,x_m$.
With probability $1-o(1)$, we note that  $x_1,\dots,x_m$ satisfy conditions in~\eqref{n_ij}, \eqref{Bij}, and \eqref{Bireal} with   $M=m^{1/4}$, $L=(mM^{-\delta})^{\frac{1}{2}+\varepsilon}< m^{1-\varepsilon}M^{-2}$, and \(
n^{-2-\varepsilon} \leq |x_i-\alpha'|
\) for any $\alpha'$ that is either a root of $p$, any other $x_j\neq x_i$, or a boundary
point of $\Sigma_{\mathbb{R}}(\rho).$ 
This follows from a Borel-Cantelli type argument. Indeed, recall the definition of $B_{ij}$ from \eqref{n_ij} and  let $n_{ij}$ be the number of elements of $\mathcal{Z}:=\{x_1,\dots,x_m\}$ lying inside  $B_{ij}$. Fix $i$ and $j$ and define the following random variables
\[
y^{ij}_k=
\begin{cases}
    1 &\text{ if $x_k\in B_{ij}$}
    \\
    0 &\text{ otherwise.}
\end{cases}
\]
Let $p=\mu(B_{ij})$. We have $\text{Prob}(y^{ij}_k=1)=p$ and $\text{Prob}(y^{ij}_k=0)=1-p.$
Note that
\[
n_{ij}=\sum_{k}y^{ij}_k.
\]

By Markov's inequality, we have 
\[
\text{Prob}\left(|n_{ij}-\mu(B_{ij})m|\geq L\right)\leq
\frac{\text{E}\left((n_{ij}-\mu(B_{ij})m)^{2h} \right)}{L^{2h}} 
\]
for any $h\geq 0$.
Moreover,

\begin{align*}
\text{E}\left((n_{ij}-\mu(B_{ij})m)^{2h} \right)=\text{E}\left(\sum_{k}y^{ij}_k-\text{E}(y^{ij}_k)\right)^{2h}
\\
\ll
\sum_{k_1,\dots,k_h}\prod_{l=1}^{h}\text{E}\left(y^{ij}_{k_l}-\text{E}(y^{ij}_{k_l})\right)^{2}\ll m^{h}\left(p(1-p) \right)^h
\\
\ll m^{h}p^h=\left(\mu(B_{ij})m\right)^h.
\end{align*}
Hence,
\[
\text{Prob}\left(|n_{ij}-\mu(B_{ij})m|\geq L\right)\leq \frac{\left(\mu(B_{ij})m\right)^h}{L^{2h}}\leq L^{-2h(\frac{\varepsilon}{1+\varepsilon})}.
\]
where we used the inequality \(\mu(B_{ij})m \leq  mM^{-\delta}\) from \eqref{holder}. Therefore,
\begin{align*}
\sum_{i,j}\text{Prob}\left(|n_{ij}-\mu(B_{ij})m|\geq L\right)&\ll 
\sum_{i,j}L^{-2h(\frac{\varepsilon}{1+\varepsilon})} \ll M^2 L^{-2h(\frac{\varepsilon}{1+\varepsilon})}
\end{align*}
for any $h\geq 0$. Since $L\gg m^{1/4}$,  by letting $h\gg \frac{1}{\varepsilon}$, we conclude our claim that with probability $1-o(1)$,  $x_1,\dots,x_m$ satisfy conditions in~\eqref{n_ij}, \eqref{Bij}, and \eqref{Bireal} with   $M=m^{1/4}$, $L=(mM^{-\delta})^{\frac{1}{2}+\varepsilon}$, and \(
n^{-2-\varepsilon} \leq |x_i-\alpha'|
\) for any $\alpha'$ that is either a root of $p$, any other $x_j\neq x_i$, or a boundary
point of $\Sigma_{\mathbb{R}}(\rho).$
By~\eqref{potapp} and~\eqref{eqbd}, this polynomial satisfies all the conditions above except the first one. 
Our method is to deform the roots of $P_{\mathcal{Z}}$ with a greedy algorithm so that it satisfies all the properties of Proposition~\ref{chpolreal}. We start with the degree $n-1$ coefficient of $pP_{\mathcal{Z}}.$
We pick at least $\lceil 10/\rho\rceil$ of them which are symmetric by the real axis, say $x_1,\dots,x_{k_0},$ and deform each of them less than $\rho/10$ so that the degree $n-1$ coefficient of $pP_\mathcal Z$ becomes even and the roots remain in side $\Sigma_{\mathbb{R}}(\rho)$ and symmetric by the real axis.
We show that this is possible.
\newline

Let $Q_{m,0}(x):=P_{\mathcal{Z}}(x)=x^m+\sum_{i=1}^{m}a_ix^{m-i}.$   The coefficient of $x^{\deg(p)+m-j}$ for $j\leq l$ in  $Q_{m,0}(x)p(x)$ is given by
$a_j+F_j(a_1,\dots,a_{j-1})$
where $F_j(a_1,\dots,a_{j-1})$ is a polynomial in terms of $a_i$ for 
$i<j.$ We define $x_i(t):=x_i+t\frac{\rho}{10}$ for $i\leq k_0$ and call the deformed polynomial $Q_{m,0,t}.$
By the intermediate value theorem, there exists $|t|\leq 1$ such that the degree $n-1$ coefficient of $pQ_{m,0,t}$ becomes an even number and the roots remain inside $\Sigma_{\mathbb{R}}(\rho).$ Suppose that after deformation, condition~\eqref{3deform} is violated for $y_1,\dots,y_k$ which are symmetric by the real axis.
By \eqref{Bireal}, it is possible to pick $k$ other roots of $Q_{m,0}(x)$ say $y_1',\dots, y_k'$ which are symmetric by the real axis such that there are no other roots close to $y_i'$ within distance $n^{-2-\varepsilon}.$ We update $y_1$ and $y_1'$ by $y_1+t$ and $y_1'-t.$
By a covering argument, there exists $t\ll n^{-3}$ such that $y_1+t$ is away from other roots with distance at least $n^{-3}.$ Similarly, update $y_i, y_i'$ so that the  condition~\eqref{3deform} is satisfied. 
Denote the updated polynomial by $Q_{m,1}.$ Note that $Q_{m,1}$
satisfies all the conditions above except the first one, and 
the degree $n-1$ coefficient of $pQ_{m,1}$ is even.
\\

We construct $Q_{m,l}$ for $l\geq 1$ recursively as follows. Suppose that $Q_{m,l}$
satisfies all the conditions except the first, and the degree $i$ coefficient of the product $pQ_{m,l}$  is an even integer for  $n-l\leq i\leq n-1$, we want to make the coefficient of $n-l-1$ even as well without changing the higher degree coefficients by deforming the roots of $Q_{m,l}$ as we did in the previous paragraph. 
\\

Suppose that $l\leq \log(n)^2.$  Let  $l_1=\lfloor l\mu_{eq}(\bigcup_{i}R_i(\rho/10))\rfloor$ and $l_2=l-2l_1$.
We pick at least $n^{\varepsilon}$ distinct symmetric collections of subsets $\mathcal{Z}_{\sigma_i}:=\{x_{\sigma_i(1)},\dots,x_{\sigma_i(l)}\} \subset\mathcal{Z}=\{x_1,\dots,x_m \}$ with $l$ elements for $1\leq i\leq n^{\varepsilon}$,
where $\varepsilon>0$ is fixed and is arbitrary small as follows:
\begin{enumerate}
    \item $x_{\sigma_i(1)},\dots,x_{\sigma_i(l_1)}$ are selected uniformly among $x_1,\dots,x_{m_1}$, with probability $1-o(1)$ these points have not been deformed or selected in previous steps, and we assume this.
    \item $x_{\sigma_i(j+l_1)}=\bar{x}_{\sigma_i(j)}$ for every $1\leq j\leq l_1$.
    \item $x_{\sigma_i(2l_1+1)},\dots,x_{\sigma_i(l)}$ are selected uniformly among $x_{2m_1+1},\dots,x_{m}$, with probability $1-o(1)$ these points have not been deformed or selected  in previous steps, and we assume this.
\end{enumerate}
Fix $\mathcal{Z}_{\sigma_i}$ and consider $\vec{x}=[x_{\sigma_i(1)},\dots,x_{\sigma_i(l)}]$, and  
let
\[
s_k:=\sum_{j=1}^lx_{\sigma_i(j)}^k,
\]
and 
\[
e_k:=\sum_{1\leq j_1<\dots<j_k\leq l}x_{\sigma_i(j_1)}\dots x_{\sigma_i(j_k)},
\]
where $e_0=1$ and $s_0=l.$
Let 
\(\nabla s_k=k[x_{\sigma_i(j)}^{k-1}]\) be the gradient of $s_k.$ Let
\[
v_l:=\left[\frac{1}{\prod_{j\neq 1}(x_{\sigma_i(1)}-x_{\sigma_i(j)})},\frac{1}{\prod_{j\neq2}(x_{\sigma_i(2)}-x_{\sigma_i(j)})},\dots, \frac{1}{\prod_{j\neq l}(x_{\sigma_i(l)}-x_{\sigma_i(j)})}\right].
\]
Note that the coordinates of $v_l$ for $j > 2l_1$ are real, and for $j\le l_1$, $v_{l,j}=\overline{v_{l,j+l_1}}$. This implies that the perturbation of the roots in this direction remains symmetric.
It follows that $v_l$  is orthogonal to  $\nabla s_k$ for every $1\leq k\leq l-1$ and
\(
\left<v_l,\nabla s_l \right>=l.
\)
We note that 
\[
e_k=\frac{1}{k} \sum_{j=1}^k (-1)^{j-1}e_{k-j}s_j.
\]
This implies that $\left<v_l,\nabla e_k \right>=0$ for every $0\leq k\leq l-1$ and
\(
\left<v_l,\nabla e_l \right>=(-1)^{l-1}.
\) We note that with probability 1, by~\eqref{potapp} and~\eqref{eqbd}, the size of $v_l$ is exponentially small and we have
\[
|v_l|\leq d_{\Sigma_{\mathbb{R}}(\rho/10)}^{-l(1+O(l^{-\delta}))}
\]
for some $\delta>0$.
We deform $\vec{x}$ or equivalently $\mathcal{Z}_{\sigma_i}$ with the following ODE
$\frac{d\vec{x}(t)}{dt}=v_l(t)$ and the initial condition $\vec{x}(0)=\vec{x}.$ We note that the associated subset $\mathcal{Z}_{\sigma_i}(t)$ deforms continuously with $t$ and it remains symmetric by the $x$ axis. By the intermediate value theorem for any large $A>0$, it is possible to deform $n^{\varepsilon }$ distinct $l$-tuples $[x_1,\dots,x_l]$ along the direction of $v_l$ for some $t<l^{-A}$  and make the coefficient of $x^{n-l-1}$   in $pQ_{m,l+1}$ is even.
It is clear that the perturbation  remains inside $\Sigma_{\mathbb R}(\rho).$
 Suppose that after deformation, condition~\eqref{3deform} is violated for $y_1,\dots,y_k,$ where $k\leq n^{\varepsilon}l$.
 By \eqref{Bireal}, it is possible to pick $kl\leq n^{\varepsilon}\log(n)^4$ other roots of $Q_{m,l}(x)$ say $y_{i,j}$ for $1\leq i\leq k$ and $1\leq j\leq l$ such that $y_{i,j}$ are i.i.d. according to $\mu_{eq}$. We deform $l+1$ tuple $\vec{x}_i:=\left[y_i,y_{i,1},\dots, y_{i,l}\right]$ with the following ODE
$\frac{d\vec{x}_i(t)}{dt}=v_{l+1,i}(t)$ and the initial condition $\vec{x}_i(0)=\vec{x}_i$ where
\[
v_{i,l+1}(t):=\left[\frac{1}{\prod_j(y_i-y_{1,j})},\frac{1}{(y_{i,1}-y_i)\prod_j(y_{i,1}-y_{i,j})},\dots, \frac{1}{(y_{i,l}-y_i)\prod_j(y_{i,l}-y_{i,j})}\right].
\]
 By a covering argument, there exists $t\ll n^{-3+\epsilon}$ such that $x_1(t)$ is away from other roots with distance at least $n^{-3}$.
Denote the updated polynomial by $Q_{m,l+1}.$ Note that $Q_{m,l+1}$
satisfies all the conditions above except the first one, and  the degree $i$  coefficient of the product $pQ_{m,l+1}$  is an even integer for  $n-l-1\leq i\leq n-1$.
\\

Finally, suppose that $m \geq l\geq \log(n)^2.$ We pick a symmetric  subsets $\mathcal{Z}_{\sigma}:=\{x_{\sigma(1)},\dots,x_{\sigma(l)}\} \subset\mathcal{Z}=\{x_1,\dots,x_m \}$ with $l$ elements that satisfies three conditions listed above. As before, let $\vec{x}=[x_{\sigma(1)},\dots,x_{\sigma(l)}]$
and 
\[
v_l:=\left[\frac{1}{\prod_{i\neq 1}(x_{\sigma(1)}-x_{\sigma(i)})},\frac{1}{\prod_{i\neq2}(x_{\sigma(2)}-x_{\sigma(i)})},\dots, \frac{1}{\prod_{i\neq l}(x_{\sigma{(l)}}-x_{\sigma{(i)}})}\right].
\]
By~\eqref{potapp} and~\eqref{eqbd},   the size of $v_l$ is smaller than any negative power of $n$
\[
|v_l|\leq d_\Sigma^{-l}\leq e^{-\log(n)^2}\ll n^{-A}
\]
for any $A>0.$
We deform $\vec{x}$ with the following ODE
$\frac{d\vec{x}(t)}{dt}=v_l(t)$ and the initial condition $\vec{x}(0)=\vec{x}.$ By the intermediate value theorem, there exists $|t|\leq 1$ so that the degree $n-l-1$ coefficient of $pQ_{m,l+1}$ is even. This time the perturbation does not change conditions \eqref{2deform}, \eqref{4deform} and \eqref{3deform},
since the perturbation is smaller than $n^{-A}$ for any $A$, and  the degree $i$  coefficient of the product $pQ_{m,l+1}$  is an even integer for  $n-m\leq i\leq n-1$. This completes the proof of our proposition. 
\end{proof}

\section{Proofs of main theorems}
\subsection{Proof of Theorem~\ref{main1}}  
In this subsection, we assume that  $\Sigma\subset \mathbb{C}$ is compact and  has empty interior with connected complement.  We also assume that $\mu \in \mathcal{B}_{\Sigma}$ is a H\"older measure.

\begin{proof}[Proof of Theorem~\ref{main1}]\label{rouche}
Suppose that $\mu \in \mathcal{B}_{\Sigma}$ is a H\"older measure with exponent $\delta,$  and $D$ is any open set containing $\Sigma$. Fix $\rho>0$ such that $\Sigma(\rho)\subset D.$
We apply Proposition~\ref{chpol} to $p_{\mathcal Z}(x)$ where $|\mathcal Z|=n-m$ as defined in~\eqref{measpol} and $m:= n^{1-\delta_0}$ for some $\delta_0>0$ that we specify later and obtain $Q_m(x)$ such that
\[
p_\mathcal Z(x)Q_m(x)=x^{n}+\sum_{i=0}^{n-1} a_ix^i
\]
where $a_i$ are even for $n-m\leq i\leq n-1.$ Let 
\[
r(x):=\frac{1}{2}+\sum_{i=0}^{n-1-m} \frac{a_i}{4} x^i.
\]We write $r(x)$ in terms of  linearly independent integral polynomials $\{P_0,\dots,P_{n-1-m}\}$ obtained from Minkowski's successive minima of $K_{n-1-m}$, and obtain
\[
r(x)=\sum_{i=0}^{n-1-m} \alpha_i P_i(x).
\] 
Let
\[
w(x):=\sum_{i=0}^{n-1-m} \lfloor\alpha_i\rfloor P_i(x)
\]
and
\[
h_n(x):= x^{n}+\sum_{i=n-m}^{n-1} a_ix^i +4w(x)-2.
\]
By the Eisenstein criteria at the prime 2, $w(x)$ is irreducible. Moreover,
\[
h_n(x)=p_{\mathcal Z}Q_m(x)+\sum_{i=0}^{n-1-m} \beta_iP_i(x),
\]
where $|\beta_i|=4|\alpha_i-\lfloor\alpha_i\rfloor|<4.$ Note that
\[
|P_i(z)|\leq \lambda_i e^{(n-m)U_\mu(z)}
\]
for every $z\in \mathbb{C}$ where by Proposition~\ref{upperbound},
\[\lambda_i\leq n^{cn^{1-\frac{\delta}{12}}}.\]
Moreover, by \eqref{potapp}  and Proposition~\ref{chpol},
\[
\left| p_{\mathcal Z}(z)Q_m(z) \right|\geq  e^{(n-m)U_\mu(z)} \kappa^m n^{-Cn^{1-\frac{\delta}{6}}}
\]
for every $z\notin \Sigma(\rho)$, 
where $M=\lfloor n^{1/3}\rfloor.$
By taking $\frac{\delta}{12}>\delta_0>0$ and $m=n^{1-\delta_0}$, it follows that 
\[
\left| p_{\mathcal Z}Q_m(z) \right|> \left|\sum_{i=0}^{n-m-1} \beta_iP_i(z)\right|
\]
for large enough $n$ and every $z\notin \Sigma(\rho).$
By Rouch\'e's theorem, $h_n(z)$ and $p_{\mathcal{Z}}Q_m(z)$ have the same number of roots inside $\Sigma(\rho).$ Since all roots of $p_{\mathcal Z}Q_m(z)$ are inside $\Sigma(\rho),$ all roots of $h_n(z)$ are also inside $\Sigma(\rho).$ By Lemma~\ref{lem1}, we have
\[
\frac{\log|Q_m(z)|}{m}\leq \max(\log|z|,0)+C_{\Sigma},
\]
where $C_{\Sigma}$ is a constant that only depends on $\Sigma.$ Since $m=n^{1-\delta_0},$ we have 
\[
\frac{\log |h_n(z)|}{n} \leq  U_\mu(z)+ O(n^{-\delta_0}).
\]
This completes the case when $\Sigma\subset \mathbb C$. For $\Sigma\subset \mathbb{R}$, our argument is similar. We apply Proposition~\ref{chpolreal} instead of Proposition~\ref{chpol} and  $p_{\mathcal Z}(x)Q_m(x)$ which has $n$ distinct roots which are $n^{-3}$ apart. We use the intermediate value theorem and the fact that we have $n$ sign changes on $\Sigma_{\mathbb R}(\rho)$ to prove all roots are real and inside $\Sigma_{\mathbb{R}}(\rho)$ instead of the  Rouch\'e's theorem.
\end{proof}

\subsection{Proof of Theorem~\ref{general}}
In this section, we show that Theorem~\ref{main1} implies Theorem~\ref{general}. To prove Theorem~\ref{main1}, we assumed that $\mu$ is a H\"older measure, that the support of $\mu$ has empty interior, and that its complement is connected.
Here we show that these conditions are unnecessary for proving Theorem~\ref{general}. 
By Proposition~\ref{holderreduction}, for any $\rho>0,$  there exists a sequence of H\"older probability measure $\mu_n\in\mathcal{B}_{\Sigma(\rho)}$ such that \( \lim_{n\to \infty} \mu_n=\mu\). So by a diagonal argument the proof is reduced to the case where $\mu$ is H\"older.
This is sufficient for the case $\Sigma\subseteq \mathbb R$, so suppose otherwise.
\newline

Recall $\Sigma\subset [a,b]\times[-c,c]$ as in section \ref{bound_Kn} and
$[a,b]\times[-c,c]= \bigcup_{0\leq i,j< 2M}B_{ij}$,
where we define $B_{ij}=[a_i,a_{i+1}]\times[c_{j},c_{j+1}]$ and $a_i=a+\frac{i(b-a)}{2M}$ and $c_j=-c+\frac{j(2c)}{2M}$. Furthermore, $z_{ijk}\in B_{ij}$ for each $1\le k\le n_{ij}=\lfloor(n+1)\mu(B_{ij})\rfloor+\epsilon_{ij}$. By a covering argument, we can choose the parameter $A$ in \eqref{Bireal} to be $-\frac{1}{2}\log_n(n^{-1}M^{-2+\delta})\le A<1$. Here we take $M=\lfloor n^{1/3}\rfloor$ and $A=\frac{5}{6}$.
We now define a measure which imitates $\mu$ by being uniformly distributed on a union of intervals around the $z_{ijk}$.
Let $\tilde \mu$ be the measure supported on $\bigcup_{ijk}\{z:|\Re(z-z_{ijk})|<\frac{1}{2(n+1)},\Im(z)=\Im(z_{ijk})\}$   
where $\tilde \mu$ restricted to each interval is the one-dimensional Lebesgue measure on that interval.
Note that $\tilde\mu$ is a probability measure since $|z_{ijk}-z_{i'j'k'}|\gg n^{-\frac{5}{6}}$.
Now let $\tilde\nu$ be the equilibrium measure on the support of $\tilde\mu$, and let $\tilde \nu_{\epsilon}$ be the normalized pushforward of $\tilde\nu$ under the scaling map $x\mapsto (1+\epsilon)x$.
Finally, define $\mu_{\epsilon,\epsilon'}=(1-\epsilon')\tilde\mu+\epsilon'\tilde\nu_\epsilon$ for $\epsilon,\epsilon'\in(0,1)$.

\begin{proposition}\label{no_interior_bound}
For all $Q\in\mathbb Z[x]$ and $0<\epsilon,\epsilon'<\frac{1}{2}$, $\int \log|Q(x)|d\mu_{\epsilon,\epsilon'}(x)\ge 0$ for large enough $n$.
\end{proposition}
\begin{proof}First, we relate $U_{\tilde{\mu}}$ with $U_\mu$. We compute
$$U_{\tilde\mu}(x)=\int\log|x-z|d\tilde\mu = \sum_{ijk}\int_{\frac{-1}{2(n+1)}}^\frac{1}{2(n+1)}\log|x-y -z_{ijk}|dy.$$ 
Let $\hat{z}$ be the closest root of $P_{\mathcal{Z}}$ to $x$. Then $\forall ijk\neq hrs$, $|x-z_{ijk}|\gg n^{-5/6}$ by \eqref{Bij} choosing $M= \lfloor n^{1/3}\rfloor$.
So for $ijk\neq hrs$, we have that 
$$\int_{\frac{-1}{2(n+1)}}^\frac{1}{2(n+1)}\log|x-y-z_{ijk}|dy - \frac{1}{n}\log|x-z_{ijk}| = \int_{\frac{-1}{2(n+1)}}^\frac{1}{2(n+1)}\log\left|1-\frac{y}{x-z_{ijk}}\right| dy  - \frac{\log|x-z_{ijk}|}{n(n+1)}.$$
Since $\log\left| 1-\frac{y}{x-z_{ijk}}\right|\le \frac{|y|}{|x-z_{ijk}|}$, this is $O(n^{-7/6} + \frac{|\log|x-z_{ijk}||}{n^2})$.
Thus if $|x-\hat{z}|\ge n^{1/6}$, we have by Lemma \ref{lem1} that
$$U_{\tilde\mu}(x)=U_{\mu}(x)+O(n^{-1/6}).$$
Otherwise,
$$U_{\tilde\mu}(x)=\int_{\frac{-1}{2(n+1)}}^\frac{1}{2(n+1)}\log|x-y-\hat{z}|dy
+ \frac{1}{n}\sum_{ijk\neq hrs}\log|x-z_{ijk}|+O(n^{-1/6})$$
In the latter case, $\int_{\frac{-1}{2(n+1)}}^\frac{1}{2(n+1)}\log|x-y-\hat{z}|dy \ll\max(n^{-1}\log(n),\int_{\frac{-1}{2n}}^\frac{1}{2n}\log|y|dy)\ll n^{-1}\log(n)$.
From our proof of Proposition \ref{simplex},
$$
\frac{\log |p_{\mathcal{Z}}^{efg+}(x)|}{n} - U_{\mu}(x)= \frac{\log \frac{|x-\hat{z}||x-\Re(z_{efg})|}{|x-z_{efg}||x-\overline{z_{efg}}|}}{n} +O\left(\frac{M^2\log(n)L}{n}+ M^{-\delta/2}\log(n)^{1/2}\right).
$$
Taking $M\sim n^{1/3}$ and $L\sim n^{\frac{1}{3}-\frac{\delta}{6}}$, we have now shown that
$$U_{\tilde\mu}(x) = U_\mu(x)
+O\left(n^{\frac{-1}{6}}+n^{\frac{-\delta}{6}}\log(n)\right).$$
Therefore in either case, we have $U_{\tilde\mu(x)}=U_\mu(x)+O(n^{-1/6}+n^{-\delta/6}\log(n))$.
\newline

Now for any $Q(x)=c\prod_{i=1}^m(x-\alpha_i)\in\mathbb Z[x]$, we have that
$$\int\log|Q(x)|d\mu_{\epsilon, \epsilon'} =(1-\epsilon')\int\log|Q(x)|d\tilde\mu + \epsilon'\int\log|Q(x)|d\tilde\nu_\epsilon$$
Using our calculation above,
$$\int\log|Q(x)|d\tilde\mu = \log|c| + \sum_{i=1}^mU_{\tilde \mu}(\alpha_i) = \int \log|Q(x)|d\mu 
+ mO(n^{-1/6}+n^\frac{-\delta}{6}\log(n))$$
Since $I_{\tilde\nu}\ge I_\mu\ge0$, we have that
\begin{equation}\label{pushcap}I(\tilde\nu_\epsilon) = \int\int\log|(1+\epsilon)(x-y)|d\tilde\nu d\tilde\nu=I(\tilde\nu)+\log|1+\epsilon|>I(\tilde\nu)+\epsilon-\epsilon^2.\end{equation}
We now bound $I(\tilde\nu)$. Since $\tilde \nu$ is the equilibrium measure on the support of $\tilde \mu$, $I(\tilde \mu)\le I(\tilde\nu)$.
To compute $I(\tilde\mu)$, we first see that
$\int_{\frac{-1}{2(n+1)}}^\frac{1}{2(n+1)}\int_{\frac{-1}{2(n+1)}}^\frac{1}{2(n+1)}\log|x-y|dxdy\ll n^{-2}\log(n)$.
Therefore
$$I(\tilde\mu)=\sum_{ijk\neq i'j'k'}\int_{\frac{-1}{2(n+1)}}^\frac{1}{2(n+1)}\int_\frac{-1}{2(n+1)}^\frac{1}{2(n+1)}\log|z_{ijk}-z_{i'j'k'}+x-y|dxdy+O(n^{-1}\log(n)).$$
By the same argument before, this is
$\frac{1}{n^2}\sum_{ijk\neq i'j'k'}\log|z_{ijk}-z_{i'j'k'}|+O(n^{-1/6})$.
Applying \eqref{maineq} with $M\sim n^{1/3}$ and $L\sim n^{\frac{1}{3}-\frac{\delta}{6}}$,
$I(\tilde\mu) = O(n^{-1/6}+n^{\frac{-\delta}{6}}\log(n))$. Therefore for large enough $n$, $|I(\tilde \mu)|\le \frac{\epsilon}{4}$. Using \eqref{pushcap}, we have $I(\tilde\nu_\epsilon)>\frac{\epsilon}{4}$ since $\epsilon<1/2$.
Thus since
$$\int\log|Q(x)|d\mu_{\epsilon,\epsilon'}=(1-\epsilon')\int\log|Q(x)|d\mu+m\left(\epsilon'I(\tilde\nu_\epsilon)+(1-\epsilon')O(n^{-1/6}+n^\frac{-\delta}{6}\log(n))\right)$$
by equation (I.1.9) of \cite{logpotentials}, this shows that $\int\log|Q(x)|d\mu_{\epsilon,\epsilon'}>0$
for sufficiently large $n$. So for each $0<\epsilon,\epsilon'<\frac{1}{2}$, there is a large enough $n$ for which $\int\log|Q(x)|d\mu_{\epsilon,\epsilon'}>0$ as desired.
\end{proof}

\begin{proof}[Proof of Theorem~\ref{general}]
Suppose that $d_{\Sigma}<1.$ Note that
\[
\log d_{\Sigma}= I(\mu_{eq})=\sup_{\mu\in \mathcal{P}_{\Sigma}}I(\mu),
\]
where  $\mu_{eq}$ is the equilibrium measure of $\Sigma.$
Hence, 
\[
I(\mu)<0
\]
for every $\mu\in \mathcal{P}_{\Sigma}.$ By Theorem~\ref{mdim} for polynomial $Q(x,y)=x-y$, 
\[
\mathcal{B}_{\Sigma}=\emptyset.
\]
Since $\mathcal{A}_{\Sigma}\subset \mathcal{B}_{\Sigma},$ this completes the proof of Theorem~\ref{general} if $d_{\Sigma}<1.$
\\

Otherwise, $d_{\Sigma}\geq 1$ and $\mu_{eq}\in \mathcal{B}_{\Sigma}.$ 
This implies that $\mathcal{B}_{\Sigma}\neq \emptyset.$
Denote by $\tilde\mu_{\epsilon,\epsilon'}$ the measure described in Proposition \ref{no_interior_bound} for the minimal value of $n$ giving the theorem. This gives us that $\tilde\mu_{\frac{1}{m},\frac{1}{m}}\stackrel{\ast}{\rightharpoonup}\mu$ as $m\to\infty$. Further, for each $m$, there is a sequence $P_{m,n}$ of polynomials with $\mu_{P_{m,n}}\stackrel{\ast}{\rightharpoonup}\tilde\mu_{\frac{1}{m},\frac{1}{m}}$ as $n\to \infty$. Taking a subsequence $P_{m,n_m}$, we have $\mu_{P_{m,n_m}}\stackrel{\ast}{\rightharpoonup}\mu$ as $m\to\infty$. 
Therefore, we have shown that the assumptions that the support of $\mu$ must have non-empty interior and have its complement be connected are unnecessary. Theorem~\ref{general} follows from Theorem~\ref{main1}. 
\end{proof}
Finally we give a proof for Corollary~\ref{gsmith}.
\begin{proof}[Proof of Corollary~\ref{gsmith}]
    Suppose that $\Sigma=\bigcup_{i}(R_i \cup \bar{R_i}) \bigcup_{j}I_j$ with $d_{\Sigma}>1$. 
   By \cite[Proposition 2.4]{Smith}, there exists a sequence of probability measures $\mu_n\in\mathcal{B}_\Sigma$ with H\"older exponent at least $1/2$ such that   \( \lim_{n\to \infty} \mu_n=\mu\). So we may assume without loss of generality that $\mu$ is H\"older. By Proposition~\ref{no_interior_bound}, we may further assume that the support of $\mu$ is inside $\Sigma$, has empty interior, and has connected complement.
   \newline

    By continuity of the transfinite diameter~\cite[Theorem J]{MR72941}, it is possible to find a subset $\Sigma_1\subset \Sigma$ which is a finite union of rectangles and intervals such that ${\Sigma_{1\mathbb{R}}}(\rho)\subset \Sigma$ and $d_{\Sigma_1}> 1$. We can see this as follows. Let $\Sigma=\bigsqcup_{i}(R_i \sqcup \bar{R_i}) \bigsqcup_{j}I_j.$ Let $\ell$ be the length of the longest side of the rectangles and intervals, and let $d$ be the smallest distance between any pair of distinct rectangles or intervals if there are multiple components and 1 otherwise.
    Now let $0 < \epsilon < \frac{\min(\ell,d)}{2\ell}\left(1-\frac{1}{d_{\Sigma}}\right)$.
    For a given $x\in\Sigma$, let $c_x$ be the center of the rectangle or interval it lies in. Let $\tilde x=x + \epsilon(c_x-x)$. This essentially shrinks each rectangle/interval towards its center by a factor of $1-\epsilon$.
    Thus for $x,y\in\Sigma$, if $c_x=c_y$, then $|\tilde x - \tilde y|=(1-\epsilon)|x-y|>\frac{1}{d_\Sigma}|x-y|$.
    Otherwise, we have $|\tilde x - \tilde y|\ge|x-y|-2\epsilon \ell$ by the triangle inequality since $|c_x-x|<\ell$.
    Note that if $c_x\neq c_y$, then $|x-y|>d$ in which case
    $$|\tilde x - \tilde y|\ge|x-y|-2\epsilon \ell > \frac{1}{d_\Sigma}|x-y|.$$
    Now let $x_{n,1},x_{n,2},\dots, x_{n,i}$ be Fekete points of $\Sigma$ for each $n\in\mathbb N$. Then
    $$d_{\Sigma_1}\ge \lim_{n\to\infty}\prod_{1\le i<j\le n}|\tilde x_{n,i} - \tilde x_{n,j}|^\frac{1}{\binom{n}{2}} > \frac{1}{d_\Sigma}\lim_{n\to\infty}\prod_{1\le i<j\le n}|x_{n,i} - x_{n,j}|^\frac{1}{\binom{n}{2}}=1.$$
    Thus there is some finite disjoint union of rectangles and intervals $\Sigma_1$ and $\rho>0$ for which $\Sigma_1(\rho)\subset\Sigma$ with $d_{\Sigma_1}>1$ as claimed.\footnote{In a private communication, Professor Rumely kindly outlined a different argument for continuity of the capacity for more general $\Sigma$.}
    \newline
    
    Note that the argument in the proof of Theorem~\ref{main1} in section ~\eqref{rouche} works with a slight modification. More precisely,  we construct $p_{\mathcal{Z}}$ as in the proof of Theorem~\ref{main1} and apply Proposition~\ref{chpolreal} to $\Sigma_1$ and $P_{\mathcal{Z}}$ to obtain $Q_m(x)$ such that 
      $p_{\mathcal Z}(x)Q_m(x)$ has $n$ distinct roots inside $\Sigma$ which are $n^{-3}$ apart from the boundary of $\Sigma$. Moreover, we have
      \[
\left| p_{\mathcal Z}(z)Q_m(z) \right|\geq |z-\hat{z}| e^{(n-m)U_\mu(z)} \kappa^m n^{-Cn^{1-\frac{\delta}{6}}}.
\]
where $|z-\hat{z}|=\min_{p_{\mathcal Z}(\alpha)Q_m(\alpha)=0}|z-\alpha|$. We use the above inequality and the intermediate value theorem on $\bigcup I_j$ to prove that the number of real roots of the two polynomials $p_{\mathcal Z}(x)Q_m(x)$ and $h_n(x)=p_{\mathcal Z}Q_m(x)-\sum_{i=0}^{n-1-m} \beta_iP_i(x)$ are the same. We apply the above inequality and Rouch\'e's theorem on the boundary of $\bigcup_{i}(R_i \cup \bar{R_i})$ to prove the number of complex roots of $p_{\mathcal Z}(x)Q_m(x)$ and $h_n(x)$ are the same inside $\bigcup_{i}(R_i \cup \bar{R_i})$. This completes the proof of our corollary. 
\end{proof}

\section{The Algorithm}\label{main}
\subsection{Lattice Algorithms}\label{lattice_alg}
The algorithm we develop requires access to short vectors in a lattice. As seen in the work by Micciancio ~\cite{SVP}, finding a vector whose length is within a constant multiple of the shortest vector in a lattice is NP-hard to compute. This is known as the shortest vector problem or SVP. It is still desirable  in general to have access to short vectors in a lattice. An algorithm known as the LLL-algorithm, developed by Lenstra, Lenstra, and Lov\'asz, finds a basis of vectors in a lattice in polynomial time ~\cite{LLL} where the shortest vector of the output has length within an exponential factor of the shortest vector of the lattice. In the implementation of this algorithm, we use the built-in method in PARI/GP named \texttt{qflll}. When the \texttt{qflll} method is given a matrix $A$ whose columns generate a lattice, it returns a transformation matrix $T$ so that $AT$ is an ``LLL-reduced" basis of the lattice. While the LLL-algorithm is quite reasonable in practice, the asymptotic bounds are not strong enough to prove a sub-exponential factor. \newline

In 1986, Schnorr~\cite{Schnorr} devised a parametrized family of algorithms attacking the SVP problem which output reduced bases somewhere between the stringent requirements of Korkine-Zolotarev reduction and the looser requirements of LLL reduction based on the input parameter. Careful parameter selection achieves slightly sub-exponential constant factors in polynomial time. Schnorr proved that if the algorithm produces a basis $b_1,\dots, b_n$, then $||b_1||$ is within a subexponential factor of a shortest vector in the lattice. We prove that in addition, if $\lambda_1,\dots, \lambda_n$ are so that $\lambda_i$ is the least value for which there are $i$ linearly independent vectors of norm at most $\lambda_i$, then for each $i=1,\dots, n$, $||b_i||$ is within a sub-exponential factor of $\lambda_i$.
We now introduce some notation from Schnorr's work and  recall some of his results.
\newline

Let $b_1,\dots, b_n$ be a basis. Then let $b_i^*$ be the projection of $b_i$ onto the orthogonal complement of $\text{span}\{b_1,\dots, b_{i-1}\}$ for $i=1,\dots, n$. We can get $b_1=b_1^*,b_2^*,\dots,$ and $b_n^*$ by Gram-Schmidt process without normalizing.
Now define $\mu_{i,j}:=\frac{\langle b_i,b_j^*\rangle}{||b_j^*||^2}$.
\begin{definition}We say $b_1,\dots, b_n$ is size-reduced if $\mu_{i,j}\le 1/2$ for all $i>j$.\end{definition}
\begin{definition}
Let $\Lambda_i$ be the lattice spanned by the projections of $b_i,b_{i+1},\dots, b_n$ onto the orthogonal complement of $\text{span}\{b_1,\dots, b_{i-1}\}$. Then $b_1,\dots, b_n$ is Korkine-Zolotarev reduced if $||b_i^*||$ is minimal among non-zero vectors in $\Lambda_i$.
\end{definition}
\begin{definition}Let $k\mid n$. We say that $b_1,\dots, b_n$ is semi $k$-reduced if it is size-reduced and it satisfies both
\begin{equation}
    ||b_{ik}^*||^2 \le 2||b_{ik+1}^*||^2,
\end{equation}
and the components of $b_{ik+j}$ for $j=1,\dots, k$ orthogonal to $b_1,\dots, b_{ik-1}$ are Korkine-Zolotarev reduced for each $i=0,\dots,\frac{n}{k}-1$. Call this property (KZ).
\end{definition}
Lastly, the constant $\alpha_k$ is defined as $\max\frac{||b_1||^2}{||b_k^*||^2}$ where the max is over all Korkine-Zolotarev reduced bases on lattices of rank $k$. Corollary 2.5 in Schnorr's paper shows that $\alpha_k\le k^{1+\log k}$.
\newline

\begin{theorem}\label{factors}
Let $b_1,\dots,b_n$ be a semi $k$-reduced basis and $\lambda_j$ be the least real number so there are $j$ linearly independent vectors in the lattice spanned by $b_1,\dots, b_n$ of norm at most $\lambda_j$. Then $||b_j||\le k^{\frac{n}{k}\log k +O(\frac{n}{k}+\log k)}\lambda_j$.
\end{theorem}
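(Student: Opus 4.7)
The plan is to combine three ingredients: an upper bound for $\|b_j\|$ in terms of the Gram--Schmidt norms $\|b_i^*\|$ coming from size-reducedness; a monotonicity estimate for the Gram--Schmidt norms that exploits both the Korkine--Zolotarev (KZ) property inside each block of size $k$ and the factor-$2$ bound across block boundaries; and a classical lower bound on $\lambda_j$ in terms of the tail of the Gram--Schmidt norms.

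First I would use size-reducedness. Writing $b_j = b_j^* + \sum_{i<j}\mu_{j,i}\,b_i^*$ with $|\mu_{j,i}|\le 1/2$ gives
\[
\|b_j\|^{2}\;\le\;\|b_j^*\|^{2} + \tfrac{1}{4}\sum_{i<j}\|b_i^*\|^{2}\;\le\;n\cdot \max_{i\le j}\|b_i^*\|^{2},
\]
so it suffices to bound $\max_{i\le j}\|b_i^*\|$ by a sub-exponential multiple of $\lambda_j$. Next I would prove the monotonicity estimate
\[
\|b_l^*\|^{2}\;\le\;(2\alpha_k)^{\,n/k+1}\,\|b_{l'}^*\|^{2}\qquad\text{for every } l\le l'.
\]
If $l$ and $l'$ lie in a common block $\{ik+1,\dots,(i+1)k\}$ of the semi $k$-reduced basis, the projections of $b_l,\dots,b_{(i+1)k}$ onto the orthogonal complement of $\mathrm{span}\{b_1,\dots,b_{l-1}\}$ inherit KZ-reducedness because KZ-reducedness passes to trailing sub-bases. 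Applying the defining inequality $\|c_1\|^{2}\le\alpha_r\,\|c_r^*\|^{2}$ of a KZ-reduced basis of some rank $r\le k$ then yields $\|b_l^*\|^{2}\le\alpha_k\|b_{l'}^*\|^{2}$ within one block. Crossing from block $i$ to block $i+1$ costs at most the factor of $2$ guaranteed by condition (KZ). Concatenating these estimates across the at most $n/k$ blocks separating $l$ from $l'$ and invoking $\alpha_k\le k^{1+\ln k}$ gives the claimed bound, whose $\log_k$ is at most $(n/k)\ln k + O(n/k)$.

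Third, I would show that $\lambda_j \ge \min_{l\ge j}\|b_l^*\|$. Let $v_1,\dots,v_j$ be linearly independent lattice vectors with $\|v_s\|\le\lambda_j$. They cannot all lie in $\mathrm{span}\{b_1,\dots,b_{j-1}\}$, so some $v_s=\sum_i c_i b_i$ has a non-zero integer coefficient $c_\ell$ for an index $\ell\ge j$; choosing $\ell$ maximal with $c_\ell\neq 0$, the Gram--Schmidt coefficient of $b_\ell^*$ in $v_s$ equals $c_\ell$, so $\|v_s\|\ge|c_\ell|\,\|b_\ell^*\|\ge \|b_\ell^*\|$. Hence $\lambda_j\ge \|b_\ell^*\|\ge \min_{l\ge j}\|b_l^*\|$. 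Picking $l_0\ge j$ attaining this minimum and applying the monotonicity estimate to each $i\le j\le l_0$, we obtain $\|b_i^*\|\le k^{(n/k)\ln k+O(n/k)}\,\lambda_j$. Plugging this into the first step and absorbing the harmless factor $\sqrt{n}$ (whose $\log_k$ is $O(n/k+\ln k)$ in the regime where the bound is non-trivial) produces the stated inequality.

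The main obstacle I anticipate is the bookkeeping in the chained monotonicity estimate: one must take care when $l$ and $l'$ straddle several blocks to see that the within-block KZ loss is never worse than $\sqrt{\alpha_k}$ on a single segment, so that the total accumulated factor is only $(2\alpha_k)^{O(n/k)}$ rather than the much larger bound one would get by a naive step-by-step comparison $\|b_i^*\|/\|b_{i+1}^*\|$. The key conceptual point that unlocks this is the descent property of KZ-reducedness to trailing sub-bases, which lets one apply a single $\alpha_k$-bound to an arbitrary pair of indices inside the same block.
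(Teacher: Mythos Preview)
Your proposal is correct and follows essentially the same approach as the paper: both arguments combine the standard lower bound $\lambda_j \ge \|b_\ell^*\|$ for some $\ell\ge j$ (via linear independence), the chained within-block/across-block comparison $\|b_i^*\|^2 \le (2\alpha_k)^{O(n/k)}\|b_\ell^*\|^2$, and size-reducedness to control $\|b_j\|$ by the Gram--Schmidt norms. The only cosmetic difference is that the paper bounds $\|b_j\|\le \sum_{s\le j}\|b_s^*\|$ and then sums the resulting geometric-type series in $\lfloor s/k\rfloor$, whereas you take the cruder $\|b_j\|\le \sqrt{n}\,\max_{i\le j}\|b_i^*\|$; both routes land inside the same $k^{(n/k)\ln k + O(n/k+\ln k)}$ envelope.
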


The essence is a combination of the proofs of Theorems 2.3 and 3.4 in ~\cite{Schnorr}.

\begin{proof}
Fix $1\le j\le n$. Suppose that $v_1,\dots, v_j$ are linearly independent vectors in the lattice with $||v_i||=\lambda_i$ for $i=1,\dots, j$.
For each $i$, write $v_i = \sum_{s=1}^n c_{s,i}b_s$.
Let $s_i$ be the largest index for which $c_{s_i,i}\neq 0$. There is some $i=1,\dots, j$ so that $s_i\ge j$ by linear independence. If $c_{s,i}^*$ is so that $v_i = \sum_{s=1}^nc_{s,i}^*b_s^*$, then $c_{s_i,i}=c_{s_i,i}^*\in \mathbb Z$.
This implies that
$\lambda_j = ||v_j||\ge  ||v_i||\ge||c_{s_i,i}^*b_{s_i}^*||\ge ||b_{s_i}^*||$. Now note that for each $0< s<t\le k$ and $m$ so that $(m+1)k\le n$, then $||b_{mk+s}^*||\le \alpha_k||b_{mk+t}^*||$ as stated in the proof of Theorem 2.3 of ~\cite{Schnorr} and call this inequality $(\ast)$.
\newline

Using the fact that $b_1,\dots, b_n$ is size reduced,
$$||b_j||=\left|\left|b_j^*+\sum_{s=1}^j\mu_{j,s}b_s^*\right|\right|\le \sum_{s=1}^j||b_s^*||.$$

To bound $||b_s^*||$ for $1\le s\le j$, suppose $s=m_1k+t_1$ and $s_i=m_2k+t_2$ for $0\le t_2,t_2<k$. Then
\begin{align*}
    ||b_s^*|| &\le \alpha_k||b_{(m_1+1)k}^*|| &\text{by property (KZ) and }(\ast)\\
    &\le 2\alpha_k ||b_{(m_1+1)k+1}^*||&\text{by }(6)\\
    &\le (2\alpha_k)^{m_2-m_1}||b_{m_2k+1}||&\text{by induction}\\
    &\le (2\alpha_k)^{m_2-m_1}\alpha_k||b_{s_i}^*||&\text{by property (KZ) and }(\ast)\text{ since }j\le s_i\\
    &\le (2\alpha_k)^{\frac{n}{k}-\lfloor\frac{s}{k}\rfloor}\alpha_k\lambda_j&\because ||b_{s_i}^*||\le \lambda_j.
\end{align*}
Since $\alpha_k \le k^{1+\log k}$, we have $||b_s^*||\le k^{(\frac{n}{k}+1)(\log k+2)-\lfloor\frac{s}{k}\rfloor\log k}$.
Therefore,
\begin{align*}
||b_j||&\le k^{\frac{n}{k}\log k + O(\frac{n}{k}+\log k)}\sum_{s=0}^\infty k^{-\lfloor\frac{s}{k}\rfloor \log k}\lambda_j\\
&\le k^{\frac{n}{k}\log k + O(\frac{n}{k}+\log k)}k\sum_{s=0}^\infty k^{-s\log k}\lambda_j&\text{since every group of }k\text{ is equal}\\
&\le k^{\frac{n}{k}\log k +O(\frac{n}{k}+\log k)}\left(1-k^{-\log k}\right)^{-1}\lambda_j\\
&\le k^{\frac{n}{k}\log k +O(\frac{n}{k}+\log k)}\lambda_j&\text{since }(1-k^{-\log k})^{-1}\le 3\text{ for }k\ge 2.
\end{align*}
This completes the proof.
\end{proof}

According to Schnorr's paper, this algorithm takes $O(n^4\log B+k^{\frac{k}{2}+o(k)}n^2\log B)$ arithmetical steps on $O(n\log B)$ integers where $B$ is the Euclidean length of the longest input vector. In our case, $B$ is a constant depending on $\mu$. Taking $k=\frac{\log(n)}{\log(\log(n))}$, this is $O(n^4 + n^2\log(n)^{\frac{3\log n}{2\log\log n}})$ which is contained in $O(n^4)$ and thus this choice of $k$ makes the algorithm polynomial time in $n$.
In this case, the factor in Theorem \ref{factors} is at most $e^{\frac{2n\log(\log (n))^{3}}{\log(n)}}$ for large enough $n$; in particular, it is sub-exponential.

\subsection{Geometry of numbers}
There are two different sub-methods of our algorithm regarding geometry. The first finds a short basis of a given lattice.
The second finds an integral polynomial near a real polynomial where distance is measured with respect to a given basis. In these two algorithms, $P$ is an integral, square-free, monic polynomial with roots $\alpha_1, \dots, \alpha_n$.

\subsubsection{Finding Short Lattice Bases}\label{lattice_bases}
To find short lattice bases, we implement a version of Proposition ~\ref{mink}. This theorem shows the existence of a short basis of polynomials in a given convex space. In particular, let $K$ be the set of real polynomials of degree at most $n$ with $\log|P(x)|\le nU_\mu(x)$.
There are linearly-independent, integral polynomials $P_0,\dots,P_n$ so that for each $0\le k\le n$, if $P_k\in \lambda K$ where $\lambda\in\mathbb{R}$ is minimal, then there is no integral polynomial in $\rho K$ for any $\rho<\lambda$ linearly-independent from $P_0,\dots, P_{k-1}$. We showed that the largest of these $\lambda$'s is sub-exponential in $n$ in Proposition~\ref{upperbound}. Thus $\{P_0,\dots, P_n\}$ forms a basis of lattice points in a subexponential multiple of $K$. By Theorem \ref{factors}, we can compute explicit integral polynomials which are within a sub-exponential factor of $K$ in polynomial time in $n$. This is formalized in Corollary \ref{find_lattice_basis}.
For the rest of this section, suppose $\Sigma\subset\mathbb C$ is compact with empty interior and connected complement.

\begin{corollary}\label{find_lattice_basis}
If $\mu\in\mathcal B_\Sigma$ is a H\"older probability measure, then we can compute a basis $Q_0,\dots, Q_n$ of integral polynomials of degree at most $n$ in polynomial time where each $Q_i\in e^{Cn\frac{\log(\log(n))^3}{\log(n)}}K_n$ for some fixed constant $C$ depending only on $\Sigma$ and $\mu$. More specifically, this runs in $O(n^4)$ time and assuming $n$ is sufficiently large, we can take $C=3$.
\end{corollary}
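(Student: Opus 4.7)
The plan is to apply Schnorr's lattice reduction algorithm, as analyzed in Theorem~\ref{factors}, to the lattice $\Gamma_n$ of integral polynomials of degree at most $n$ measured with respect to the Minkowski functional $\|\cdot\|_{K_n}$. Since Schnorr's algorithm is naturally formulated for the Euclidean norm, I would first reduce to the Euclidean case via an ellipsoidal approximation of $K_n$. By John's theorem (applied to the symmetric convex body $K_n\subset\mathbb{R}^{n+1}$), there exists an ellipsoid $E$ with $E\subseteq K_n\subseteq\sqrt{n+1}\,E$. Applying the linear transformation $T$ that maps $E$ to the Euclidean unit ball converts the problem to one on the lattice $T(\Gamma_n)$ equipped with the Euclidean norm, and under this transformation the successive minima with respect to $K_n$ and with respect to Euclidean length of $T(\Gamma_n)$ differ by at most a factor of $\sqrt{n+1}$.

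Next, I would run Schnorr's algorithm on $T(\Gamma_n)$ with parameter $k=\lfloor\log(n)/\log\log(n)\rfloor$, which by the runtime analysis immediately following Theorem~\ref{factors} completes in $O(n^4)$ arithmetical operations. Theorem~\ref{factors} yields a basis $\widetilde Q_0,\dots,\widetilde Q_n$ of $T(\Gamma_n)$ satisfying $\|\widetilde Q_i\|_2\le k^{(n/k)\log k+O(n/k+\log k)}\,\widetilde\lambda_i$, and this Schnorr factor is at most $e^{2n\log(\log(n))^3/\log(n)}$ for large enough $n$. Pulling back via $T^{-1}$ produces the desired integral polynomials $Q_0,\dots,Q_n\in\Gamma_n$, and the transfer inequalities from the previous paragraph give $\|Q_i\|_{K_n}\le e^{2n\log(\log(n))^3/\log(n)}\cdot\sqrt{n+1}\cdot\lambda_i$, where $\lambda_i$ is the $i$-th successive minimum of $\Gamma_n$ with respect to $K_n$.

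To close the argument, I would apply Proposition~\ref{upperbound}, which guarantees $\lambda_i\le\lambda_n\le n^{cn^{1-\delta/12}}$ for a constant $c$ depending only on $\mu$. For sufficiently large $n$ one has $\log(n)^2/\log(\log(n))^3\le n^{\delta/12}$, so $cn^{1-\delta/12}\log n\le n\log(\log(n))^3/\log(n)$, and therefore $\lambda_n\le e^{n\log(\log(n))^3/\log(n)}$; the ancillary polynomial factor $\sqrt{n+1}$ is absorbed into the same sub-exponential envelope. Combining with the Schnorr factor yields $\|Q_i\|_{K_n}\le e^{3n\log(\log(n))^3/\log(n)}$, establishing $C=3$ for all sufficiently large $n$. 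For smaller $n$ one may just use any larger fixed constant $C$.

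The main obstacle is the passage in Step~1 from the Euclidean norm to the $K_n$-norm in a way that does not corrupt the sub-exponential bound. The decisive observation is that John's theorem introduces only a $\sqrt{n+1}$ distortion, which is polynomial in $n$ and therefore dominated by the target envelope $e^{n\log(\log(n))^3/\log(n)}$. Once this is noted, the proof is a clean composition of Proposition~\ref{upperbound} (controlling the sizes of the successive minima) with Theorem~\ref{factors} (controlling how well Schnorr reduction approximates them), where the essential numerical check is that $n^{cn^{1-\delta/12}}$ is absorbed into $e^{n\log(\log(n))^3/\log(n)}$ for large $n$.
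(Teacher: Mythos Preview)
Your approach is mathematically sound for the size bound but differs from the paper in one essential respect, and that difference creates a gap in the algorithmic claim.

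The paper does not invoke John's theorem at all. Instead, it uses the explicit simplex basis $\mathcal{S}=\{p_{M,n,\mu}^{efg\pm}\}$ constructed in~\eqref{pmdef} with $M=\lfloor n^{1/3}\rfloor$: one writes the standard basis $\mathcal{E}=\{1,x,\dots,x^{n-1}\}$ in coordinates with respect to $\mathcal{S}$ and applies Schnorr directly. The point is that Proposition~\ref{simplex} already guarantees each $p_{M,n,\mu}^{efg\pm}\in e^{Cn^{1-\delta/6}\log n}K_n$, so being short in $\mathcal{S}$-coordinates translates immediately into membership in a controlled dilate of $K_n$. This change of basis is explicitly computable from the sample points $z_{ijk}$, so the only nontrivial algorithmic cost is Schnorr's $O(n^4)$.

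Your route through John's theorem recovers the same numerical bound, since the $\sqrt{n+1}$ distortion is indeed negligible. The problem is that John's theorem is existential: you never say how to compute the ellipsoid $E$ (equivalently the transformation $T$) for the body $K_n$, which is defined by infinitely many constraints $|p(z)|\le e^{nU_\mu(z)}$. Without this step your algorithm is not specified, and the $O(n^4)$ runtime claim is unsupported. One could in principle appeal to approximate John-ellipsoid algorithms via a separation oracle for $K_n$, but you would then have to argue that such an oracle is available in the required time and that the resulting overall complexity is still $O(n^4)$; none of this is addressed. The paper's use of the already-constructed simplex basis $\mathcal{S}$ is precisely what sidesteps this issue.
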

\begin{proof}
First compute the basis $p_{\mathcal{Z}}^{efg\pm}$ of polynomials of degree less than $n$ as defined in \eqref{pmdef} choosing $M = \lfloor n^{1/3}\rfloor$. Call this basis $\mathcal S$. We want to find an integral basis which is short in the basis $\mathcal S$ by Proposition \ref{simplex}.
Now write $\mathcal E:=\{1,x,\dots, x^{n-1}\}$ in the basis $\mathcal S$ and apply Schnorr's algorithm to get a semi $k$-reduced basis choosing $k=\frac{\log n}{\log\log n}$. We showed at the end of Section \ref{lattice_alg} that this runs in $O(n^4)$ time and that the output polynomials $Q_0,\dots, Q_n$ are so that $Q_i\in e^{2n\cdot\frac{\log(\log(n))^3}{\log n}}n^{cn^{1-\frac{\delta}{12}}}K_n$ by Proposition \ref{upperbound} for each $i$ and fixed $c$ for large enough $n$.
It follows that $Q_i\in e^{3n\cdot\frac{\log(\log(n))^3}{\log n}}K_n$ for large enough $n$.
\end{proof}

One way of computing the integral polynomials is as follows. Suppose $T$ is a matrix so that $AT$ is a reduced form (as in Schnorr's algorithm) of $A$ where $A$ is the matrix of $\mathcal E$ written in base $\mathcal S$.
Then $AT$ has linearly independent columns representing such polynomials in base $\mathcal S$. So $T$ has columns representing those polynomials in base $\mathcal E$ as desired. This gives the desired $n+1$ linearly independent integer polynomials.
Furthermore, note that that doing the change of basis via matrix inversion may not be numerically stable for large matrices. Fortunately, we can compute the change of basis matrix explicitly. As is typically done, the time bounds given in this paper are given in the number of arithmetic operations. Numerical stability is not studied in this paper.

\subsubsection{Finding Close Integer Polynomials}\label{int_poly}
The following discussion allows us to algorithmically produce polynomials like those whose existence is proved in Theorem 3.2 of \cite{Smith}. The intention of this method is to find an integral polynomial close to a given real polynomial in polynomial time where distance is measured with respect to a given basis. In our case, we use the basis produced by Corollary \ref{find_lattice_basis}.

\begin{theorem}\label{close_poly}
Given a H\"older probability measure $\mu\in\mathcal B_\Sigma$, there is a polynomial time algorithm which takes in a real polynomial $Q(z)$ of degree less than $n$ and produces an integer polynomial $H$ of degree less than $n$ such that $\frac{\log(Q-H)(x)}{n}\le U_\mu(x) + Cn\cdot \frac{\log(\log(n))^3}{\log(n)}$. The run-time is $O(n^4)$ where $C$ is dependent only on $\mu$ and $\Sigma$.
Assuming $n$  is sufficiently large, we can choose $C=4$.
\end{theorem}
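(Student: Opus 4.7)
The plan is to invoke the short basis produced by Corollary~\ref{find_lattice_basis} and then round. Let $Q_0,\dots,Q_n$ be the basis of integral polynomials of degree at most $n$ returned by that corollary, so that each $Q_i$ lies in $e^{Cn\frac{\log(\log(n))^3}{\log(n)}}K_n$ for a constant $C$ depending only on $\mu$ and $\Sigma$ (and $C=3$ works for large $n$). Write the input polynomial $Q(z)$ in this basis as
\[
Q = \sum_{i=0}^{n} a_i Q_i, \qquad a_i\in\mathbb{R},
\]
which amounts to solving one triangular/linear system over $\mathbb{R}$. Round coordinates and set
\[
H := \sum_{i=0}^{n} \lfloor a_i\rceil Q_i \in \mathbb{Z}[x], \qquad \deg H \le n.
\]

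The error polynomial is $Q-H = \sum_{i=0}^{n}(a_i-\lfloor a_i\rceil)Q_i$, where each $|a_i-\lfloor a_i\rceil|\le 1/2$. Since $K_n$ is convex and symmetric, and each $Q_i$ lies in $e^{Cn\frac{\log(\log(n))^3}{\log(n)}}K_n$, the triangle inequality in $K_n$ gives
\[
Q-H \;\in\; \frac{n+1}{2}\, e^{Cn\frac{\log(\log(n))^3}{\log(n)}}\, K_n,
\]
i.e.\ for every $z\in\mathbb{C}$,
\[
\frac{\log|(Q-H)(z)|}{n} \;\le\; U_\mu(z) + \frac{\log((n+1)/2)}{n} + C\,\frac{\log(\log(n))^3}{\log(n)}.
\]
The extra $\log(n)/n$ term is absorbed into the main error by slightly enlarging $C$ (taking $C=4$ works for $n$ sufficiently large, matching the claim), yielding the required inequality.

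For the runtime: Corollary~\ref{find_lattice_basis} produces the basis in $O(n^4)$ arithmetic operations; expressing $Q$ in this basis and rounding is a $\mathrm{poly}(n)$ computation (dominated by $O(n^3)$ for the change of basis). So the total cost remains $O(n^4)$, which is polynomial in $n$ as stated.

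The only mildly delicate point is the bookkeeping of the change of basis from $\{1,x,\dots,x^n\}$ to $\{Q_i\}$, which we already need to carry out stably inside the Schnorr reduction step of Corollary~\ref{find_lattice_basis}; once that is in hand, the rest of the argument is routine convexity of $K_n$ combined with the sub-exponential bound on $\|Q_i\|$ in $K_n$-norm. No deeper obstacle arises because all the difficult work — producing integer lattice vectors whose $K_n$-norm is sub-exponential — has been done in Corollary~\ref{find_lattice_basis} via Proposition~\ref{upperbound}.
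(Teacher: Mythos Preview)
Your proposal is correct and follows essentially the same approach as the paper: write $Q$ in the short integral basis produced by Corollary~\ref{find_lattice_basis}, round each coordinate to the nearest integer to obtain $H$, and bound $Q-H$ using convexity of $K_n$ together with the sub-exponential bound on the $\|Q_i\|_n$. The only cosmetic difference is that the paper applies the corollary in one lower dimension (so the basis is $Q_0,\dots,Q_{n-1}$ and $\deg H<n$ automatically), whereas you index up to $n$; adjusting this is trivial.
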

\begin{proof}
Let $Q_0,\dots, Q_{n-1}$ be the output of our algorithm described in Corollary \ref{find_lattice_basis}. Call this basis $\mathcal Q$. Suppose the $j$-th coordinate of $[Q]_\mathcal Q$ is $c_j$.
Take $\tilde c_j$ to be the nearest integer (rounding up if $c_j\in\frac{1}{2}\mathbb{Z}$), and let $H$ be the polynomial so that $[H]_\mathcal Q=\begin{pmatrix}\tilde c_0\\\vdots\\\tilde c_{n-1}\end{pmatrix}$.
Then $H$ is an integer polynomial since $[H]_\mathcal Q$ is an integer linear combination of the integral polynomials in the basis $\mathcal Q$.
We know $Q-H = \sum_{j=1}^n(c_j-\tilde c_j)Q_j$. Thus $Q-H\le \frac{n}{2}e^{3n\cdot\frac{\log(\log(n))^3}{\log(n)}}e^{nU_\mu}$ and so $Q-H\le e^{\frac{4n\log(\log(n))^3}{\log(n)}}e^{nU_\mu}$ for large enough $n$ as desired.
For an analysis of the run-time, we note that Schnorr's algorithm is the bottleneck. It runs in $O(n^4)$ time by Corollary~\ref{find_lattice_basis}.
\end{proof}

\subsection{Main Algorithm}
Here we prove Theorem \ref{main2}. We have already covered the main concepts. By Corollary \ref{find_lattice_basis}, we can get a basis of integral polynomials in a sub-exponential multiple of $K_n$. If we now apply Theorem \ref{close_poly} to $\frac{1}{4}(p_{\mathcal{Z}} - x^n)+\frac{1}{2}$ to get $H(x)$, we could output $P_n(x):=x^n+4H(x)-2$. This gives us an Eisenstein polynomial in a sub-exponential multiple of $K_n$. If we simply want a polynomial $P$ with small $n$-norm, this would be sufficient; however, we have to introduce some extra steps to ensure $\text{supp}(\mu_{P_n})\subset D$ for a given open $D\supset \Sigma$. The proof will imitate the proof of Theorem \ref{main1}.

\begin{proof}[Proof of Theorem \ref{main2}]
Suppose that $\mu\in\mathcal B_\Sigma$ is a H\"older measure with exponent $\delta$, let $D$ be any open set containing $\Sigma$, and choose $\rho>0$ so that $\Sigma(\rho)\subset D$.
Choose $\mathcal Z=\{x_1,\dots, x_{n-m}\}$ as described in section \ref{bound_Kn} with $L= 1$.
Apply Proposition \ref{chpol} to $p_{\mathcal Z}$ with $m$ chosen later to get the polynomial $Q_m(x)$.
Note that the proof of Proposition \ref{chpol} explicitly computes $Q_m(x)$ in polynomial time.
As in the proof of Theorem \ref{main1}, let $p_{\mathcal Z}(x)Q_m(x)$ be given by $x^n+\sum_{i=0}^{n-1}a_ix^i$.
Apply Theorem \ref{close_poly} to $$h(x):=\frac{1}{4}\left(p_{\mathcal Z}(x)Q_m(x)-x^n-\sum_{i=n-m}^{n-1}a_ix^i\right)+\frac{1}{2}$$
to get an integer polynomial $H(x)$. Finally,
$$p(x):=x^n+\sum_{i=n-m}^{n-1}a_ix^i + 4H(x)-2$$
is an Eisenstein polynomial at $2$.
We see that
\begin{equation}\label{close_res}
|p_{\mathcal Z}(x)Q_m(x)-p(x)|= |4\left(h(x) - H(x)\right) - 2| \le e^{Cn\cdot\frac{\log(\log(n))^3}{\log(n)}}e^{(n-m)U_\mu(x)}\end{equation}
for some constant $C$ depending only on $\mu$ and $\Sigma$ by Theorem \ref{close_poly}. As in the proof of Theorem \ref{main1}, we have
$$|p_{\mathcal Z}(x)Q_m(x)|\ge \kappa^mn^{-cn^{1-\frac{\delta}{6}}}e^{(n-m)U_\mu(x)}$$
for every $x\not\in \Sigma(\rho)$ where $M=\lfloor n^{1/3}\rfloor$.
For large enough $n$, we can choose $m<n$ so that $m=\lceil\frac{2Cn}{\log(\kappa)}\cdot \frac{\log(\log(n))^3}{\log(n)}\rceil$ where we choose $C=4$ as in Theorem \ref{close_poly}.
Then for sufficiently large $n$,
$$|p_{\mathcal Z}(x)Q_m(x)|\ge|p_{\mathcal Z}(x)Q_m(x)-p(x)|$$ for all $x\not\in \Sigma(\rho)$.
By Rouch\'e's theorem, since $p_{\mathcal Z}(x)Q_m(x)$ has all its roots in $\Sigma(\rho)$, so does $p(x)$.
  Furthermore by \eqref{close_res}, $$||p||_n\le e^{Cn\cdot\frac{\log(\log(n))^3}{\log(n)}} + ||p_{\mathcal Z}(x)Q_m(x)||_n\le e^{\tilde{C}n\cdot\frac{\log(\log(n))^3}{\log(n)}}$$
  for some constant $\tilde C$ dependent only on $\Sigma$ and $\mu$.
\end{proof}

\section{Applications}\label{applications}

\subsection{Numerical Data}
We implemented the algorithm described above with some simplifications.\footnote{All examples in section \ref{applications} can be found at https://github.com/Bryce-Orloski/Limiting-Distributions-of-Conjugate-Algebraic-Integers-Applications.} Firstly, we used the LLL-algorithm implemented in PARI/GP named \texttt{qflll}. Secondly, instead of choosing $z_{ijk}$ as indicated in the paper, we use reasonable sampling methods as described in each numerical example given. We also do not apply the extra step of forcing the roots in a $\rho$-neighborhood of the measure's support. So these algorithmic outputs do not employ the full power of the algorithm, but as we see, it still outputs strong results quickly.
All of the examples given in this subsection were computed using Pennsylvania State University's ROAR servers and gave the output in at most two minutes.
\newline

The first example we discuss is pictured in Figure \ref{interval}.
The measure depicted is one in a family which was constructed by Serre~\cite{MR2428512}. In particular, it is the probability distribution $\mu$ on $\Sigma=[a,b]$ with $\mu=c\mu_{[a,b]}+(1-c)\nu_{[a,b]}$ where
 $a=0.1715,$ $ b=5.8255,$  $c=0.5004,$ $\mu_{[a,b]}$ is the equilibrium measure on $[a,b]$, and $\nu_{[a,b]}$ is the pushforward of the equilibrium measure on $[b^{-1},a^{-1}]$ under the map $z\to 1/z$.
 We compute the sample points of $\mu$ using by taking the $k^{th}$ sample point to be the inverse distribution of $\mu$ at $k/n$. The algorithm ran on the ROAR servers for roughly $3$ seconds when asked to compute a degree 100 polynomial (with the sample points pre-computed). The output polynomial has all real roots with their histogram being displayed in Figure \ref{fig2}. The endpoints and interval width in Figure \ref{fig2} are approximate.
 \newline

\begin{figure}[t]
     \centering
     \begin{subfigure}[b]{0.3\textwidth}
         \centering
         \includegraphics[width=\textwidth]{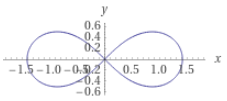}
         \caption{Plot of $|z^2-1|=1$}
         \label{lemniscate_wolfram}
     \end{subfigure}
     \hfill
     \begin{subfigure}[b]{0.3\textwidth}
         \centering         \includegraphics[width=\textwidth]{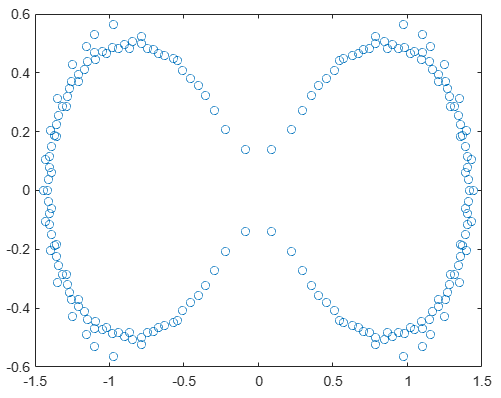}
                \caption{Plotted roots or degree 200 polynomial}
                \label{lemniscate_output}
     \end{subfigure}
     \hfill
     \begin{subfigure}[b]{0.3\textwidth}
         \centering         \includegraphics[width=\textwidth]{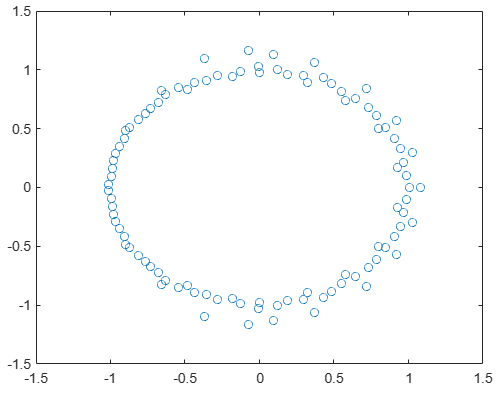}
                \caption{Mapping of output roots via $z\mapsto z^2-1$}
        \label{circle_unif}
     \end{subfigure}
\caption{These demonstrate the effectiveness of the algorithm on a lemniscate.}
\label{lemniscate}
\end{figure}
The second example is pictured in Figure \ref{lemniscate}. Figure \ref{lemniscate_wolfram} depicts the support of the pull-back of the uniform distribution on the unit circle via $|z^2-1|=1$. We sampled this support via this distribution and ran the algorithm on 200 samples. It finished in under two minutes and a plot of its roots are given in Figure \ref{lemniscate_output}.
We want this to sample the pull-back of the uniform distribution on the circle. To see that this is well sampled, Figure \ref{circle_unif} plots the image of these roots under the map $z\mapsto z^2-1$ and we see that it roughly approximates the uniform distribution on the unit circle.
\newline


Our last example is depicted in Figure \ref{annulus}. Here we sampled the annulus $1\le |z|\le 2$ with 100 complex numbers by sampling 50 using the Monte Carlo method and taking their complex conjugates. This is shown in Figure \ref{annulus_pivots}. Using these samples in our algorithm, it returned a degree 100 monic, integral, irreducible polynomial in under two minutes whose roots are plotted in Figure \ref{annulus_output}.
Note that Theorem \ref{main2} only guarantees convergence of the algorithm in polynomial time for $\Sigma$ having empty interior and connected complement. The annulus has neither of these properties though the algorithm still performs well.
\newline
\begin{figure}[t]
     \centering
     \begin{subfigure}[b]{0.4\textwidth}
         \centering
         \includegraphics[width=\textwidth]{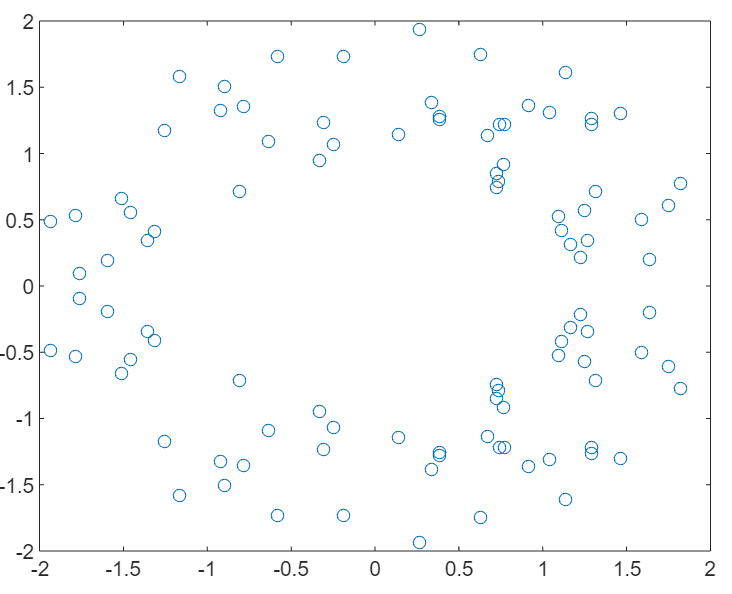}
         \caption{Plot of 50 sampled points from the annulus $1\le |z|\le 2$ and their conjugates}
         \label{annulus_pivots}
     \end{subfigure}
     \hfill
     \begin{subfigure}[b]{0.4\textwidth}
         \centering         \includegraphics[width=\textwidth]{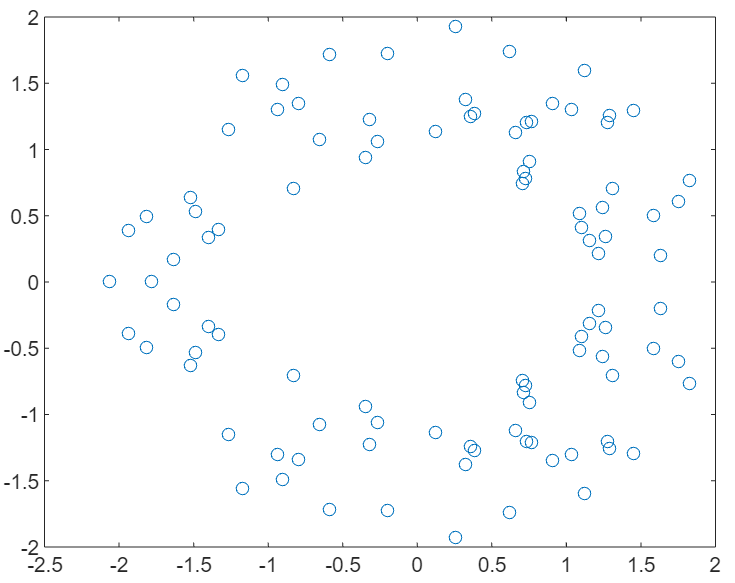}
                \caption{Plotted roots of 100 degree polynomial from algorithm given the samples as input}
                \label{annulus_output}
     \end{subfigure}
\caption{The left shows 50 points sampled uniformly from $1\le |z|\le 2$ and right plots the roots of the output of the algorithm which tried to approximate the distribution of the left plot.}
\label{annulus}
\end{figure}

Notice how we cannot provably show the desired convergence with the LLL-algorithm, and even with Schnorr's algorithm, the convergence we prove is very slow. However, here we are applying the LLL-algorithm and in each of Figures \ref{interval}, \ref{circle_unif}, and \ref{annulus}, we see that the plots of the roots of the polynomial produced by the algorithm are very close to the plots of the corresponding samples.

\subsection{Applications to Abelian varieties}
Recently, Shankar and  Tsimerman~\cite{shankar_tsimerman_2018}, \cite{Tsimerman1} conjectured that for every $g\geq 4$, every Abelian variety defined over $\overline{\mathbb{F}_q}$ is isogenous to the Jacobian of a curve defined over $\overline{\mathbb{F}_q}.$ Given a finite field $\mathbb{F}_q$ and any arbitrary large extension $\mathbb{F}_{q^n}$, we prove that there are infinitely many abelian varieties over $\mathbb{F}_q$ which are not isogenous to the Jacobian of any curve over $\mathbb{F}_{q^n}$. We use Honda Tate theory and construct an arithmetic measure with some conditions on its moments.  First, we introduce our moment conditions which are motivated by the work of  Tsfasman and Vladut~\cite{MR1465522}.
\\

 Let $X$ be a finite curve defined over $\mathbb{F}_q$. Let 
\(
N_r(X):= \# X(\mathbb{F}_{q^r})
\)
be the number of $\mathbb{F}_{q^r}$ of $X.$
By Weil's formula, we have
\begin{equation}\label{positivity}
0
\leq N_r(X)=q^r+1-2q^{r/2}\sum_{j=1}^g\cos(r2\pi \theta_j),    
\end{equation}
where $\sqrt{q}e^{2\pi i \theta_i}$ are Frobenius eigenvalues.
We have 
\[
N_r(X)=\sum_{d|r}dM_d(X),
\]
where $M_d(X)$ is the number of points with degree $d.$ By Mobius inversion formula,
\[
M_r(X)=\frac{1}{r}\sum_{d|r} \mu(d)N_{\frac{r}{d}}(X).
\]
In particular, we have 
\[
\sum_{d|r} \mu(d)N_{\frac{r}{d}}(X) \geq 0
\]
for every $r\geq0.$ 
\\

Let $S_{\sqrt{\alpha}}:=\left\{ \sqrt{\alpha}e^{2\pi i\theta} : \theta \in [0,1) \right\}$ be the circle of radius $\sqrt{\alpha}$ centered at the origin, and define the following probability measure on $S_{\sqrt{\alpha}}$
\[
d\mu_q:=\left(1+\sum_{k=1}^{\infty}\frac{c}{k^2}\cos(k2\pi \theta)\right) d\theta.
\]
where $c=\frac{6}{\pi^210}.$ Note that
\begin{equation}\label{bound}
    0 \leq 0.9 d\theta  \leq d\mu_q \leq 1.1 d\theta.
\end{equation}

\begin{proposition}\label{arithme} Suppose that $\alpha>1.65$ then
 $U_{\mu_\alpha}(z)\geq 0$ for any $z\in \mathbb{C}.$ As a result, $d\mu_\alpha$ is an arithmetic measure. 
\end{proposition}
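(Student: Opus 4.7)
The plan is to compute the logarithmic potential $U_{\mu_\alpha}(z)$ explicitly via Fourier analysis. Decompose $d\mu_\alpha = d\lambda + d\nu$, where $d\lambda$ is the uniform probability measure on $S_{\sqrt\alpha}$ (giving the classical potential $U_\lambda(z) = \max(\log|z|, \log\sqrt\alpha)$) and $d\nu = \sum_{k\geq 1} (c/k^2)\cos(2\pi k\theta)\,d\theta$. For the oscillatory part, write $z = |z|e^{i\theta_z}$ and use the standard expansion
\[
\log|z - \sqrt\alpha e^{2\pi i\theta}| = \log\max(|z|,\sqrt\alpha) - \sum_{k\geq 1}\frac{r^k}{k}\cos\bigl(k(2\pi\theta - \theta_z)\bigr),
\]
with $r = |z|/\sqrt\alpha$ when $|z|<\sqrt\alpha$ and $r = \sqrt\alpha/|z|$ when $|z|>\sqrt\alpha$. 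Orthogonality of $\{\cos(2\pi k\theta)\}_{k\geq 1}$ then yields, for $|z|<\sqrt\alpha$,
\[
U_{\mu_\alpha}(z) = \tfrac{1}{2}\log\alpha - \sum_{k\geq 1}\frac{c\cos(k\theta_z)}{2k^3}\left(\frac{|z|}{\sqrt\alpha}\right)^{k},
\]
and a symmetric formula for $|z|>\sqrt\alpha$ with $\sqrt\alpha/|z|$ in place of $|z|/\sqrt\alpha$ and $\log|z|$ in place of $\log\sqrt\alpha$; continuity of the potential (the density is bounded) extends the identity to $|z|=\sqrt\alpha$.

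Next I would bound the perturbation uniformly. Since the geometric ratio in each formula is at most $1$ and $|\cos(k\theta_z)|\leq 1$, in either regime
\[
\left|U_{\mu_\alpha}(z) - \log\max(|z|,\sqrt\alpha)\right| \;\leq\; \sum_{k\geq 1}\frac{c}{2k^3} \;=\; \frac{c\zeta(3)}{2}.
\]
Hence $U_{\mu_\alpha}(z)\geq \tfrac{1}{2}\log\alpha - \tfrac{c\zeta(3)}{2}$ for all $z\in\mathbb{C}$. Plugging in $c = \tfrac{3}{5\pi^2}$ gives $\tfrac{c\zeta(3)}{2} = \tfrac{3\zeta(3)}{10\pi^2} < 0.04$, while $\tfrac{1}{2}\log(1.65) > 0.25$, so the right-hand side is strictly positive once $\alpha>1.65$. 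This proves $U_{\mu_\alpha}(z)\geq 0$ throughout $\mathbb{C}$.

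To deduce that $\mu_\alpha$ is arithmetic, observe first that the density of $\mu_\alpha$ is an even function of $\theta$, so $\mu_\alpha$ is symmetric about the real axis. For any nonzero $Q(x) = a\prod_{i=1}^{d}(x-\beta_i) \in \mathbb{Z}[x]$, pointwise non-negativity of the potential gives
\[
\int \log|Q(x)|\,d\mu_\alpha(x) \;=\; \log|a| + \sum_{i=1}^{d} U_{\mu_\alpha}(\beta_i) \;\geq\; 0,
\]
so $\mu_\alpha \in \mathcal{B}_{S_{\sqrt\alpha}}$. The circle $S_{\sqrt\alpha}$ has transfinite diameter $\sqrt\alpha > 1$, so Theorem~\ref{general} applies and gives $\mathcal{A}_{S_{\sqrt\alpha}} = \mathcal{B}_{S_{\sqrt\alpha}}$; in particular $\mu_\alpha$ is arithmetic. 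The whole argument is essentially a routine Fourier computation plus one numerical inequality; I do not anticipate any serious obstacle, since the threshold $\alpha>1.65$ is even looser than what the bound forces.
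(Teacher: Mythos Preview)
Your proof is correct and takes a genuinely different route from the paper's. The paper does not compute $U_{\mu_\alpha}$ at all: it only uses the crude density bounds $0.9\,d\theta \le d\mu_\alpha \le 1.1\,d\theta$. For $|z|\ge 2\sqrt\alpha$ the integrand is pointwise $\ge \log\sqrt\alpha$; for $|z|<2\sqrt\alpha$ the paper splits the integral according to whether $|z-\sqrt\alpha e^{2\pi i\theta}|$ is below or above $1$, applies the upper density bound on the first piece and the lower one on the second, and regroups to obtain
\[
U_{\mu_\alpha}(z)\;\ge\;1.1\log\sqrt\alpha\;-\;0.2\log(3\sqrt\alpha)\;=\;0.9\log\sqrt\alpha-0.2\log 3,
\]
which is positive (just barely) once $\alpha>1.65$.

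Your Fourier computation instead produces an explicit closed form for $U_{\mu_\alpha}$ and the uniform bound $U_{\mu_\alpha}(z)\ge \tfrac12\log\alpha - \tfrac{c\zeta(3)}{2}$. This is considerably sharper: it would give $U_{\mu_\alpha}\ge 0$ already for $\alpha>e^{c\zeta(3)}\approx 1.076$, explaining your remark that the stated threshold $1.65$ is far from tight for your method. The paper's argument is more elementary (no kernel expansion, no orthogonality) but is essentially tuned to make $\alpha=1.65$ work and gives little slack. Your deduction that $\mu_\alpha\in\mathcal{B}_{S_{\sqrt\alpha}}$ and the appeal to Theorem~\ref{general} for the ``arithmetic'' conclusion are also correct and more explicit than what the paper writes.
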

\begin{proof}
It is enough to show that
    $U_{\mu_\alpha}(z)\geq 0$ for any $z\in \mathbb{C}.$ Suppose that $|z|\geq 2\sqrt{\alpha}$, then 
    \[
     U_{\mu_\alpha}(z)=\int_{S_{\sqrt{\alpha}}} \log|z-\sqrt{\alpha}e^{2\pi i\theta}|d\mu_\alpha \geq \log\sqrt{\alpha}\geq 0.
    \]
    Otherwise, suppose that $|z|<2\sqrt{\alpha}.$
    By \eqref{bound}, we have 
    \[
    \begin{split}
            U_{\mu_\alpha}(z)&=\int_{S_{\sqrt{\alpha}}} \log|z-\sqrt{\alpha}e^{2\pi i\theta}|d\mu_\alpha
            \\
            &=\int_{ |z-\sqrt{\alpha}e^{2\pi i\theta}|<1} \log|z-\sqrt{\alpha}e^{2\pi i\theta}|d\mu_\alpha+ \int_{ |z-\sqrt{\alpha}e^{2\pi i\theta}|>1} \log|z-\sqrt{\alpha}e^{2\pi i\theta}|d\mu_\alpha
            \\
            &\geq 1.1 \int_{ |z-\sqrt{\alpha}e^{2\pi i\theta}|<1} \log|z-\sqrt{\alpha}e^{2\pi i\theta}| d\theta + 0.9\int_{ |z-\sqrt{\alpha}e^{2\pi i\theta}|>1} \log|z-\sqrt{\alpha}e^{2\pi i\theta}| d\theta
            \\
            &=1.1\int_{S_{\sqrt{\alpha}}} \log|z-\sqrt{\alpha}e^{2\pi i\theta}|d\theta- 0.2 \int_{ |z-\sqrt{\alpha}e^{2\pi i\theta}|>1} \log|z-\sqrt{\alpha}e^{2\pi i\theta}| d\theta
            \\
            &\geq  1.1\log(\sqrt{\alpha})-0.2\log(3\sqrt{\alpha})>0. 
    \end{split}
    \]
This completes the proof of our propositions. 
\end{proof}

\begin{corollary}\label{frob_equidistribution}
        There exists a sequence $\{A_g\}$ of ordinary abelian varities  over $\mathbb{F}_q$ such that $\dim(A_g)=g$ and their Frobenious eigenvalues equidistributes with $\mu_q$ as $g\to \infty$.
\end{corollary}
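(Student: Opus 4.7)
The plan is to invoke Honda--Tate theory after transferring the construction to a real interval via the substitution $y = x + q/x$, which sends the circle $S_{\sqrt q}$ to $\Sigma':=[-2\sqrt q, 2\sqrt q]$ as a two-to-one map branched at the endpoints.

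First I would introduce $\nu_q := \pi_*\mu_q$, the pushforward of $\mu_q$ under $\pi(z) = z + q/z$; this is a symmetric probability measure on $\Sigma'$. Using the factorization $z^2 - yz + q = (z-w)(z-q/w)$ with $y = w + q/w$ together with $\int\log|z|\,d\mu_q = \tfrac12\log q$, one finds
$$U_{\nu_q}(y) = U_{\mu_q}(w) + U_{\mu_q}(q/w) - \tfrac12\log q.$$
For $y\in\Sigma'$ the lift $w$ lies on $S_{\sqrt q}$ and $q/w = \bar w$, so by the conjugation-symmetry of $\mu_q$ this becomes $2U_{\mu_q}(w) - \tfrac12\log q$. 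The explicit estimate appearing in the proof of Proposition~\ref{arithme} gives $U_{\mu_q}(w) \ge 0.45\log q - 0.2\log 3$, which exceeds $\tfrac14\log q$ for $q\ge 3$; a parallel direct estimate handles $|y|>2\sqrt q$. Hence $U_{\nu_q}\ge 0$ on $\mathbb{C}$ and thus $\nu_q\in\mathcal{B}_{\Sigma'}$.

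Next I would apply Theorem~\ref{main1} in its real-line form to $\nu_q$ on $\Sigma'$ with $D = (-2\sqrt q, 2\sqrt q)$: for each sufficiently large $g$ this yields an irreducible monic $Q_g\in\mathbb{Z}[y]$ of degree $g$ whose roots lie in $(-2\sqrt q, 2\sqrt q)$ and satisfy $\mu_{Q_g}\rightharpoonup\nu_q$. Set $P_g(x) := x^g Q_g(x + q/x) \in\mathbb{Z}[x]$, a monic polynomial of degree $2g$ (integrality follows since $x^k(x+q/x)^k\in\mathbb{Z}[x]$ for $k\ge 0$). Each real root $\alpha$ of $Q_g$ lifts to the conjugate pair $\beta_\pm = (\alpha\pm i\sqrt{4q-\alpha^2})/2\in S_{\sqrt q}$, so every root of $P_g$ is a $q$-Weil number. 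Irreducibility of $P_g$ is immediate: $\mathbb{Q}(\alpha)$ is totally real of degree $g$ while $\mathbb{Q}(\beta_+)/\mathbb{Q}(\alpha)$ is a non-trivial (CM) quadratic extension, so $[\mathbb{Q}(\beta_+):\mathbb{Q}] = 2g$.

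By Honda--Tate the Galois orbit of $\beta_+$ determines a simple isogeny class over $\mathbb{F}_q$. After, if necessary, a mild perturbation of $Q_g$ (using the flexibility of Proposition~\ref{chpolreal}) to ensure that $P_g$ cuts out an ordinary abelian variety, the associated division algebra has trivial Schur index and $P_g$ is the characteristic polynomial of Frobenius of a simple abelian variety $A_g/\mathbb{F}_q$ with $\dim A_g = \deg(P_g)/2 = g$. The Frobenius eigenvalues of $A_g$ are exactly the roots of $P_g$; since $\pi_*\mu_{P_g} = \mu_{Q_g}\rightharpoonup \nu_q = \pi_*\mu_q$ and both $\mu_{P_g}$ and $\mu_q$ are symmetric under the involution $z\mapsto q/z$ on $S_{\sqrt q}$, it follows that $\mu_{P_g}\rightharpoonup\mu_q$. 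The main obstacles are (i) upgrading Proposition~\ref{arithme} to the quantitative bound $U_{\mu_q}\ge\tfrac14\log q$ needed for $\nu_q\in\mathcal{B}_{\Sigma'}$, and (ii) controlling the Schur index in Honda--Tate so as to recover $\dim A_g = g$ rather than $g/e$; the latter is the more delicate step and motivates the ordinariness perturbation at the end.
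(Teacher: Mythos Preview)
Your overall architecture matches the paper's: push $\mu_q$ forward to a real interval via $z\mapsto z+q/z$, invoke the arithmetic-measure machinery there, and pull back via $P_g(x)=x^gQ_g(x+q/x)$. Your explicit identity $U_{\nu_q}(y)=U_{\mu_q}(w)+U_{\mu_q}(q/w)-\tfrac12\log q$ is exactly what underlies the paper's one-line appeal to ``conformal invariance of the potential,'' so that part is fine (indeed more transparent than the paper).

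There is, however, a genuine gap in your Step~3. You propose to apply Theorem~\ref{main1} on $\Sigma'=[-2\sqrt q,2\sqrt q]$ with $D=(-2\sqrt q,2\sqrt q)$, but $D$ does \emph{not} contain $\Sigma'$, so the theorem does not apply. Any admissible open $D\supset\Sigma'$ must strictly overshoot the endpoints, and then nothing prevents $Q_g$ from having a root $\alpha$ with $|\alpha|>2\sqrt q$; its lift under $x+q/x$ is real rather than on $S_{\sqrt q}$, so $P_g$ is no longer a Weil polynomial. This is precisely the obstruction the paper's proof is designed around: it works instead with $\alpha_n:=\sqrt q-\tfrac{1}{10n}$ and the pushforward of $\mu_{\alpha_n}$ to $\Sigma_n\subset(-2\sqrt q,2\sqrt q)$, so that one can take $D\subset(-2\sqrt q,2\sqrt q)$ legitimately, and then recovers $\mu_q$ by a diagonal argument as $n\to\infty$. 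Your direct approach cannot avoid this shrink-and-diagonalize step.

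Two smaller remarks. First, your potential bound only yields $U_{\nu_q}\ge0$ for $q\ge3$; the case $q=2$ needs a separate (easy) estimate. Second, in Honda--Tate the simple abelian variety attached to a Weil number of degree $2g$ has dimension $g\cdot e$ (not $g/e$), where $e$ is the Schur index; so your concern is in the right place but the inequality goes the other way. It is worth noting that the paper's own proof is silent on both the irreducibility of $f_m$ and the index issue, so your flagging of these points is an improvement---but the endpoint problem above has to be fixed first.
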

\begin{proof}
Let $\alpha_n:=\sqrt{q}-\frac{1}{10n},$  $\Sigma_n:=[-2\alpha_n,2\alpha_n]$ and 
\(
h_n(z):=z+\frac{\alpha_n}{z}.
\)
Note that $h_n$ is a conformal map with fixed point at infinity and derivative 1 at infinity that sends $\mathbb{C}\backslash S_{\alpha_n}$ to the $\mathbb{C}\backslash \Sigma_n$.
Let $h_nd\mu_{\alpha_n}$ be the push-forward of $\mu_{\alpha_n}$ by $h_n.$ By conformal in-variance of the potential function and Proposition~\ref{arithme}, $h_n d\mu_{\alpha_n}$ has a positive potential function and hence $h_nd\mu_{\alpha_n}$ is arithmetic.
    It follows from  Theorem~\ref{general} that there exists a sequence of irreducible polynomials $\{p_m\}$ with real roots contained in $\Sigma_n(\frac{1}{100 n})\subset [-2\sqrt{q},2\sqrt{q}]$, and $\gcd(p_m(0),q)=1$ with root distribution converging to $h_nd\mu_{\alpha_n}.$ Let 
    \[
    f_m(x):=x^{\deg{p_m}} p_m(x+\frac{q}{x}).
    \]
    Note that $f_m(x)$ has all its roots on $S_{\sqrt{q}}$, and $\gcd(f_m(0),q)=1$. It follows form a Theorem of Serre ~\cite[Theorem~3.1.2]{serre_curves} in the ordinary case that $f_m$ is the charactristic polynomial of some ordinary isogeny class of abelian variety of dimension $m$.
    Now by a diagonal argument and letting $n\to \infty$ and taking $m$ large enough it follows that there exists a sequence of $\{A_g\}$ of ordinary abelian varities  over $\mathbb{F}_q$ such that $\dim(A_g)= g$ and their Frobenious eigenvalues equidistributes  with $\mu_q$ as $g\to \infty$.
\end{proof}

\begin{theorem}\label{eventually_not_jacobian}
    Let $\{A_g\}$ be any family of abelian varieties over $\mathbb{F}_q$ such that $\dim(A_g)=g$ and their Frobenius eigenvalues distribution converges to $\mu_q.$ Given any integer $r\geq 0$, there exists $N$ such that if $g\geq N$ then $A_g$ is not isogenous to the Jacobian of any curve over $\mathbb{F}_{q^r}$.
\end{theorem}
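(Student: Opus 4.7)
The plan is to argue by contradiction, using Honda--Tate theory to pin down the Frobenius eigenvalues of the putative Jacobian and then extracting a violation of Weil's point-count positivity from the limiting measure $\mu_q$. Suppose for contradiction that for arbitrarily large $g$, the base change $A_g \times_{\mathbb{F}_q} \mathbb{F}_{q^r}$ is isogenous (over $\mathbb{F}_{q^r}$) to $\mathrm{Jac}(X_g)$ for some smooth projective curve $X_g/\mathbb{F}_{q^r}$. Equating dimensions forces $g(X_g)=g$, and by Honda--Tate the isogenous abelian varieties share Frobenius eigenvalues. Writing the Frobenius eigenvalues of $A_g/\mathbb{F}_q$ as $\alpha_j=\sqrt{q}\,e^{2\pi i\theta_j}$ and $\bar\alpha_j$ for $1\le j\le g$, the eigenvalues of the $\mathbb{F}_{q^r}$-Frobenius of $\mathrm{Jac}(X_g)$ are then $\{\alpha_j^r,\bar\alpha_j^r\}=\{q^{r/2}e^{\pm 2\pi i r\theta_j}\}_{j=1}^g$.

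Weil's point-count formula applied to $X_g$ (of genus $g$ over $\mathbb{F}_{q^r}$) yields
\[
\#X_g(\mathbb{F}_{q^r}) \;=\; q^r + 1 - 2q^{r/2}\sum_{j=1}^g\cos(2\pi r\theta_j) \;\geq\; 0,
\]
so that $\tfrac{1}{g}\sum_{j=1}^g\cos(2\pi r\theta_j)\le \tfrac{q^r+1}{2 g q^{r/2}}=o(1)$ as $g\to\infty$. On the other hand, the hypothesis supplies the weak-$*$ convergence of empirical measures $\nu_g:=\tfrac{1}{2g}\sum_{j=1}^g(\delta_{\alpha_j}+\delta_{\bar\alpha_j})\to\mu_q$ on the compact circle $S_{\sqrt{q}}$. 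Testing against the continuous function $f(z)=z^r+\bar z^r=2q^{r/2}\cos(2\pi r\theta)$ gives
\[
\int f\, d\nu_g \;=\; \frac{2q^{r/2}}{g}\sum_{j=1}^g\cos(2\pi r\theta_j)\;\longrightarrow\; \int f\, d\mu_q\;=\;\frac{q^{r/2} c}{r^2},
\]
where the right-hand integral is computed by orthogonality of cosines against the density $1+\sum_{k\ge 1}\tfrac{c}{k^2}\cos(2\pi k\theta)$ defining $\mu_q$. Therefore $\tfrac{1}{g}\sum_{j=1}^g\cos(2\pi r\theta_j)\to\tfrac{c}{2r^2}>0$, which eventually violates the $o(1)$ upper bound forced by $\#X_g(\mathbb{F}_{q^r})\ge 0$. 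Hence for all $g\ge N$ with some $N=N(q,r)$, no such curve $X_g$ can exist.

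The argument has no genuine obstacle beyond bookkeeping: the positivity $c=6/(10\pi^2)>0$ built into $\mu_q$ in Proposition~\ref{arithme} is exactly what renders the limit $-q^{r/2}c/r^2$ strictly negative, and Honda--Tate applied over $\mathbb{F}_{q^r}$ is the standard input for transferring the $A_g$-eigenvalue data to $X_g$. The only point requiring mild care is clarifying that the isogeny occurs over a field containing $\mathbb{F}_{q^r}$, so that Frobenius eigenvalues are genuinely identified; once this is in place the conclusion reduces to a single first-moment computation testing the weak-$*$ limit against $f(z)=z^r+\bar z^r$.
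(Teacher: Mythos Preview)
Your proof is correct and follows essentially the same route as the paper: argue by contradiction, use Weil's formula to bound $\tfrac{1}{g}\sum_j\cos(2\pi r\theta_j)$ from above by $o(1)$, then compute the limit via weak-$*$ convergence against $\cos(2\pi r\theta)$ to get the strictly positive value $\tfrac{c}{2r^2}$. You are slightly more explicit than the paper about the base change to $\mathbb{F}_{q^r}$ and the Honda--Tate identification of eigenvalues, but the argument is the same moment computation.
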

\begin{proof}
    Suppose to the contrary that there exists a sub-sequence $\{ A_{g_i}\}$ of abelian varieties over $\mathbb{F}_q$ such that their Frobenius eigenvalues equidistribute with  $\mu_q$ and also they are isogenous to Jacobian of curves $\{ X_{g_i}\}$ over $\mathbb{F}_{q^r}$ for some $r\geq 0.$ By~\eqref{positivity}, it follows that
    \[
    0 \leq \frac{N_r(X_{g_i})}{g_i}= \frac{q^r+1-2q^{r/2}\sum_{j=1}^{g_i}\cos(r2\pi \theta_j)}{g_i}.
    \]
    By taking the limit of the above as $g_i\to \infty$, we obtain
    \[
    0 \leq -2q^{r/2} \lim_{g_i\to \infty}\frac{\sum_{j=1}^{g_i}\cos(r2\pi \theta_j)}{g_i}=-2q^{r/2} \int \cos(r2\pi \theta) d\mu_q= -q^{r/2}\frac{c}{r^2} <0,
    \]
    which is a contradiction. This completes the proof of our theorem.
\end{proof}

As was mentioned in the introduction, we constructed a polynomial $p(x)$ of degree 290 using the algorithm described in section \ref{main} of the paper. Like the other results in section \ref{applications}, this was implemented with the LLL-algorithm and without using the construction that forces the roots to lie inside a desired support.
The highest order terms were
$$p(x)=x^{290} - 28x^{289} - {484}x^{288} + 20784x^{287} + \cdots.$$
The largest coefficient of this polynomial has 105 digits. The roots all lie inside $[-2\sqrt{3},2\sqrt{3}]\subset\mathbb R$ and so by Honda-Tate theory, it corresponds with an abelian variety over $\mathbb F_3$ with characteristic polynomial $x^{290r}p(x+3/x)^r$ for some exponent $r.$ We checked that the corresponding abelian variety is  ordinary by computing the constant term of $p(x)$ which is coprime to 3. 
Indeed, it is 
$$19875183005444505756032705281200952405059895953380115355156299268417070$$
which is not divisible by 3. It follows from a theorem of Serre~\cite[Theorem~3.1.2]{serre_curves} that in the ordinary case $r=1$; see Example a) on page 24 of~\cite{serre_curves}.\footnote{The authors wish to thank Professor Serre for a fruitful discussion that brought this to the authors' attention as well as assisting the authors in formulating a correct version of Corollary \ref{frob_equidistribution} and Theorem \ref{eventually_not_jacobian}.}
Computing $N_2$ of this variety, we get $-2$. Thus it is not the Jacobian of any curve over $\mathbb F_9$.

\bibliographystyle{alpha}
\bibliography{main}
\end{document}